\newcommand{\mc}{\mathcal}
\newcommand{\mb}{\mathbf}
\newcommand{\R}{\mathbb{R}}
\newcommand{\E}{\mathbb{E}}
\newcommand{\N}{\mathbb{N}}
\newcommand{\var}{\mathsf{Var}}
\newcommand{\domain}{\mathsf{dom}}
\newcommand{\diag}{\mathsf{diag}}
\newcommand{\agent}{\mathsf{Agent}}
\newcommand{\sign}{\mathsf{sign}}
\newcommand{\setspan}{\mathsf{Span}}
\DeclareMathOperator*{\argmin}{argmin}
\newcommand{\child}{\mathsf{Chi}}
\newcommand{\Descendant}{\mathsf{Des}}
\algrenewcommand\algorithmicrequire{\textbf{Input:}}
\algrenewcommand\algorithmicensure{\textbf{Output:}}
\newtheorem{theorem}{Theorem}[section]
\newtheorem{lemma}{Lemma}[section]
\newtheorem{definition}{Definition}[section]
\newtheorem{corollary}{Corollary}[section]
\newtheorem{assumption}{Assumption}[section]
\title{Multi-Timescale Primal Dual Hybrid Gradient with Application to Distributed Optimization}
\author[1]{Junhui Zhang}
\author[1,2]{Patrick Jaillet}
\affil[1]{Operations Research Center, MIT}
\affil[2]{Department of Electrical Engineering and Computer Science, MIT}
\date{}
\begin{document}
\maketitle
\begin{abstract}
We propose two variants of the Primal Dual Hybrid Gradient (PDHG) algorithm for saddle point problems with block decomposable duals, hereafter called Multi-Timescale PDHG (MT-PDHG) and its accelerated variant (AMT-PDHG). Through novel mixtures of Bregman divergence and multi-timescale extrapolations, our MT-PDHG and AMT-PDHG converge under arbitrary updating rates for different dual blocks while remaining fully deterministic and robust to extreme delays in dual updates. 

We further apply our (A)MT-PDHG, augmented with the gradient sliding techniques introduced in \cite{lan_communication-efficient_2020,lan_gradient_2016}, to distributed optimization. The flexibility in choosing different updating rates for different blocks allows a more refined control over the communication rounds between different pairs of agents, thereby improving the efficiencies in settings with heterogeneity in local objectives and communication costs. Moreover, with careful choices of penalty levels, our algorithms show linear and thus optimal dependency on function similarities, a measure of how similar the gradients of local objectives are. This provides a positive answer to the open question whether such dependency is achievable for non-smooth objectives \cite{ArjevaniShamir2015Communication}.
\end{abstract}
\section{Introduction}

We study the following saddle point problem with block decomposable dual variables and objectives:
\begin{equation}\label{eq:objective_pd_block}\tag{$\mc P_b$}
    \min_{X\in \mc X} \max_{y_s\in \R^{n_s},~s\in [S]} F(X)+\sum_{s=1}^S\left( \langle K_sX,y_s\rangle  -  R_s^*(y_s)\right),
\end{equation}
where $\mc X\subset \R^{d}$ is a non-empty convex set and $F:\mc X\to \R$ is a convex function such that there exists $\mu,M\geq 0$, 
\begin{equation}\label{eq:prop-F}
    \frac{\mu}{2}\|X-X'\|^2\leq F(X) - F(X')-\langle F' (X'),X-X')\rangle\leq M\|X-X'\|,\quad\forall X,X'\in \mc X,
\end{equation}
where $F':\mc X\to \R^{d}$ is a subgradient oracle, i.e. for each $X'\in \mc X$, $F'(X')\in \partial F(X')$ is a subgradient. For instance, when $\|F'\|_*\leq M'$, then $M = 2M'$ holds.\footnote{This is because $F(X') \geq  F(X)+ \langle F'(X), X'-X\rangle$, and so using Cauchy–Schwarz inequality 
\begin{displaymath}
    F(X) - F(X') - \langle F'(X'), X-X'\rangle \leq \langle F'(X) - F'(X'), X-X'\rangle \leq 2M \|X-X'\|.
\end{displaymath}} For each $s\in [S]$, $K_s\in \R^{n_s\times d}$ is a matrix, $R_s:\R^{n_s}\to \overline{\R}$ is a proper, convex, and lower-semicontinuous function, and $R_s^*:\R^{n_s}\to \overline{\R}$ is its Fenchel conjugate, defined as $R_s^*(y_s) = \sup_{y_s'\in \R^{n_s}} \langle y_s,y_s'\rangle -R_s(y'_s)$. 

Saddle point problems of the form \eqref{eq:objective_pd_block} with one block ($S=1$) have been widely studied due to its applications in various problems including linear programming \cite{applegate_faster_2023}, distributed optimization \cite{lan_communication-efficient_2020}, and inverse problems such as image denoising \cite{chambolle_ergodic_2016,chambolle_first-order_2011}, to name a few. For $S>1$, stochastic algorithms which update random subsets of dual blocks have been proposed \cite{Chambolle2018Stochastic,ShaiTong2013}. 

In this work, building upon the Primal Dual Hybrid Gradient (PDHG) algorithms\cite{chambolle_ergodic_2016,chambolle_first-order_2011}, we propose Multi-Timescale Primal Dual Hybrid Gradient (MT-PDHG) and its accelerated variant (AMT-PDHG) where dual blocks are updated periodically at potentially 
different rates. Our algorithms achieve the following three properties.
\begin{enumerate}
    \item \underline{\textit{Flexible}}. (A)MT-PDHG converge under arbitrary updating rates for the dual blocks. 
    \item \underline{\textit{Deterministic}}. The updating schedules for the dual blocks are deterministic and periodic. 
    \item \underline{\textit{Robust}}. The convergence rates depend on the average, instead of the maximum, of the updating rates, thereby are robust to extreme delays (i.e. large updating rates). 
\end{enumerate}

 The motivation behind this work is distributed optimization, a branch of optimization where multiple agents, each having access to only partial information about the (global) objective, work together to solve the global problem. As an example, in distributed empirical risk minimization for machine learning, the global objective function is the sum of local loss functions, each depending on the local dataset which is only available to one agent \cite{AssranAytekinFeyzmahdavianJohanssonRabbat2020,Boyd2011,Zhu2016,ArjevaniShamir2015Communication,Jakub2016}. Examples of other applications include power system control \cite{Molzahn2017,Nedic2018}, multi-robot system control \cite{Bullo2009,Nedic2018,Jaleel2020,Shorinwa2024}, and signal processing \cite{BertsekasTsitsiklis1989,Dan2002,Rabbat2004}. More precisely, we study the following distributed optimization problem
\begin{equation}\label{eq:objective_f}\tag{$\mc P_{d}$}
    \min_{x\in \overline{\mc X}} \sum_{v\in V} f_v(x)
\end{equation}
where $V = [m]$ represents $m$ agents, $\overline{\mc X}\subset \R^{\overline{d}}$ is a nonempty, closed convex set, and $x^*\in \overline{\mc X}$ is an optimal solution to \eqref{eq:objective_f}. 

\textbf{From \eqref{eq:objective_pd_block} to lifted space reformulation of \eqref{eq:objective_f}.} We consider a lifted space reformulation of \eqref{eq:objective_f} -- \eqref{eq:objective_F} in Section \ref{sec:do-setup} -- where agent $v$ maintains and updates $x_v$, a local version of the decision variable $x$. The objective then becomes $\sum_{v\in V} f_v(x_v)$, with additional consensus constraints encouraging $x_v\approx x_{v'}$ for all $v,v'\in V$. The classical PDHG algorithm, when implemented in the distributed setting, requires  communication only when dual variables are updated.

\textbf{From \eqref{eq:objective_f} and \eqref{eq:objective_F} to algorithm design goals for \eqref{eq:objective_pd_block}.} We target at distributed optimization under: (1) heterogeneity in local objectives and communication costs, and (2) communication bottlenecks with periodic connectivity constraints.

As an example of heterogeneity, in distributed empirical risk minimization, local loss functions are different due to randomness in the local datasets and variation of data distributions \cite{Zhu2016,Kovalev2022,pmlr-v32-shamir14,pmlr-v37-zhangb15,pmlr-v151-tian22b,ArjevaniShamir2015Communication}; the costs of communication could depend on factors such as the distance between agents, methods of communication, and amounts of data sent \cite{Berahas2019,BertsekasTsitsiklis1989,Scaman2019,Tyurin2024,Santosh2020}. The heterogeneity makes it desirable to design algorithms allowing more \underline{\textit{flexible}} control over the numbers of communication rounds among different subsets of agents, which translates to the numbers of updates applied to each dual blocks. Existing block coordinate descent algorithms allow some level of control on this by using different selection probabilities for different blocks. However, stochastic algorithms could be impractical or inefficient due to factors such as unpredictability, random memory access \cite{Sun2017}, sampling overhead \cite{Csiba2018}, and physical constraints on connectivity between agents \cite{Angelia2015}. 

We consider a ``rate based design'', where communication is scheduled at different rates between different pairs of agents, \underline{\textit{deterministically}}. In practice, the rates could be either picked by users and/or subject to physical, bandwidth, and/or energy constraints \cite{Zeng2021}. This brings in one challenge: how to ensure the convergence of the algorithms under periodic communication at different rates. Another challenge, and also our third desideratum, is to ensure that the performance of the algorithms should not be influenced by stragglers (dual blocks which are updated infrequently), meaning that the algorithms should be \underline{\textit{robust}} to extreme values in the updating rates.

\textbf{Results.} Our (A)MT-PDHG achieves the above three goals. To ensure convergence, we use careful mixtures of Bregman divergence and novel multi-timescale extrapolation. For the distributed optimization problem considered, to find an $\epsilon$-suboptimal solution, the complexities of our algorithms achieve optimal dependency on $\epsilon$: MT-PDHG needs $O(\overline{r}A/\epsilon)$ communication rounds and $O(\overline{r}/\epsilon^2)$ subgradient steps for Lipchitz objectives, and AMT-PDHG needs $O(\overline{r}A/\sqrt{\epsilon\mu})$ communication rounds and $O(\overline{r}/(\epsilon\mu))$ subgradient steps if the objectives are also $\mu$-strongly convex. Here, $\overline{r}$ measures the ``average rate of updates'' for dual blocks, and $A$ measures similarities between (subgradients of) local functions. In fact, the linear dependency of communication rounds on $A$ is optimal \cite{ArjevaniShamir2015Communication}, thereby providing a positive answer to the open question whether such dependency is achievable for non-smooth objectives \cite{ArjevaniShamir2015Communication}.

Numerical experiments for linear programming and support vector machine problem with regularized hinge losses confirm the effectiveness of our algorithms and demonstrate the above dependence on $\overline{r}$ and $A$.

\subsection{Related works}

\textbf{Primal-Dual Hybrid Gradient and its block variant.} Our algorithms are built upon the Primal-Dual Hybrid Gradient (PDHG) algorithms \cite{chambolle_first-order_2011,chambolle_ergodic_2016}. For problems with block-decomposable duals, block-coordinate descent type of variants have been proposed, such as the Stochastic Primal-Dual Coordinate (SPDC) \cite{pmlr-v37-zhanga15} and the Stochastic-PDHG (S-PDHG)\cite{Alacaoglu2022,Chambolle2018Stochastic}, to name a few. These algorithms update \textit{random} subsets of the dual blocks at each iteration, where all blocks have strictly positive probability of being selected. Although the $O(1/k)$ rate of convergence still holds,  due to the randomness, the convergence is only shown for the \textit{expected} objective value suboptimality (for SPDC) or \textit{expected} duality gap (for S-PDHG), and could have unsatisfactory performance due to large variance. Moreover, updating random subsets could potentially be inefficient -- due to reasons such as random memory access \cite{Sun2017} and potential overhead in computing sampling distributions \cite{Csiba2018} -- or even impossible due to physical constraints -- such as in distributed settings, blocks available for updates are subject to  network connectivity and communication constraints \cite{Angelia2015}.

Although deterministic block coordinate descent for convex optimization has been shown to converge, such as under the cyclic updating rule \cite{Sun2017,wright_coordinate_2015}, to the best of our knowledge, the \textit{multi-timescale} updating rule we propose is the first \textit{deterministic} block updating rule for PDHG with separable duals, such that different blocks could be updated different numbers of times, and the duality gap converges deterministically at the optimal rate.

As the dual blocks are not updated at each global iteration, they introduce ''outdated'' information to the dynamics. Most existing asynchronous optimization algorithms achieve convergence rates which depend on the \textit{maximum delay} \cite{NiuRechtReWright2011,Liu2015Asynchronous,Peng2016ARock}, with the exception of \cite{Cohen2021,Attia2025}, whose convergence rates depend on the average and the quantiles of delays, respectively. However, \cite{Cohen2021,Attia2025} are designed for \textit{convex optimization}. As a comparison, our (A)MT-PDHG are designed for \textit{saddle point problems} and have convergence rates which depend on the \textit{averages instead of maximum} of the updating rates, making them robust against extreme delays in the dual updates.

\textbf{Non-smooth distributed optimization.} Since the seminal works \cite{BertsekasTsitsiklis1989,TsitsiklisBertsekasAthans1986}, numerous algorithms have been proposed for non-smooth distributed optimization under various settings, and we refer readers to surveys such as \cite{AssranAytekinFeyzmahdavianJohanssonRabbat2020,NedicOlshevskyRabbat2018,konecny2016federatedoptimizationdistributedmachine}. For the function class of Lipschitz, non-smooth, convex objectives, most of these algorithms fall into the following two categories: subgradient based and dual based \cite{lan_communication-efficient_2020}. Subgradient based algorithms such as the incremental gradient method \cite{bertsekas_incremental_2011}, decentralized subgradient method \cite{NedicOzdaglar2009}, and the dual averaging \cite{Duchi2012DA} usually require $O(1/\epsilon^2)$ rounds of communication, each followed by one gradient step. Within the function class, this achieves the optimal subgradient oracle complexity, but is suboptimal with respect to the communication rounds: as proven by \cite{ArjevaniShamir2015Communication,Scaman2019}, the communication rounds needed is $O(1/\epsilon)$. 

As a comparison, dual based algorithms, which dualize the consensus constraints, usually have better communication complexity: $O(1/\epsilon)$ rounds are needed for distributed ADMM \cite{Aybat2018,wei2013o1kconvergenceasynchronousdistributed} and the decentralized communication sliding (DCS) \cite{lan_communication-efficient_2020}, as examples. However, each round of communication is followed by optimization of Lagrangians or proximal updates, performed locally by each agent. To make the overall algorithm first-order, \cite{lan_communication-efficient_2020} proposes the Communication Sliding (CS) procedure, which approximates the proximal updates through $O(1/\epsilon)$ steps of (local) mirror descent, thereby achieving the $O(1/\epsilon^2)$ subgradient oracle complexity. The CS procedure has roots in  the gradient sliding technique \cite{lan_gradient_2016}, which can save gradient computation for the smooth component when the objective involves a smooth and a non-smooth component.

For the class of strongly convex objectives, DCS can be accelerated, needing $O(1/\sqrt{\epsilon})$ rounds of communication and $O(1/\epsilon)$ gradient steps in total, both achieving the theoretical optimal \cite{lan_communication-efficient_2020}. In this work, due to the different time scales, we generalize the CS procedure for problems involving a mixture of Bregman divergences. In addition, we point out that for problems with \textit{smooth} objectives, Local SGD -- which applies gradient steps locally but communicates only once in a while -- has been studied under various settings \cite{Zinkevich2010,stich2019local,Woodworth2020}.

\textbf{Lower bounds on communication.} In \cite{ArjevaniShamir2015Communication}, it is shown that for distributed convex optimization, $O(1/\epsilon)$ rounds of communication are needed for $1$-Lipschitz objectives, and $O(1/\sqrt{\mu\epsilon})$ rounds are needed when the objectives are also $\mu$-strongly convex. These lower bounds are achieved by splitting a ``chain like'' objective into two, each given to one agent. \cite{Scaman2019} extends these results to a decentralized, network setting and shows the dependence of the lower bounds on the network diameter and communication delay. \cite{Tyurin2024} provides lower bound and (nearly) optimal algorithm for a different setup, where distributed agents have stochastic first order oracles to the same smooth nonconvex objective, but computation and communication speeds are bounded and different for different edges and agents. Apart from the round complexity, \cite{Santosh2020} shows a dimension-dependent lower bound on the bit-complexity of communication.  

In addition, motivated by distributed training in machine learning, communication lower and upper bounds have been established using function similarities \cite{ArjevaniShamir2015Communication,pmlr-v32-shamir14,Zhu2016,pmlr-v151-tian22b,Kovalev2022,pmlr-v119-karimireddy20a,pmlr-v119-koloskova20a}: for instance, in (distributed) empirical risk minimization, the local loss functions have the same functional form but use different subsets of data, thereby inheriting the similarity in data. In \cite{ArjevaniShamir2015Communication}, function similarities are measured using norms of the differences in (sub)gradients (and Hessians if exist), and a communication round lower bound linear in this measure is shown for convex Lipschitz objectives and strongly convex objectives. Known algorithms that take advantage of function similarities usually require additional assumptions such as strong convexity and smoothness \cite{Zhu2016,pmlr-v32-shamir14,pmlr-v37-zhangb15,pmlr-v151-tian22b,Kovalev2022,pmlr-v119-karimireddy20a,pmlr-v119-koloskova20a}. As pointed out in \cite{ArjevaniShamir2015Communication}, there is no known algorithm which achieve these communication round lower bounds for non-smooth convex objectives. In this work, we formalize the notion of function similarity for non-smooth convex objectives (Definition \ref{def:func_simi}), and show that the communication round complexity for our (A)MT-PDHG indeed achieve these lower bounds, thereby answering \cite{ArjevaniShamir2015Communication}'s open question positively. 

\subsection{Contributions}
We propose multi-timescale PDHG for saddle point problems with block-decomposable duals, where different dual blocks are updated at different rates. 
\begin{itemize}
    \item To ensure convergence with arbitrary updating rates, we propose novel multi-timescale extrapolation steps for the dual updates \eqref{eq:novel-dual} and mixtures of Bregman divergence for the primal updates \eqref{eq:primal-md-obj}. The duality gaps of our algorithms converge at the optimal rates: $O(1/k)$ and $O(1/k^2)$ for general and strongly convex objectives, respectively, despite the potentially outdated information in the dual. 
    \item To the best of our knowledge, for the saddle point problems considered, our multi-timescale PDHG algorithms are the first \textit{deterministic} updating mechanisms which still \textit{allow different (arbitrary) updating rates for different dual blocks}.
    \item We quantify how updating rates influence convergence: the duality gaps show linear dependencies on \textit{weighted averages} of the updating rates (Corollary \ref{cor:main}) and its square (Corollary \ref{cor:main_acc}) for general and strongly convex objectives, respectively. Thus, our algorithms are \textit{robust to the maximum updating rates}.\footnote{We call the inverses of frequencies the rates. Thus, the largest updating rate corresponds to the smallest frequency.}
\end{itemize}

We apply our algorithms to convex non-smooth distributed optimization. More specifically, we propose relaxing the consensus constraints via \textit{generic convex, block-decomposable penalty functions} (Lemma \ref{lm:delta-subopt} and Corollary \ref{cor:delta_subopt}), which generalize the \textit{characteristic-function penalty} used in prior work. This yields saddle-point formulations \eqref{eq:objective_F} with block-decomposable dual variables, enabling our multi-timescale PDHG method, which is particularly well suited to settings with heterogeneous communication costs among agents. 

Moreover, we show that with proper choices of the penalties, the communication round complexities of our algorithms have linear, and thus
\textit{optimal}, dependency on similarities between gradients of the local objectives. This provides positive answers to the open question whether the theoretical communication round lower bounds proposed in \cite{ArjevaniShamir2015Communication} can be attained.

\subsection{Roadmap}

In Section \ref{sec:setup}, we present additional details and assumptions about the saddle point problem \eqref{eq:objective_pd_block}. Then, in Section \ref{sec:update-proc}, we propose our multi-timescale PDHG for \eqref{eq:objective_pd_block}, and in Sections \ref{sec:convergence-convex} and \ref{sec:convergence-acc}, we present the convergence properties of our algorithms with and without strong convexity, respectively.

In Section \ref{sec:app-do}, we apply our multi-timescale PDHG to distributed optimization problems: we first show that \eqref{eq:objective_f} can be reformulated in the lifted space in the form of \eqref{eq:objective_pd_block} (Section \ref{sec:do-setup}) and propose a set of conditions on the consensus constraints (Section \ref{sec:agents}) and penalties (Section \ref{sec:two_measures}). The requirements on the penalties motivate the definition of function similarities (Definition \ref{def:func_simi}). Then, we describe how multi-timescale PDHG can be applied and the communication involved (Section \ref{sec:do-pdhg}), and provide the convergence results in Sections \ref{sec:do-convergence} and \ref{sec:do-convergence-acc}.

Numerical experiments are provided in Section \ref{sec:exp}.

\section{Setup}\label{sec:setup}

For the problem \eqref{eq:objective_pd_block}, we assume $\R^d$ is equipped with a norm $\|\cdot\|$ not necessarily generated by the Euclidean inner product, and we equip $\mc X$ with a distance generating function\footnote{
For a convex closed set $\mc S$, a function $w:\mc S\to \R$ is a distance generating function \cite{Ghadimi2012} with modulus $\nu>0$ w.r.t. $\|\cdot\|$ if $w$ is continuously differentiable and 
\begin{displaymath}
    \langle x-z,\nabla w(x) -\nabla w(z)\rangle \geq \nu \|x-z\|^2,\quad \forall x,z\in \mc S.
\end{displaymath}
 The Bregman divergence generated by $w$ is defined as $D_w(x,z):=w(x) - w(z) - \langle \nabla w(z),x-z\rangle$. 
} $w_X:\mc X\to \R$ with modulus $1$.

We further denote $\mc Y_s:=\domain(R_s^*) = \{y_s\in \R^{n_s},~R_s^*(y_s)<\infty\}$ as the domain of $R_s^*$, and define $R:\R^n\to \overline{\R}$ as $R(Y) = \sum_{s=1}^S R_s(y_s)$. It's easy to see that $R^*(Y) = \sum_{s=1}^S R_s^*(y_s)$ and $\mc Y:=\domain(R^*) = \prod_{s=1}^S \domain(R_s^*)$. Similarly, for each $s\in [S]$, we assume that $\R^{n_s}$ is equipped with a norm $\|\cdot\|$ not necessarily generated by the Euclidean inner product, and we equip $\domain(R_s^*)$ with distance generating function $w_{y_s}:\domain(R^*_s)\to \R$ with modulus $1$, and $w_Y(Y): = \sum_{s=1}^S w_{y_s}(y_s)$. 

For convenience, we define $K:\R^d\to \R^n$ as $(KX)_s = K_sX$ for each $s\in [S]$. Thus, \eqref{eq:objective_pd_block} can be compactly written as
\begin{equation}\label{eq:compact}\tag{$\mc P_c$}
    \min_{X\in \mc X} \max_{Y\in \R^n} F(X) +\langle KX,Y\rangle  - R^*(Y).
\end{equation}
In addition, since $R_s$ is convex and lower-semicontinuous, $R_s = R_s^{**}$ (Theorem 11.1 \cite{rockafellarVariational}), and importantly, 
\begin{equation*} 
    R_s(K_sX) = \sup_{y_s\in \R^{n_s}} \langle K_sX,y_s\rangle - R_s^*(y_s).
\end{equation*}
Thus, \eqref{eq:objective_pd_block} is also equivalent to the following primal-only formulation:
\begin{equation}\label{eq:objective_pd_block-primal}\tag{$\mc P_{p}$}
        \min_{X\in {\mc X}}F(X) + \sum_{s=1}^S R_s(K_sX).
\end{equation}

In the rest of the work, when there is no confusion on the domain of the function, we abbreviate $w_{X},w_Y, w_{y_s}$ as $w$ and the associated Bregman divergence as $D$. We also make the following assumption. 
\begin{assumption}\label{assumption:bregman_easy}
For any $\overline{y}_s\in \domain(R^*_s)$, $g\in \R^{n_s}$ the following problem can be solved exactly:
\begin{displaymath}
    \min_{y_s\in \domain(R^*_s)} R_s^*(y_s) +\langle g,y_s\rangle+ D(y_s,\overline{y}_s).
\end{displaymath}
For any $g\in \R^d$, the following problem can be solved exactly:
\begin{displaymath}
    \min_{X\in \mc X} \langle g,X\rangle + w(X).
\end{displaymath}
\end{assumption}

\textbf{Performance measure.} We use the duality gap to measure the performance of $Z = (X,Y)$. More precisely, we define $G:\mc Z\times \mc Z\to\overline{\R}$ where $\mc Z = \mc X\times \R^n$ as
    \begin{equation}\label{eq:def_gap}
        G(X,Y;X',Y'):=  \langle KX,Y'\rangle + F(X) - R^*(Y')- \left\{\langle KX',Y\rangle + F(X') - R^*(Y)\right\},
\end{equation}
and we will provide upper bounds on $G(Z):=\sup_{Z'\in \mc Z} G(Z;Z')$ where $Z$ is the output of our algorithms.

\section{Multi-timescale primal-dual updates}\label{sec:mupdhg}
To solve the saddle point problem \eqref{eq:compact}, one of the most popular algorithms is the Primal-Dual Hybrid Gradient (PDHG) algorithm\cite{chambolle_first-order_2011,lan_communication-efficient_2020,Chambolle2018Stochastic}, which updates the primal variable $X$ and the dual variable $Y$ iteratively. At iteration $k=0,1,\ldots$,
\begin{subequations}\label{eq:pdhg}
\begin{align}
    \widetilde{X}^{k} & = X^{k-1} + \alpha_k(X^{k-1}-X^{k-2})\label{eq:pdhg-x-extra}\\
    Y^{k} &=\argmin_{Y\in \R^n} R^*(Y)+ \langle -K\widetilde{X}^{k},Y\rangle + \tau_k D(Y,Y^{k-1})\label{eq:pdhg-y}\\
    X^{k} &= \argmin_{X\in \mc X} F(X)+
    \langle K^*Y^k,X\rangle + \eta_k D(X,X^{k-1}),\label{eq:pdhg-x}
\end{align}
\end{subequations}
where $\tau_k,\eta_k>0$ are parameters depending on the operator norm of $K$, and $\alpha_k\in [0,1]$. The convergence rates of PDHG under various assumptions of the objective functions $F$ and $R$ have been well established \cite{chambolle_first-order_2011}. For instance, for general convex $F$ and $R$, the duality gap (of the ergodic mean) converges at the rate of $O(1/k)$; further, if $F$ is strongly convex, the accelerated rate of $O(1/k^2)$ can be achieved. 

\textbf{Approximation to \eqref{eq:pdhg-x} through gradient sliding and auxiliary primal sequence.} For a generic (convex, potentially non-linear) objective function $F$, it's sometimes unreasonable to assume that one can find the exact minimizer in the update \eqref{eq:pdhg-x}. Indeed, in Assumption \ref{assumption:bregman_easy}, we only assume that $\langle g,X\rangle + w(X)$ can be exactly minimized. Thus, \cite{lan_gradient_2016,lan_communication-efficient_2020} propose and use gradient sliding techniques, which approximate the minimizer through multiple iterations of mirror descent. In addition, to ensure the convergence of the overall PDHG algorithm with inexact updates, an auxiliary sequence $\widehat{X}^k$ is constructed, and the right hand side of \eqref{eq:pdhg-x-extra} is replaced with $X^{k-1} + \alpha_k(\widehat{X}^{k-1}-X^{k-2})$.

\textbf{Block-decomposable dual updates.} In \eqref{eq:objective_pd_block}, both $R^*(Y) = \sum_{s=1}^S R^*_s(y_s)$ and $D(Y,Y^{k-1}) = \sum_{s=1}^S D(y_s,y_s^{k-1})$ are block-decomposable. Thus, the dual update \eqref{eq:pdhg-y} is also block-decomposable: 
\begin{equation}\label{eq:pdhg-dual-block}
    y_s^{k} =\argmin_{y\in \R^{n_s}} R_s^*(y_s)+ \langle -K_s\widetilde{X}^{k},y_s\rangle + \tau_k D(y_s,y_s^{k-1}),\quad s=1,\ldots,S.
\end{equation}
This makes block-coordinate descent type of algorithms possible. As an example, S-PDHG \cite{Chambolle2018Stochastic} updates a random subset of the dual blocks at each iteration. The flexibility in choosing the sampling distribution allows one to control the frequency of updates of different blocks. However, due to the randomness, the $O(1/k)$ rate of convergence is shown only for the expected duality gap. 

To maintain the \textit{deterministic convergence guarantee} as well as the \textit{flexibility in choosing the number of updates applied to each dual block}, we propose a multi-timescale updating mechanism for \eqref{eq:objective_pd_block}, where different dual blocks are updated at potentially different rates. More precisely, denoting the global time using $k=0,1,\ldots$, then the dual block $y_s$ is updated only at iteration $k=0,r_s,2r_s,\ldots$ for some positive integer $r_s$ and remains fixed for all other iterations. Due to this multi-timescale mechanism, two challenges arise. 
\begin{enumerate}
    \item \underline{Information delay in $y_s^k$.} In the primal update \eqref{eq:pdhg-x-extra}, the term $K^*Y^k = \sum_{s=1}^S K^*_sy_s^k$ depends on $y_s^k$, yet $y_s^{k} = y_s^{\lfloor k/r_s\rfloor\times r_s}$ contains information only till iteration $\lfloor k/r_s\rfloor\times r_s\leq k$, which could be ``outdated''. To mitigate the negative effect of ``information delay'', we propose using \underline{\textit{mixtures of Bregman divergences}} in \eqref{eq:pdhg-x} to control how fast the primal sequence varies (\eqref{eq:primal-md-obj}). 
    \item \underline{Multi-timescale information aggregation of $X^k$.} The dual updates \eqref{eq:pdhg-y} and \eqref{eq:pdhg-dual-block} are defined for each global time $k$, and $\widetilde{X}^k$ depends on $X^{k-2}$ and $X^{k-1}$ only. In the multi-timescale setting, one can expect that $\widetilde{X}^k$ should depend on primal sequences over longer intervals (the length of which could depend on $r_s$), and could be different for different dual blocks. We propose \underline{\textit{multi-timescale extrapolation}} which aggregates $X^{(k-2r_s):(k-1)}$ for block $s$ (\eqref{eq:novel-dual}).  
\end{enumerate}

In the rest of this section, we state the exact updating procedure in Section \ref{sec:update-proc}, and provide convergence results in Sections \ref{sec:convergence-convex} and \ref{sec:convergence-acc} for general convex and strongly convex $F$ respectively.

\subsection{Multi-timescale updating procedure}\label{sec:update-proc}

In our multi-timescale updating procedure, we use $k=0,1,\ldots,N$ to denote the global time. Motivated by gradient sliding procedures in \cite{lan_communication-efficient_2020}, we also allow approximate minimizers to the primal updates, and keep track of the pair $(X^k, \widehat{X}^k)$, which is assumed to satisfy the condition \eqref{eq:primal-approx-prop}. This can be achieved using a generalized gradient sliding technique (Appendix \ref{sec:generalized-CS}).

 \textbf{Initialization.} We assume access to initializations $X^{init}\in \mc X$ and $y_s^{init}\in \mc Y_s$ for all $s\in [S]$, and we initialize $X^{k'} = \widehat{X}^{k'}=X^{init}$ and $y_s^{k'} = y_s^{init}$ for all $k'<0$. 

{\color{blue}\textbf{Dual updates.}} We associate each dual $y_s$ with a rate $r_s\in \N$ and a local time $i_s=0,1,\ldots,N_s-1$, such that $N+1 = r_sN_s$. For $k=0,1,\ldots,N$, $y_s$ remains dormant unless $k = r_si_s$ for some $i_s\in \{0,1,\ldots,N_s-1\}$, where $y^{i_s}_s$ is computed as follows (which replaces \eqref{eq:pdhg-x-extra} and \eqref{eq:pdhg-y}):
\begin{subequations}\label{eq:novel-dual}
    \begin{align}
        \widetilde{X}_{s}^{i_s} &=\alpha_{s,i_s}\left(\sum_{k'=r_si_s-r_s}^{r_si_s-1}\theta_{k'}(\widehat{X}^{k'} - X^{k'-r_s})\right) + \sum_{k'=r_si_s-r_s}^{r_si_s-1}\theta_{k'+r_s}X^{k'},\label{eq:update_x_tilde}\\
    y_s^{i_s} &= \argmin_{y_s\in \R^{n_s}} \langle - \frac{1}{\sum_{k'=r_si_s}^{r_si_s+r_s-1}\theta_{k'}}K_{s}\widetilde{X}_{s}^{i_s},y_s\rangle + R^*_s(y_s) + \tau_{s,i_s}D(y_s ,y^{i_s-1}_s).\label{eq:update_y}
    \end{align}
\end{subequations}

Further, we denote $\overline{y}_s^k=y_s^{\lfloor k/r_s\rfloor}$, i.e. value of the dual block $y_s$ at the global time $k$, and we abbreviate $\overline{Y}^k = (\overline{y}_s^k)_{s\in [S]}$.

{\color{red}\textbf{Primal updates.}} First, for each $k$, we define 
\begin{equation}\label{eq:primal-md-obj}
    \Phi^k(X):=F(X) + \langle K^* \overline{Y}^k,X\rangle + \sum_{s=1}^S \eta_{k,s}D(X,X^{k-r_s}).
\end{equation}
All primal variables are updated at each global time as \textit{approximate} minimizer to \eqref{eq:primal-md-obj}, such that there exists some $\delta_k:\mc X\to \R$ which could depend on $X^k,X^{k-1}$ and other algorithm parameters, the following is satisfied: 
\begin{equation}\label{eq:primal-approx-prop}
    \Phi^k(\widehat{X}^k) \leq \Phi^k(X) -(\frac{\mu}{C}+\sum_{s=1}^S\eta_{k,s})D(X,X^k)+\delta_k(X),\quad \forall X\in \mc X,
\end{equation}
where we assume \eqref{eq:prop-F} holds for some $\mu\geq 0$, and $D(X,X')\leq \frac{C}{2}\|X-X'\|^2$ for some $0<C\leq \infty$.

In case $\Phi^k$ can be minimized exactly, say $F(X) = \langle g_F,X\rangle$ for some $g_F\in \R^d$ is a linear function, we can take $\widehat{X}^k=X^k=\argmin_{X\in \mc X} \Phi^k(X)$ to be the exact minimizer, and then we can take $\delta_k(X) = 0$ (Lemma \ref{lm:generalized_lm5}). Nevertheless, in \eqref{eq:primal-approx-prop}, we also allow inexact minimizer. For generic convex objectives, this can be found through a generalization of the gradient sliding technique in \cite{lan_communication-efficient_2020}: we extend this technique from $S=1$ to $S\geq 1$ (see Appendix \ref{sec:generalized-CS} for more details). More concretely, following Corollary \ref{cor:lambda_beta}, the following results hold.
\begin{corollary}\label{cor:cs-prop-aux}
 Consider the following updates using $GS$, the generalized gradient sliding procedure in Algorithm \ref{alg:CS-procedure}, where $T_k\in \N$ and $\eta_k = \sum_{s=1}^S \eta_{k,s}>0$, where $\eta_{k,s}\geq 0$, 
\begin{equation}\label{eq:update-x-cs}
    (X^k,\widehat{X}^k) = GS(F,{\mc X},D,T_k,(\eta_{k,s})_{s\in [S]},(X^{k-r_s})_{s\in [S]},K^* \overline{Y}^k,X^{k-1}). 
\end{equation}

Assume that \eqref{eq:prop-F} holds with some $\mu\geq 0$, then with $\lambda_t = t+1$ and $\beta_t =\frac{t}{2}$ for $t\geq 1$, \eqref{eq:primal-approx-prop} holds with 
\begin{equation*}
    \delta_k(X) = \frac{2\eta_k}{T_k(T_k+3)}\left(D(X,X^{k-1})  
     - D(X,X^k) \right)+ \frac{4M^2}{\eta_k(T_k+3)}.
\end{equation*}

Further, if \eqref{eq:prop-F} holds with some $\mu> 0$, and $D(X,X')\leq \frac{C}{2}\|X-X'\|^2$ for some $C<\infty$, then with $\lambda_t = t$ and $\beta_t = \frac{(t+1)\mu}{2\eta_k C} +\frac{t-1}{2}$, \eqref{eq:primal-approx-prop} holds with 
\begin{equation*}
\delta_k(X) = \frac{2M^2/\eta_k}{T_k(T_k+1)}\sum_{t=1}^T\frac{\lambda_t}{\beta_t}.
\end{equation*}
\end{corollary}

\textbf{Final outputs.} Denoting $\widehat{Z}^k = (\widehat{X}^k,\overline{Y}^k)$, then the output is 
\begin{equation}\label{eq:ergodic}
    Z^N = (\sum_{k=0}^N\theta_k)^{-1}\sum_{k=0}^N \theta_k\widehat{Z}^k.
\end{equation}

\begin{figure}[h!]
\centering
\resizebox{.8\linewidth}{!}{
\begin{tikzpicture}[x=1cm,y=0.55cm,>=stealth,thick]

\definecolor{myblue}{RGB}{0,120,160}
\definecolor{myred}{RGB}{220,0,0}
\definecolor{myorange}{RGB}{240,140,60}
\definecolor{myyellow}{RGB}{255, 221, 54}
\definecolor{mygreen}{RGB}{80,170,60}

\draw[myblue, line width=1.2pt, ->] (0,3.6) -- (10,3.6);
\draw[myblue, line width=1.2pt, ->] (0,2.4) -- (10,2.4);
\draw[myblue, line width=1.2pt, ->] (0,1.2) -- (10,1.2);
\draw[myblue, line width=1.2pt, ->] (0,0) -- (10,0);

\node[right] at (10.5,3.6) {$(X^k,\widehat{X}^k),~k=0,\ldots,17$};
\node[right] at (10.5,2.4) {$y_1^{i_1},~r_1 = 3,~i_1 = 0,\ldots,5$};
\node[right] at (10.5,1.2) {$y_2^{i_1},~r_2 = 3,~i_2 = 0,\ldots,5$};
\node[right] at (10.5,0) {$y_3^{i_3},~r_3 = 6,~i_3 = 0,\ldots,2$};

\foreach \x in {0,...,17}
  {\draw[myred, only marks, mark=x, mark size=4pt] plot coordinates {(0.5*\x+0.6,3.6)};
  }

\foreach \x in {0,...,5}
  {\node[myorange,rectangle, draw,font=\tiny,fill=myorange!30,text = black ] at (1.5*\x+0.6,2.4) {$y_1^{\x}$};}

\foreach \x in {0,...,5}
  {\node[mygreen,rectangle, draw,font=\tiny,fill=mygreen!30,text = black ] at (1.5*\x+0.6,1.2) {$y_2^{\x}$};}

\foreach \x in {0,...,2}
{\node[myyellow,rectangle, draw,font=\tiny,fill=myyellow!30,text = black ] at (3*\x+0.6,0) {$y_3^{\x}$};}
\end{tikzpicture}
}
\caption{Updates for $S=3$, $r_1=r_2 = 3$ and $r_3 = 6$. Each marker represents one update: $(X^k, \widehat{X}^k)$ is updated at each global time $k=0,1,\ldots,17$. If generalized gradient sliding is used, then this involves $T_k$ iterations of mirror descent updates at iteration $k$. $y_1$ is updated at each local time $i_1 = 0,\ldots,5$, i.e. global time $k = 0,3,6,\ldots,15$, and similarly for $y_2$ and $y_3$. }
\end{figure}

\begin{algorithm}
\caption{(Accelerated) Multi-timescale PDHG}\label{alg:main-algo}
\begin{algorithmic}
\Require $\{\alpha_{s,i_s}\},\{\theta_k\},\{\eta_{k,s}\},\{\tau_{s,i_s}\},\{r_s\},X^{init},Y^{init}$
\Ensure Primal dual pair $Z^N$
\State Initialize $(X^{k'},\widehat{X}^{k'},Y^{k'})\gets (X^{init},X^{init},Y^{init})$ for all $k'< 0$
\For{$k = 0,1,\ldots,N$}\Comment{implicitly $i_s = \lfloor k/r_s\rfloor$ for all $s\in [S]$}
\For{$s\in [S]$ such that $k = 0 ~(\text{ mod } r_s)$} 
\State {\color{blue}{Dual update:}} compute $\widetilde{X}_{s}^{i_s}$ using \eqref{eq:update_x_tilde}, then update $y_s^{i_s}$ using \eqref{eq:update_y}
\EndFor
\State {\color{red}{Primal update:}} compute $K^*\overline{Y}^k$ where $\overline{y}_s^k = y_s^{\lfloor k/r_s\rfloor}$ for $s\in [S]$, update $(X^k,\widehat{X}^k)$ satisfying \eqref{eq:primal-approx-prop} 
\EndFor 
\State Compute $Z^N$ using \eqref{eq:ergodic}. 
\end{algorithmic}
\end{algorithm}

\subsection{Convergence of the multi-timescale updates}\label{sec:convergence-convex}
In this section, we assume $F$ is a generic convex function satisfying \eqref{eq:prop-F} for some $\mu\geq 0$. We propose conditions on the parameters of  Algorithm \ref{alg:main-algo} which ensure the convergence of the resulting multi-timescale updating procedure. 
The proof of convergence of Algorithm \ref{alg:main-algo} follows a similar type of argument as the proof of convergence of PDHG \cite{chambolle_first-order_2011} and the Decentralized Communication Sliding \cite{lan_communication-efficient_2020}: the primal updates \eqref{eq:update-x-cs} and dual updates control the following two terms (\eqref{eq:primal_bound}, \eqref{eq:dual_bound}): 
\begin{align*}
    &\left\{\sum_{k=0}^N \langle K^*\overline{Y}^k,\widehat{X}^k - X\rangle +F(\widehat{X}^k) - F(X)\right\} \\
    &\qquad+ \left\{ \sum_{i_s=0}^{N_s-1}\langle -K_{s}\widetilde{X}_s^{i_s},y_s^{i_s} - y_s\rangle + r_s(R^*_s(y_s^{i_s}) - R^*_s(y_s))\right\}. 
\end{align*}

The above sum (approximately) matches the gap $\sum_{k=0}^N G(\widehat{Z}^k;Z)$ up to an additive term (\eqref{eq:mix_terms})
\begin{displaymath}
    \sum_{s=1}^S \sum_{k=0}^{N}\langle \widehat{X}^{k}-\widetilde{X}_{s}^{\lfloor k/r_s\rfloor},K^*_{s}(y_s-\overline{y}_s^{k})\rangle = \sum_{i_s=0}^{N_s-1}\langle\sum_{i=0}^{r_s-1} \widehat{X}^{r_si_s+i}-\widetilde{X}_{s}^{i_s},K^*_{s}(y_s-y_s^{i_s})\rangle.
\end{displaymath}
Notice that due to the different timescales for the duals, we bound the above terms \textit{at dual time scales}: instead of controlling $\langle \widehat{X}^{k}-\widetilde{X}_{s}^{\lfloor k/r_s\rfloor},K^*_{s}(y_s-y_s^{\lfloor k/r_s\rfloor})\rangle$ for each $k$, we control the cumulative term (sum from $k=r_si_s$ to $k = r_s(i_1+1)-1$). With our choice of the $\widetilde{X}_s^{i_s}$ and the mixture terms used in primal proximal updates, the following results hold. 

\begin{theorem}\label{thm:main}
Assume that \eqref{eq:prop-F} holds with some $M,\mu \geq 0$, then with the following choice of parameters: $\alpha_{s,i_s}=\alpha=1$, $\theta_k = 1$; $\eta_{k,s} = \eta\rho_s$ where $\rho_s\geq 0$ and $\sum_{s=1}^S\rho_s = 1$; $\tau_s = \frac{2\widetilde{\kappa}_s^2}{\rho_s\eta}$ where $\widetilde{\kappa}_{s}:=\sup_{\|y_s\|\leq 1}\|K_{s}^*y_s\|_{*}$, denote $\overline{r} = \sum_{s=1}^S r_s\rho_s$, we have $\forall Z\in \mc Z = \mc X\times \R^n$, 
\begin{align*}
(N+1)\cdot G(Z^k;Z)&\leq \eta\overline{r}D(X,X^{init}) - \eta  D(X,X^N) +\sum_{k=0}^N \delta_k(X)\\
    &\quad +  \sum_{s=1}^S \tau_{s}r_s\cdot\left\{\frac{3}{2}D(y_s,y_s^{init}) - \frac{1}{2}D(y_s,y_s^{N_s-1})\right\}.
\end{align*}
\end{theorem}

\begin{proof}[Proof of Theorem \ref{thm:main}]
 
\textbf{Primal update properties.} Summing \eqref{eq:primal-approx-prop} over $k$, and defining $\eta_{k,s} = 0$ for all $k< 0$ and $k\geq N+1$, we have
\begin{align}\label{eq:primal_bound}
    &\quad \sum_{k=0}^N\langle K^* \overline{Y}^k,\widehat{X}^k-X\rangle + F(\widehat{X}^k) - F(X)\nonumber \\
    &\leq \sum_{k=0-\max\{r_s\}}^N \left( \sum_{s=1}^S\eta_{k+r_s,s} \right)D(X,X^{k})-\sum_{k=0}^N\sum_{s=1}^S  \eta_{k,s} D(\widehat{X}^k,X^{k-r_s})+ \sum_{k=0}^N \left(\delta_k(X)-\eta_kD(X,X^k)\right)\nonumber\\
    &\leq \eta \overline{r}D(X,X^{init})-  \eta  D(X,X^{N})-\sum_{k=0}^N\sum_{s=1}^S  \eta_{k,s} D(\widehat{X}^k,X^{k-r_s})+ \sum_{k=0}^N \delta_k(X),
\end{align}
where the last step is because $X^{k} =X^{init} $ for all $k< 0$, and by our choice that $\eta_{k,s} = \eta \rho_s$ for $k=0,1,\ldots,N$, 
\begin{displaymath}
    \sum_{k=0-\max\{r_s\}}^{-1} \sum_{s=1}^S\eta_{k+r_s,s}  \leq \eta \sum_{s=1}^S r_s\rho_s=  \eta \overline{r},\quad \sum_{s=1}^S\eta_{k'+r_s,s}  \leq \eta( \sum_{s=1}^S \rho_s) = \eta,~k'=0,\ldots,N-1.
\end{displaymath}

\textbf{Dual update properties.} By the updating rule for the dual, we have by Proposition 2 in \cite{lan_communication-efficient_2020} for any $y_s\in \R^{n_s}$, 
\begin{equation*}
     \langle -\frac{1}{r_s}K_{s}\widetilde{X}_{s}^{i_s},y_s^{i_s}-y_s\rangle + R^*_s(y_s^{i_s})  - R^*_s(y_s) \leq \tau_{s,i_s} \left(D(y_s,y_s^{i_s-1}) - D(y_s,y_s^{i_s})-D(y^{i_s}_s,y_s^{i_s-1})\right).
\end{equation*}
Thus, with $\tau_{s,i_s} = \tau_s$ for all $i_s$, summing over the above, we get 
\begin{align}\label{eq:dual_bound}
     &\quad \sum_{i_s=0}^{N_s-1}\langle -\frac{1}{r_s}K_{s}\widetilde{X}_{s}^{i_s},y_s^{i_s}-y_s\rangle + R^*_s(y_s^{i_s})  - R^*_s(y_s)\nonumber\\
     &\leq \tau_{s} \left (D(y_s,y_s^{init}) - D_{w^y_s}(y_s,y_s^{N_s-1})\right) - \tau_{s} \cdot\sum_{i_s=0}^{N_s-1} D(y^{i_s}_s,y_s^{i_s-1}).
\end{align}

\textbf{Gap properties.} Recall that for each $s\in [S]$, $\overline{y}_s^k = y_s^{\lfloor k/r_s\rfloor}$, thus we have
\begin{align}\label{eq:mix_terms}
 \sum_{k=0}^{N}\left\{\langle \widehat{X}^{k},K_s^*y_s\rangle - \langle X,K^*_s\overline{y}_s^{k}\rangle\right\}&= \sum_{i_s=0}^{N_s-1}\left\{\sum_{i=0}^{r_s-1}\langle \widehat{X}^{r_si_s+i},K^*_{s}y_s\rangle - r_s\langle X,K^*_{s}y_s^{i_s}\rangle\right\}\nonumber\\
    &= \sum_{i_s=0}^{N_s-1}\langle\sum_{i=0}^{r_s-1} \widehat{X}^{r_si_s+i}-\widetilde{X}_{s}^{i_s},K^*_{s}(y_s-y_s^{i_s})\rangle \nonumber \\
    &\quad +\sum_{k=0}^N \langle \widehat{X}^{k}-X,K^*_{s}\overline{y}_s^{k}\rangle +\sum_{i_s=0}^{N_s-1} \langle K_{s}\widetilde{X}_{s}^{i_s}, y_s- y_s^{i_s}\rangle .
\end{align}
Recall that for $i_s=0,1,\ldots,N_s-1$,
\begin{displaymath}
    \widetilde{X}_{s}^{i_s} =\alpha(\sum_{k'=r_si_s-r_s}^{r_si_s-1}\widehat{X}^{k'} - \sum_{k'=r_si_s-2r_s}^{r_si_s-r_s-1}{X}^{k'}) + \sum_{k'=r_si_s-r_s}^{r_si_s-1}X^{k'}.
\end{displaymath}

We first bound the first term in \eqref{eq:mix_terms}. Notice that for $i_s=0,1,\ldots,N_s-1$, we have
\begin{align*}
    &\quad \langle\sum_{i=0}^{r_s-1} \widehat{X}^{r_si_s+i}-\widetilde{X}_{s}^{i_s},K^*_{s}(y_s-y_s^{i_s})\rangle\\
    & = \langle \sum_{i=0}^{r_s-1}(\widehat{X}^{r_si_s+i}-X^{r_s(i_s-1)+i}) - \alpha\sum_{i=0}^{r_s-1} (\widehat{X}^{r_s(i_s-1)+i}-X^{r_s(i_s-2)+i}),K^*_{s}(y_s-y_s^{i_s})\rangle\\
    & = \langle \sum_{i=0}^{r_s-1}(\widehat{X}^{r_si_s+i}-X^{r_s(i_s-1)+i}) ,K^*_{s}(y_s-y_s^{i_s})\rangle\\
    &\quad -\alpha \langle \sum_{i=0}^{r_s-1} (\widehat{X}^{r_s(i_s-1)+i}-X^{r_s(i_s-2)+i}),K^*_{s}(y_s-y_s^{i_s-1})\rangle\\
    & \quad + \alpha \langle \sum_{i=0}^{r_s-1} (\widehat{X}^{r_s(i_s-1)+i}-X^{r_s(i_s-2)+i}),K^*_{s}(y_s^{i_s}-y_s^{i_s-1})\rangle.
\end{align*}

Thus, with $\alpha = 1$, and recall that for $i_s=0$, $i=0,\ldots,r_s-1$, $\widehat{X}^{r_s(i_s-1)+i}-X^{r_s(i_s-2)+i} = X^{init} - X^{init} = \mb 0$, we have
\begin{align*}
   &\quad \sum_{i_s=0}^{N_s-1}\langle\sum_{i=0}^{r_s-1} \widehat{X}^{r_si_s+i}-\widetilde{X}_{s}^{i_s},K^*_{s,v}(y_s-y_s^{i_s})\rangle \\
   &= \langle \sum_{i=0}^{r_s-1}(\widehat{X}^{N-r_s+i}-X^{N-2r_s+i}) ,K^*_{s}(y_s-y_s^{N_s-1})\rangle\\
   &\quad +  \sum_{i_s=1}^{N_s-1}\langle \sum_{i=0}^{r_s-1} (\widehat{X}^{r_s(i_s-1)+i}-X^{r_s(i_s-2)+i}),K^*_{s}(y_s^{i_s}-y_s^{i_s-1})\rangle\\
   &\leq \sum_{i=0}^{r_s-1} \|\widehat{X}^{r_s(N_s-1)+i}-X^{r_s(N_s-2)+i}\|\cdot \| K^*_{s}(y_s-y_s^{N_s-1})\|_{*} \\
   &\quad +  \sum_{i_s=1}^{N_s-1}\sum_{i=0}^{r_s-1}   \|\widehat{X}^{r_s(i_s-1)+i}-X^{r_s(i_s-2)+i}\|\cdot \|K^*_{s}(y_s^{i_s}-y_s^{i_s-1})\|_{*}
\end{align*}

Thus, for any $\rho>0$, we have 
\begin{align}\label{eq:mix_term1}
    &\quad \sum_{i_s=0}^{N_s-1}\langle\sum_{i=0}^{r_s-1} \widehat{X}^{r_si_s+i}-\widetilde{X}_{s}^{i_s},K^*_{s,v}(y_s-y_s^{i_s})\rangle \nonumber\\
    &\leq \sum_{k=0}^{N} \frac{\rho}{2}\|\widehat{X}^{k}-X^{k-r_s}\|^2 + \frac{r_s\widetilde{\kappa}^2_{s}}{2\rho}(\sum_{i_s=1}^{N_s-1}\|y_s^{i_s}-y_s^{i_s-1}\|^2 +  \|y_s-y_s^{N_s-1}\|^2).
\end{align}

\textbf{Bounding the gap.}
Thus, with \eqref{eq:primal_bound}, \eqref{eq:dual_bound}, and \eqref{eq:mix_term1}, we have the following upper bound on the gap 
\begin{align*}
    &\quad \sum_{k=0}^N G(\widehat{X}^k,\overline{Y}^k;Z)\\
    &=\sum_{k=0}^N \left\{F(\widehat{X}^k) -R^*(Y)- F(X) + R^*(\overline{Y}^k)\right\} + \sum_{s=1}^S \sum_{k=0}^N\left\{\langle K_{s}\widehat{X}^k,Y_s\rangle - \langle K_sX,\overline{Y}^k\rangle\right\}\\
     &\leq \eta \overline{r}D(X,X^{init})- \eta  D(X,X^{N})-\sum_{k=0}^N\sum_{s=1}^S  \eta_{k,s} D(\widehat{X}^k,X^{k-r_s})+ \sum_{k=0}^N \delta_k(X)\\
    &\quad +  \sum_{s=1}^S \tau_{s}r_s\left\{D(y_s,y_s^{init}) - D(y_s,y_s^{N_s-1})-\sum_{i_s=0}^{N_s-1} D(y^{i_s}_s,y_s^{i_s-1})\right\}\\
    &\quad + \sum_{k=0}^{N} \sum_{s=1}^S\frac{\eta\rho_s}{2}\|\widehat{X}^{k}-X^{k-r_s}\|^2 + \sum_{s=1}^S\frac{r_s\widetilde{\kappa}^2_{s}}{2\eta\rho_s}(\sum_{i_s=1}^{N_s-1}\|y_s^{i_s}-y_s^{i_s-1}\|^2 +  \|y_s-y_s^{N_s-1}\|^2)
\end{align*}
where we take $\rho = \eta \rho_s$ in \eqref{eq:dual_bound}. Thus, with $\frac{\widetilde{\kappa}^2_{s}}{\rho_s\tau_s}\leq \frac{\eta}{2}$ for all $s\in [S]$, we have
\begin{align*}
    \sum_{k=0}^{N} G(\widehat{X}^k,\overline{Y}^k;Z)&\leq \eta\overline{r}D(X,X^{init}) - \eta  D(X,X^{N}) +\sum_{k=0}^N \delta_k(X)\\
    &\quad +  \sum_{s=1}^S \tau_{s}r_s\cdot\left\{\frac{3}{2}D(y_s,y_s^{init}) - \frac{1}{2}D(y_s,y_s^{N_s-1})\right\}.
\end{align*}

The result follows since $Z^N $ is the ergodic mean of $(\widehat{X}^k,\overline{Y}^k)$ and $F,R$ are convex.
\end{proof}

When the primal update \eqref{eq:pdhg-x} is approximated through the generalized communication sliding procedure, from Corollary \ref{cor:cs-prop-aux}, we have the following results. 
\begin{corollary}\label{cor:main-cs}
Under the conditions in Theorem \ref{thm:main}, and assume that $(X^k,\widehat{X}^k)$ are constructed using the generalized communication sliding \eqref{eq:update-x-cs} with $\lambda_t = t+1$, $\beta_t = t/2$, and $T_k = T\geq 1$. Then the following holds for all $Z\in \mc Z$ 
\begin{align*}
    &\quad (N+1) \cdot G(Z^N;Z)\\
     &\leq \eta\left\{\frac{3}{2}\overline{r}D(X,X^{init}) -  D(X,X^{N})\right\}\\
    &\quad + \frac{1}{\eta}\left\{ \sum_{s=1}^S \frac{\widetilde{\kappa}_s^2r_s}{\rho_s}\left\{3D(y_s,y_s^{init}) - D(y_s,y_s^{N_s-1})\right\} + \frac{4M^2(N+1)}{T+3}\right\}
\end{align*}
    
\end{corollary}

\begin{proof}[Proof of Corollary \ref{cor:main-cs}]
Recall that from Corollary \ref{cor:cs-prop-aux}, we have for $k=0,1,\ldots,N$, with $\eta_k=\eta$ and $T_k = T$
\begin{equation*}
    \delta_k(X) = \frac{2\eta}{T(T+3)}\left(D(X,X^{k-1})  
     - D(X,X^k)\right) + \frac{4M^2}{\eta(T+3)}.
\end{equation*}
Thus, summing over $k$, we have 
\begin{equation*}
     \sum_{k=0}^N\delta_k(X)=  \frac{2\eta}{T(T+3)}\left(D(X,X^{init})  
     - D(X,X^N)\right) + \frac{4M^2(N+1)}{\eta(T+3)}
\end{equation*}
The result follows from noticing that for any $T\geq 1$, $\frac{2}{T(T+3)}\leq \frac{1}{2} \leq \frac{\overline{r}}{2}$.

\end{proof}

\begin{corollary}\label{cor:main} 
For $\widehat{X}\in {\mc X}$, assume that the following are finite:
\begin{displaymath}
  D(\widehat{X},X^{init})\leq D^X<\infty,\quad  \sup_{y_s\in \domain(R^*_s)}D (y_s,y_s^{init})\leq D_s^y<\infty. 
\end{displaymath}
Under the conditions in Corollary \ref{cor:main-cs}, taking $\eta = (\sum_{s'=1}^S \widetilde{\kappa}_{s'}\sqrt{D_{s'}^y})\sqrt{\frac{8}{3D^X}}$, $\rho_s = \frac{\widetilde{\kappa}_s\sqrt{D_s^y}}{\sum_{s'=1}^S \widetilde{\kappa}_{s'}\sqrt{D_{s'}^y}}$, and $T \geq \lfloor \frac{4M^2(N+1)}{\overline{r}(\sum_{s=1}^S \widetilde{\kappa}_{s}\sqrt{D_{s}^y})^2}\rfloor$ where $\overline{r}:=\sum_{s=1}^S r_s\rho_s$, we have
\begin{equation}\label{eq:main3}
    \sup_{Y'\in \R^n}G(Z^N;\widehat{X},Y') \leq \frac{2\sqrt{6}\overline{r}\cdot (\sum_{s=1}^S \widetilde{\kappa}_{s}\sqrt{D_{s}^y})\cdot \sqrt{D^X}}{N+1}. 
\end{equation}

\end{corollary}
\begin{proof}[Proof of Corollary \ref{cor:main}]
From Corollary \ref{cor:main-cs}, we first notice that with $\rho_s = \frac{\widetilde{\kappa}_s\sqrt{D_s^y}}{\sum_{s'=1}^S \widetilde{\kappa}_{s'}\sqrt{D_{s'}^y}}$ 
\begin{displaymath}
    \sum_{s=1}^S \frac{\widetilde{\kappa}_s^2r_sD_s^y}{\rho_s} = \overline{r}(\sum_{s'=1}^S \widetilde{\kappa}_{s'}\sqrt{D_{s'}^y})^2. 
\end{displaymath}
Thus, we have
\begin{displaymath}
    T +3 \geq \frac{4M^2(N+1)}{\overline{r}(\sum_{s=1}^S \widetilde{\kappa}_{s}\sqrt{D_{s}^y})^2} \implies \frac{4M^2(N+1)}{T+3} \leq \sum_{s=1}^S \frac{\widetilde{\kappa}_s^2r_sD_s^y}{\rho_s}
\end{displaymath}
Thus, with the additional assumptions, we get 
\begin{align*}
    \sup_{Y'\in  \R^n}G(Z^N;\widehat{X},Y')&
     \leq (N+1)^{-1}\left\{\frac{3\eta D^X}{2}(\sum_{s=1}^S r_s\rho_s) + \frac{4}{\eta}(\sum_{s=1}^S \frac{\widetilde{\kappa}_s^2r_sD_s^y}{\rho_s})\right\}\\
     &= (N+1)^{-1}\left\{\frac{3\eta D^X}{2}\cdot \overline{r} + \frac{4}{\eta}\overline{r}(\sum_{s'=1}^S \widetilde{\kappa}_{s'}\sqrt{D_{s'}^y})^2\right\}\\
     & = \frac{2\sqrt{6}\overline{r}(\sum_{s'=1}^S \widetilde{\kappa}_{s'}\sqrt{D_{s'}^y})\cdot \sqrt{D^X}}{N+1}.
\end{align*}

\end{proof}

\textbf{Discussion on the complexities.} Corollary \ref{cor:main} implies that to find an $\epsilon$-suboptimal solution, one can take 
\begin{displaymath}
    N = O(\frac{\overline{r}\cdot (\sum_{s=1}^S \widetilde{\kappa}_{s}\sqrt{D_{s}^y})\cdot \sqrt{D^X}}{\epsilon}),\quad T = O(\frac{M^2\sqrt{D^X}}{\epsilon\sum_{s=1}^S \widetilde{\kappa}_{s}\sqrt{D_{s}^y}}),
\end{displaymath}
making the total number of subgradient oracles to $F$  
\begin{displaymath}
    NT = O(\frac{M^2N^2}{\overline{r}(\sum_{s'=1}^S \widetilde{\kappa}_{s'}\sqrt{D_{s'}^y})^2})=O(\frac{\overline{r}M^2D^X}{\epsilon^2}).
\end{displaymath}
This agrees with \cite{lan_communication-efficient_2020} for the case when $r_s = 1$ for all $s$.

\textbf{Discussion on the costs.} 
To further illustrate the benefits of having different update frequencies for different duals, we analyze the ``cost'' of Algorithm \ref{alg:main-algo}. Precisely, we assume that the cost of one update to $y_s$ is $c_s$. This can be used to model the computation costs of matrix-vector multiplications ($K_s\widetilde{X}_s^{i_s}$ in \eqref{eq:update_x_tilde} and updating the $K_s^*\overline{y}_s^k$ term in \eqref{eq:primal-md-obj}). As another motivation, as will be seen in Section \ref{sec:app-do} and Algorithm \ref{alg:main-algo-distributed}, when applying our multi-timescale PDHG to distributed optimization problems, only one round of communication is needed for each update of $y_s$. Thus, the costs here can also represent the communication costs for every dual update. 

Below, we consider the case when the total cost is additive. With the above $N$, the dual variable $y_s$ is updated $O(\frac{\overline{r}}{r_s\epsilon})$ times, which is different for duals with different $r_s$. Thus, suppose one is allowed to choose the update frequencies $\{r_s\}_{s\in [S]}$, to minimize the total cost to find an $\epsilon$ suboptimal solution, the following should be (approximately) minimized
\begin{displaymath}
    O(\overline{r}\sum_{s=1}^S \frac{c_s}{r_s}) = O((\sum_{s=1}^S\rho_sr_s)\cdot(\sum_{s=1}^S \frac{c_s}{r_s})).
\end{displaymath}
With $r_s \propto \sqrt{c_s/\rho_s}$\footnote{Here and below, $\propto$ means (approximately) proportional to, i.e. there exists $r_0\in \R$ such that $r_s \approx r_0\sqrt{c_s/\rho_s}$ for all $s\in [S]$.}, the above becomes $O((\sum_{s=1}^S \sqrt{c_s\rho_s})^2)$. As a comparison, the strategy where all $r_s=r'_0$ are the same has the cost $O(\sum_{s=1}^Sc_s)$. By Cauchy–Schwarz inequality, $(\sum_{s=1}^S \sqrt{c_s\rho_s})^2\leq \sum_{s=1}^Sc_s$, and the difference can be very large when $\{c_s\rho_s\}_{s\in [S]}$ are very different, thereby showing the benefit of optimizing the updating rates $\{r_s\}_{s\in [S]}$ when $\{c_s/(\widetilde{\kappa}_s\sqrt{D^y_s})\}_{s\in [S]}$ are heterogeneous.

The additive cost is motivated by resources consumption when sending messages along each edge. In general, the total cost can be an arbitrary set function of the set of duals updated. For instance, to model time required to send messages (in parallel) where total time depends on the largest time, the cost could be $\max_{s\in \mc S}c_s$. Thus, the flexibility in choosing the updating rates allows the algorithm users to adapt the rates to the cost structures, leading to potentially lower costs.

\subsection{Accelerated convergence under strong convexity}\label{sec:convergence-acc}
With strong convexity of $F$, i.e. $\mu>0$, the convergence rate can be improved from $1/N$ to $1/N^2$, with a different set of parameters. In the following, we present these results.

\begin{theorem}\label{thm:acc_convergence}
Assume that $\eqref{eq:prop-F}$ holds with some $M,\mu>0$. Further assume that $D(X,X')\leq \frac{C}{2}\|X-X'\|^2$ for all $X,X'\in \mc X$ for some $1\leq C<\infty$. Let $\{\rho_s\}_{s\in [S]}$ be a distribution over $[S]$, $\overline{r} = \sum_{s=1}^S r_s\rho_s$ and similarly define $\overline{r^2}$ and $\overline{r^3}$. 
    
    With $\alpha_{s,i_s} = 1$, $\theta_k = k+ 2\overline{r^2}/\overline{r}$, $\eta_k = \frac{\mu }{2\overline{r}C}(k+\overline{r^2}/\overline{r})$, $\eta_{k,s} = \eta_k\rho_s$, $\tau_{s,i_s}(\sum_{k'=r_si_s}^{r_si_s+r_s-1}\theta_{k'}) = \tau_s= \frac{\widetilde{\kappa}_s^2}{\rho_s}\cdot\frac{4r_s\overline{r^2}C}{\mu}$. We have $\forall Z\in \mc Z=\mc X\times \R^n$, 
\begin{equation*}
     G(Z^N;Z) \leq \frac{2}{N(N+1)}\biggl\{\frac{5(\overline{r^2}/\overline{r})^2)}{2C}D(X,X^{init})+\sum_{k=0}^N\theta_k\delta_k(X)+ \sum_{s=1}^S\tau_{s}D(y_s,y_s^{init})  \biggr\}.
\end{equation*}

\end{theorem}

\begin{proof}[Proof of Theorem \ref{thm:acc_convergence}]
\textbf{Primal update properties.} Taking a weighted sum of \eqref{eq:primal-approx-prop} over $k$, and defining $\eta_{k,s} = 0$ for all $k< 0$ and $k\geq N+1$, we have
\begin{align*}
    &\quad \sum_{k=0}^N\theta_k\left\{\langle K^* \overline{Y}^k,\widehat{X}^k-X\rangle + F(\widehat{X}^k) - F(X) \right\}\\
    &\leq \sum_{k=0}^N\theta_k\left(\sum_{s=1}^S  \eta_{k,s} \left(D(X,X^{k-r_s})-D(\widehat{X}^k,X^{k-r_s})\right)-(\frac{\mu}{C}+\eta_k)D(X,X^k)+\delta_k(X)\right)\\
    &\leq \frac{5(\overline{r^2}/\overline{r})^2)}{2C}D(X,X^{init})-\sum_{k=0}^N\theta_k\sum_{s=1}^S  \eta_{k,s} D(\widehat{X}^k,X^{k-r_s})+\sum_{k=0}^N\theta_k\delta_k(X),
\end{align*}
where the last step is because the coefficients of the term $D(X,X^k)$ (denoting $\theta_k = \eta_{k} = 0$ for all $k\geq N+1$) for $k=0,1,\ldots,N$ is the following
\begin{align*}
    &\quad \sum_{s= 1}^S \theta_{k+r_s}\eta_{k+r_s,s} - (\frac{\mu}{C}+\eta_k)\theta_k\\
    &\leq \frac{\mu}{2\overline{r}C}\left\{\sum_{s= 1}^S (k+r_s+2\overline{r^2}/\overline{r})(k+r_s+\overline{r^2}/\overline{r})\rho_s- (k+\overline{r^2}/\overline{r}+2\overline{r})(k+2\overline{r^2}/\overline{r})\right\}\\
    & = \frac{\mu}{2\overline{r}C}\left\{ \left( k^2 + (2\overline{r} + \frac{3\overline{r^2}}{\overline{r}})k + (4\overline{r^2} + 2(\overline{r^2}/\overline{r})^2) \right)-\left(k^2 + (2\overline{r} + \frac{3\overline{r^2}}{\overline{r}})k +  (4\overline{r^2} + 2(\overline{r^2}/\overline{r})^2)\right)\right\}= 0,
\end{align*}
Since $X^{k} = X^{init}$ for all $k<0$, the coefficient for the term $D(X,X^{init})$ is 
\begin{displaymath}
    \sum_{s=1}^S \sum_{k=0}^{r_s-1} \eta_{k,s}\theta_{k}  \leq  \frac{\mu }{2\overline{r}C}\sum_{s=1}^S \rho_s\cdot r_s(r_s +\frac{\overline{r^2}}{\overline{r}})(r_s +2\frac{\overline{r^2}}{\overline{r}})=\frac{\mu(\overline{r^3}/\overline{r} + 5(\overline{r^2}/\overline{r})^2)}{2C}. 
\end{displaymath}

\textbf{Dual update properties.} Similar to \eqref{eq:dual_bound}, we get 
\begin{align}\label{eq:dual_bound2}
     &\quad \sum_{i_s=0}^{N_s-1}\left\{\langle -K_s\widetilde{X}_{s}^{i_s},y_s^{i_s}-y_s\rangle + (\sum_{k'=r_si_s}^{r_si_s+r_s-1}\theta_k')(R^*_s(y_s^{i_s})  - R^*_s(y_s))\right\}\nonumber\\
     &\leq \sum_{i_s=0}^{N_s-1} \tau_{s,i_s}(\sum_{k'=r_si_s}^{r_si_s+r_s-1}\theta_k')\left\{D(y_s,y_s^{i_s-1}) - D(y_s,y_s^{i_s})-D(y^{i_s}_s,y_s^{i_s-1})\right\}\nonumber\\
     &=  \tau_{s}\left\{D(y_s,y_s^{init}) - D(y_s,y_s^{N_s-1})-\sum_{i_s=0}^{N_s-1} D(y^{i_s}_s,y_s^{i_s-1})\right\}.
\end{align}

\textbf{Gap properties.} Notice that for each $s\in [S]$, we have
\begin{align}\label{eq:mix_terms2}
&\quad \sum_{k=0}^{N}\theta_k\left\{\langle \widehat{X}^{k},K^*_{s}y_s\rangle - \langle X,K^*_{s}\overline{y}_s^{k}\rangle\right\}\nonumber\\
    &= \sum_{i_s=0}^{N_s-1}\langle\sum_{i=0}^{r_s-1} \theta_{r_si_s+i}\widehat{X}^{r_si_s+i}-\widetilde{X}_{s}^{i_s},K^*_{s}(y_s-y_s^{i_s})\rangle  \nonumber\\
    &\quad +\sum_{k=0}^N \theta_k \langle \widehat{X}^{k}-X,K^*_{s}\overline{y}_s^{k}\rangle +\sum_{i_s=0}^{N_s-1} \langle K_{s}\widetilde{x}_{s}^{i_s}, y_s- y_s^{i_s}\rangle .
\end{align}
We first bound the first term in \eqref{eq:mix_terms}. With $\alpha_{s,i_s} = 1$ 
\begin{align*}
   &\quad \sum_{i_s=0}^{N_s-1}\langle\sum_{i=0}^{r_s-1} \theta_{r_si_s+i}\widehat{X}^{r_si_s+i}-\widetilde{X}_{s}^{i_s},K^*_{s,v}(y_s-y_s^{i_s})\rangle \\
   &= \langle \sum_{i=0}^{r_s-1}(\theta_{N-r_s+i}(\widehat{X}^{N-r_s+i}-X^{N-2r_s+i})) ,K^*_{s}(y_s-y_s^{N_s-1})\rangle\\
   &\quad +  \sum_{i_s=1}^{N_s-1}\langle \sum_{i=0}^{r_s-1} (\theta_{r_s(i_s-1)+i}(\widehat{X}^{r_s(i_s-1)+i}-X^{r_s(i_s-2)+i})),K^*_{s}(y_s^{i_s}-y_s^{i_s-1})\rangle\\
   &\leq \sum_{i=0}^{r_s-1} \theta_{r_s(N_s-1)+i}\|\widehat{X}^{r_s(N_s-1)+i}-X^{r_s(N_s-2)+i}\|\cdot \| K^*_{s}(y_s-y_s^{N_s-1})\|_{*} \\
   &\quad +  \sum_{i_s=1}^{N_s-1}\sum_{i=0}^{r_s-1}   \theta_{r_s(i_s-1)+i}\|\widehat{X}^{r_s(i_s-1)+i}-X^{r_s(i_s-2)+i}\|\cdot \|K^*_{s}(y_s^{i_s}-y_s^{i_s-1})\|_{*}
\end{align*}

\textbf{Bounding the gap.}
Putting the above together, and for convenience, denoting $y_s^{N_s} = y_s$, we have
\begin{align*}
    &\quad \sum_{k=0}^N \theta_kG(\widehat{X}^k,\overline{Y}^k;Z)\\
     &\leq \frac{5(\overline{r^2}/\overline{r})^2)}{2C}D(X,X^{init})-\sum_{k=0}^N\theta_k\sum_{s=1}^S  \eta_{k,s} D(\widehat{X}^k,X^{k-r_s})+\sum_{k=0}^N\theta_k\delta_k(X)\\
     &\quad + \sum_{s=1}^S\tau_s\left\{D(y_s,y_s^{init}) - D(y_s,y_s^{N_s-1})-\sum_{i_s=1}^{N_s-1} D(y^{i_s}_s,y_s^{i_s-1})\right\}\\
     &\quad + \sum_{s=1}^S \left\{\sum_{k=0}^N\frac{\theta_k\eta_{k,s}}{2}\|\widehat{X}^k - X^{k-r_s}\|^2 +\sum_{i_s=1}^{N_s}\frac{\widetilde{\kappa}_s^2}{2\rho_s}(\sum_{i=0}^{r_s-1}\frac{\theta_{r_s(i_s-1)+i}}{\eta_{r_s(i_s-1)+i}})\|y_s^{i_s} - y_s^{i_s-1}\|^2\right\}\\
    &\leq \frac{5(\overline{r^2}/\overline{r})^2)}{2C}D(X,X^{init})+\sum_{k=0}^N\theta_k\delta_k(X)+ \sum_{s=1}^S\tau_{s}D(y_s,y_s^{init}) 
\end{align*}
using 
\begin{displaymath}
    \tau_s = \frac{\widetilde{\kappa}_s^2}{\rho_s}\cdot\frac{4r_s\overline{r}C}{\mu}\geq \frac{\widetilde{\kappa}_s^2}{\rho_s}\cdot \max_{i_s\in [N_s]}(\sum_{i=0}^{r_s-1}\frac{\theta_{r_s(i_s-1)+i}}{\eta_{r_s(i_s-1)+i}}). 
\end{displaymath}

The result then follows from $\sum_{k=0}^N \theta_k\geq \frac{N(N+1)}{2}$ and convexity of $F$ and $R$.

\end{proof}

\begin{corollary}\label{cor:acc_convergence}
    Under the conditions in Theorem \ref{thm:acc_convergence}, and assume that $(X^k,\widehat{X}^k)$ are constructed using the generalized communication sliding \eqref{eq:update-x-cs} with $T_k/N = T/N\geq \max(\frac{5}{\sqrt{D_1}},\frac{64\overline{r}}{D_1})$ where $D_1 = \frac{\mu^2(\overline{r^2}/\overline{r})^2}{2M^2C^2}D_0$, $\lambda_t = t$ and $\beta^k_t = \frac{(t+1)\mu}{2\eta_kC} + \frac{t-1}{2}$ for $t = 1,\ldots,T_k$. Then the following holds for all $Z\in \mc Z$, 
\begin{align*}
     G(Z^N;Z) \leq \frac{2}{N(N+1)}\biggl\{&\frac{\mu(\overline{r^3}/\overline{r} + 5(\overline{r^2}/\overline{r})^2)}{2C}D(X,X^{init})\\
     &+ \frac{\mu (\overline{r^2}/\overline{r})^2}{C}D_0 + \frac{4\overline{r}C}{\mu}\sum_{s=1}^S\frac{\widetilde{\kappa}_s^2r_s}{\rho_s}D(y_s,y_s^{init}) \biggr\}.
\end{align*}
\end{corollary}

\begin{proof}[Proof of Corollary \ref{cor:acc_convergence}]
Recall that from Corollary \ref{cor:cs-prop-aux}, the following holds:
\begin{equation*}
\delta_k(X) = \frac{2M^2/\eta_k}{T_k(T_k+1)}\sum_{t=1}^{T_k} \frac{\lambda_t}{\beta_t}.
\end{equation*}
Thus, the result follows from Theorem \ref{thm:acc_convergence} and the following bound, which we show next:
\begin{equation*}
    \sum_{k=0}^N\theta_k \delta_k(X)\leq \frac{\mu (\overline{r^2}/\overline{r})^2}{C}D_0 .
\end{equation*}

The bound on the rest of the terms is since
\begin{displaymath}
    \frac{1}{\eta_k}\sum_{t=1}^{T_k} \frac{\lambda_t}{\beta_t^k} = \sum_{t=1}^{T_k} \frac{2tC/\mu}{(t+1) + (t-1)(k+\overline{r^2}/\overline{r})/2\overline{r}}\leq \frac{C}{\mu}(1+\frac{4(T_k-1)}{1+(k+\overline{r^2}/\overline{r})/2\overline{r}}),
\end{displaymath}
and notice that
\begin{displaymath}
    \sum_{k=0}^N \frac{k+2\overline{r^2}/\overline{r}}{k+\overline{r^2}/\overline{r}+2\overline{r}} \leq 2(N+1)\leq 4N,
\end{displaymath}
and since $N+1 \geq r_{\max}:=\max_{s\in [S]} r_s$, we have
\begin{displaymath}
    \overline{r^2} = \sum_{s=1}^S \rho_s r_s^2\leq \sum_{s=1}^S \rho_s r_s (N+1) = (N+1)\overline{r}\implies \overline{r^2}/\overline{r}\leq N+1\leq 2N.
\end{displaymath}
Thus, for $T_k/N = T/N\geq \max(\frac{5}{\sqrt{D_1}},\frac{64\overline{r}}{D_1})$ where $D_1 = \frac{\mu^2(\overline{r^2}/\overline{r})^2}{2M^2C^2}D_0$, we have
\begin{align*}
     \sum_{k=0}^N \frac{2M^2\theta_k}{\eta_kT_k(T_k+1)}\sum_{t=1}^{T_k}\frac{\lambda_t}{\beta_t^k} &\leq \frac{2M^2C}{\mu}\left\{\sum_{k=0}^N\frac{k+2\overline{r^2}/\overline{r}}{T_k(T_k+1)}+\sum_{k=0}^N \frac{4(k+2\overline{r^2}/\overline{r})}{(1+(k+\overline{r^2}/\overline{r})/2\overline{r})(T_k+1)}\right\}\\
     &\leq \frac{2M^2C}{\mu}\left\{\frac{10N^2}{T^2}+\frac{32N\overline{r}}{T}\right\}\leq \frac{\mu }{C}D_0(\overline{r^2}/\overline{r})^2. 
\end{align*}

\end{proof}

As a direct consequence, we have the following theorem. 
\begin{corollary}\label{cor:main_acc} 
For $\widehat{X}\in \overline{\mc X}$, assume that the following are finite:
\begin{displaymath}
  D(\widehat{X},X^{init})\leq D^X<\infty,\quad  \sup_{y_s\in \domain(R^*_s)}D(y_s,y_s^{init})\leq D_s^y<\infty. 
\end{displaymath}
Under the conditions in Corollary \ref{cor:acc_convergence}, taking $\rho_s =\frac{\widetilde{\kappa}_s\sqrt{D_s^y}}{\sum_{s'=1}^S \widetilde{\kappa}_{s'}\sqrt{D_{s'}^y}}$ and $D_0 = D^X$, 
\begin{displaymath}
     \sup_{Y'\in \R^n}G(Z^N;\widehat{X},Y')  \leq \frac{2}{N(N+1)}\left\{\frac{\mu(\overline{r^3}/\overline{r} + 7(\overline{r^2}/\overline{r})^2)}{2C}D^X+ \frac{4C(\overline{r})^2}{\mu}(\sum_{s'=1}^S \widetilde{\kappa}_{s'}\sqrt{D_{s'}^y})^2 \right\}.
\end{displaymath}
\end{corollary}

Notice that \eqref{eq:prop-F} implies that $\|x-x'\|\leq \frac{M}{\mu}$ for all $x,x'\in \mc X$. Thus, one can take $D^X = O(\frac{CM^2}{\mu^2})$. The resulting upper bound, when $\overline{r^3} = O((\overline{r})^3)$ and $\overline{r^2} = O((\overline{r})^2)$, becomes 
\begin{displaymath}
     \sup_{Y'\in \R^n}Q(Z^N;\widehat{X},Y')  =O( \frac{\overline{r}^2}{\mu N^2}\left\{M^2+ C(\sum_{s'=1}^S \widetilde{\kappa}_{s'}\sqrt{D_{s'}^y})^2 \right\}).
\end{displaymath}

\section{Application to distributed optimization}\label{sec:app-do}

For the problem \eqref{eq:objective_f}, we assume that $f_v:\overline{\mc X}\to \R$ is a convex and possibly non-smooth objective function such that for some $M_f,\mu_f\geq 0$, we have for all $v\in V$, 
\begin{equation}\label{eq:prop-f}
    \frac{\mu_f}{2}\|x-x'\|^2\leq f_v(x) - f_v(x')-\langle f' (x'),x-x')\rangle\leq M_f\|x-x'\|,\quad\forall x,x'\in \overline{\mc X},
\end{equation}
where $f_v':\mc X\to \R^d$ is a subgradient oracle, i.e. $f'_v(x)\in \partial f_v(x)$ for all $x\in \mc X$, and $f'_v$ is only available to agent $v$. For instance, when $\|f'_v\|_*\leq M_f$, $M = 2M_f$ holds.

To apply the proposed saddle point algorithms to the distributed optimization problem \eqref{eq:objective_f}, we adopt the popular approach of a lifted space reformulation\cite{lan_communication-efficient_2020,stephen_distributed_2011}, where the decision variables become $(x_v)_{v\in V}$, and $x_v$ is agent $v$'s local version of the decision variable $x$. The agents collaborate to reach \textit{consensus} on an approximate \textit{minimizer} of \eqref{eq:objective_f}. Below, we provide the detailed lifted space reformulation (Section \ref{sec:do-setup}), its connection to communication protocols (Section \ref{sec:agents}), and heterogeneity between the local objectives $f_v$ (Section \ref{sec:two_measures}). Then, in Section \ref{sec:do-pdhg}, we describe how our proposed multi-timescale PDHG can be applied, and provide the convergence guarantee performance.

\subsection{Lifted space formulation}\label{sec:do-setup}
To set the stage, we denote $\mc X = \prod_{v\in V} \overline{\mc X}\subset \R^{d}$, where $d = m\overline{d}$ is the dimension of the lifted space. it's easy to see that since $\overline{\mc X}$ is convex, so is the resulting $\mc X$. In the lifted space, to ensure that agents (approximately) reach consensus, that is $x_v\approx x_{v'}$ for all $v,v'\in V$, we need to impose the \textit{consensus constraint}, which results in the following \textit{penalized}problem: 
\begin{equation}\label{eq:objective_F}\tag{$\mc P_{d}^{lift}$}
    \min_{X=(x_v)_{v\in V}\in {\mc X}}\max_{Y = (y_1,\ldots,y_S)\in \R^n} F(X) + \sum_{s=1}^S \left(\langle K_sX,y_s\rangle - R^*_s(y_s)\right),\quad F(X):=\sum_{v\in V} f_v(x_v).
\end{equation}

Notice that the formulation \eqref{eq:objective_F} is exactly the same as the formulation in \eqref{eq:objective_pd_block}, thereby making (multi-timescale) PDHG applicable. Moreover, by \eqref{eq:prop-f}, \eqref{eq:prop-F} holds with $M = \sqrt{m}M_f$ and $\mu = \mu_f$. Below, we provide the details of $K_s$ and $R_s$ in \eqref{eq:objective_F}.

\subsubsection{Consensus constraints} 

In \eqref{eq:objective_F}, $K_s\in \R^{n_s\times d}$ is a matrix such that $\cap_{s\in S} \ker(K_s)$ is the subspace in $\R^{d}$ where all $\{x_v\}_{v\in V}$ are the same. For convenience, we denote $K_sX = \sum_{v\in V} K_{s,v}x_v$ for $K_{s,v}:\R^{\overline{d}}\to \R^{n_s}$, and abbreviate $K:\R^{d}\to \R^n$ where $(KX)_s = K_sX$. We make the following assumption. 
\begin{assumption}\label{assumption:kernel}
    $KX = \mb 0$ if and only if $x_v=x_{v'}$ for all $v,v'\in V$. 
\end{assumption}
As an example, denoting $\Pi:\R^{d}\to \R^{d}$ as the projection such that for any $X\in \R^{d}$, $\Pi(X)_v = \frac{1}{m}\sum_{v'\in V}x_{v'}$, then we can take $K = I-\Pi$. Moreover, for any $K$ satisfying Assumption \ref{assumption:kernel}, $K^*(KK^*)^{\dagger}K = I- \Pi$ holds.

We point out that the kernel condition in Assumption \ref{assumption:kernel} is the only requirement we impose on $K$. However, to make sure the resulting primal-dual algorithm can be implemented in a distributed fashion, additional sparsity requirements are needed depending on how agents communicate with each other. We provide details, examples, and the rationale behind the block-decomposable formulation in Section \ref{sec:agents}.

\subsubsection{Penalties}

In \eqref{eq:objective_F}, we assume that $R_s:
\R^{n_s}\to \overline{\R}$ is proper, convex, and lower-semicontinous, and $R_s(\mb 0) = 0$. 
Under these conditions, \eqref{eq:objective_pd_block} (and thus \eqref{eq:objective_F}) admits a primal-only formulation \eqref{eq:objective_pd_block-primal}. In this formulation, $R_s(K_sX)$ becomes a penalty term, penalizing the deviation of $K_sX$ from $\mb 0$. We further define $R:\R^n\to \overline{\R}$ as $R(y_1,\ldots,y_S) = \sum_{s=1}^SR_s(y_s)$.

As an example, if for all $s\in [S]$, $R_s$ is the characteristic function of the set $\{\mb 0\}$, i.e. $R_s(\mb 0) = 0$ and $R_s(y_s) = \infty$ for $y_s \neq \mb 0$, then \eqref{eq:objective_F} is equivalent to \eqref{eq:objective_f}, and $R_s^*(y_s) = 0$ for all $y_s\in \R^{n_s}$. As another example, $R_s$ can be any scaled norm, for instance $R_s(y_s) = \lambda \|y_s\|_p$ for some $p\geq 1$ and $\lambda>0$, then $R_s^*(y_s) = 0$ for $\|y_s\|_q\leq \lambda$ and $R_s^*(y_s) = \infty$ otherwise, where $\|\cdot \|_q $ is the dual norm of $\|\cdot\|_p$ (i.e. $p^{-1}+q^{-1}=1$). That is $R_s^*$ is the characteristic function of the dual-norm-ball of size $\lambda$. 

Notice that with penalties different from the characteristic functions of $\{\mb 0\}$, the problem \eqref{eq:objective_F} becomes a ``relaxation'' of \eqref{eq:objective_f}, thereby are not equivalent. As one can imagine, the two formulations get closer as the consensus constraints are penalized more. In fact, there are two tensions in choosing good penalties. Take $S=1$ and $R(Y) = \lambda\|Y\|$ as an example. 
\begin{itemize}
    \item On one hand, with larger penalty on the consensus constraint violation (i.e. larger $\lambda$), \eqref{eq:objective_F} will become a better proxy for \eqref{eq:objective_f}; this encourages larger $\lambda$.
    \item On the other hand, the diameter of $\domain(R^*)$ is $O(\lambda)$, and as suggested by Corollaries \ref{cor:main} and \ref{cor:acc_convergence}, the complexities of our algorithms are $O(\lambda)$; this encourages smaller $\lambda$. 
\end{itemize}
In Section \ref{sec:two_measures}, we propose a set of conditions on the penalties to achieve a balance between these two tensions, such that ``good solutions'' to \eqref{eq:objective_F} and \eqref{eq:objective_pd_block} (as measured using the duality gap \eqref{eq:def_gap}) are also ``good solutions'' to \eqref{eq:objective_f} (as measured by objective value suboptimality and constraint violation in \eqref{eq:def_eps_delta}), and the resulting algorithms have favorable dependence on the problem parameters.

\subsubsection{Performance measure} 
To measure the performance of $X\in {\mc X}$, following \cite{lan_communication-efficient_2020}, we consider the $(\epsilon,\delta)$-solution, satisfying the following conditions
\begin{equation}\label{eq:def_eps_delta}
    F(X) \leq F(X^*)+\epsilon,\quad \|(I-\Pi)X\|\leq \delta.
\end{equation}
That is, $X$ is $\epsilon$-suboptimal in terms of the objective value, while violating the consensus constraints by at most $\delta$.\footnote{In \cite{lan_communication-efficient_2020}, $\|KX\|\leq \delta$ is used instead of $\|(I-\Pi)X\|$, and $K$ is assumed to be the Laplacian matrix for the underlying graph of communication. However, we use a generic $K$ satisfying condition \ref{assumption:kernel}. In particular, for any $K$ that is a valid choice, $\lambda K$ is also valid for any $\lambda\neq 0$. Thus, it makes sense to ``normalize'' $K$, and we use $I-\Pi = K^*(KK^*)^{\dagger}K$.} Nevertheless, our algorithms have performance guarantees on the duality gap of the saddle point fromulation. To transfer such duality gap guarantee back to $(\epsilon,\delta)$-solution guarantee, in Section \ref{sec:two_measures}, we propose additional requirements for the regularization $R$.

\subsection{Agents, communication, and additional requirements on $K$}\label{sec:agents}

By distributed optimization, we mean that the objective functions $\{f_v\}_{v\in V}$ are distributed among $m$ \textit{primal agents}: for each $v\in V$, $\agent(x_v)$ has access to $f_v'$, the first order oracle for $f_v$, and is responsible for updating the variable $x_v$. In addition, we assume that there are $S$ \textit{dual agents}: for each $s\in [S]$, $\agent(y_s)$ is responsible for updating the variable $y_s$. 

We assume that for any pair $(s,v)\in [S]\times V$ such that $K_{s,v}\neq \mb 0$, $\agent(x_v)$ and $\agent(y_s)$ can communicate (in both directions). For instance, all agents might be nodes in a connected graph with vertices $[S]\cup V$ (representing $S$ dual agents and $m$ primal agents), and communication can be realized through edges (directly) or through paths (i.e. with the help of intermediate agents). In particular, since the graph is connected, any pair can communicate, but the resources consumed and/or time taken by communication between different pairs could be (significantly) different. 

At this point, we abstract away from how such communication is realized, and leave the discussion of the costs of communication to Section \ref{sec:do-pdhg}. Below, we provide two such realizations: \textit{decentralized} and \textit{hierarchical}, and provide examples in Figure \ref{fig:communication}.

\textbf{Decentralized setting.} In this setup, we assume that the dual variables are kept and updated by primal agents, respecting a graph based communication constraints. Precisely, let $\mc G = (V,E)$ denote an undirected, connected graph, and for each $s\in [S]$, we assign all tasks of $\agent(y_s)$ to $\agent(x_{v_s})$ for some $v_s\in V$, such that $\{v_s,v'\}\in E$ for each $K_{s,v'}\neq \mb 0$. 

As an example, let $W\in \R^{V\times V}$ be a doubly stochastic matrix such that $W_{v,v'}\neq 0$ only if $\{v,v'\}\in E$ or $v=v'$, and $\ker(I-W) = \setspan(\mb 1)$ (and so $K:=(I-W)\otimes I_{\overline{d}}$ satisfies Assumption \ref{assumption:kernel}). We can choose $S = m$, $n_s = d$, and decompose $K$ as $K_s:=(I-W)_s\otimes I_{\overline{d}}$, 
\begin{displaymath}
    K_sX = \sum_{v\in V} (I-W)_{s,v}x_v = x_s-\sum_{\{v, s\}\in E}W_{s,v}x_v, \quad s = 1,\ldots,m.
\end{displaymath}
Thus, $\agent(y_s)$'s tasks can be assigned to $\agent(x_s)$.

\textbf{Hierarchical setting.} In this setup, we assume that there is an underlying tree with nodes $[S]\cup V$, where all non-leaf nodes ($[S]$) correspond to dual agents and all leaf nodes ($V$) correspond to primal agents. Each non-leaf node can communicate with its child nodes directly. Precisely, for $s\in [S]$, we use $\child(s) \subset [S]\cup V$ to denote the child nodes of $\agent(y_s)$, and $\Descendant(s)\subset V$ to denote all \textit{primal agents} in the subtree rooted at $\agent(y_s)$. 

For convenience, for each $s\in [S]$, we denote the ``mean'' of all descendants of $\agent(y_s)$ as $\overline{x}_s = |\Descendant(s)|^{-1}\sum_{j\in \Descendant(s)} x_j$. Then, consider $K_s:\R^{d}\to \R^{|\child(s)|{\overline{d}}}$ defined as 
\begin{equation}\label{eq:def_K}
    (K_sX)_{i}= \overline{x}_i - \overline{x}_s = \overline{x}_i - \sum_{j\in \child(s)}\frac{|\Descendant(j)|}{|\Descendant(s)|}\overline{x}_j,\quad i\in \child(s).
\end{equation}
Then, it is easy to see that $K$ satisfies Assumption \ref{assumption:kernel}, and since $\overline{x}_j$ can be computed in a bottom up manner, $\{K_s\}_{s\in [S]}$ can be realized through this tree. In addition, the set of $\{K_s\}_{s\in [S]}$ admits the following orthogonality properties which will be useful in choosing $R$. We defer the proof to Appendix \ref{sec:proof_setup}. 
\begin{lemma}\label{lm:K_prop}
    Let $\{K_s\}_{s\in [S]}$ be as defined in \eqref{eq:def_K}. Then for $s\neq s'\in [S]$, $K_sK_{s'}^* = \mb 0$. In addition, denoting $\Pi_s := K_s^*(K_sK_s^*)^{\dagger}K_s$, we have for any $\widetilde{X},\widehat{X}\in \R^{d}$
    \begin{displaymath}
        \langle \widehat{X},\Pi_s\widetilde{X}\rangle=\langle \Pi_s\widehat{X},\Pi_s\widetilde{X}\rangle= \sum_{i\in \child(s)} |\Descendant(i)| \cdot \langle (K_s\widehat{X})_i,(K_s\widetilde{X})_i\rangle.
    \end{displaymath}
\end{lemma}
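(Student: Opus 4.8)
The plan is to exploit the explicit ``subtree-mean'' structure of the operators. First I would give a clean formula for $K_s^*$ acting on a vector $z = (z_i)_{i \in \child(s)} \in \R^{|\child(s)|d}$. From $(K_sX)_i = \overline{x}_i - \overline{x}_s$ and $\overline{x}_i = |\Descendant(i)|^{-1}\sum_{j\in\Descendant(i)}x_j$, each coordinate block of $K_sX$ is a linear combination of the $x_j$ for $j$ ranging over $\Descendant(s)$ only, with coefficients depending only on which child-subtree $j$ lies in. Taking the adjoint, $(K_s^*z)_v$ is supported on $v \in \Descendant(s)$, and for $v$ in the subtree of child $i$ it equals $\frac{1}{|\Descendant(i)|} z_i - \frac{1}{|\Descendant(s)|}\sum_{i'\in\child(s)} z_{i'}$ — i.e.\ a vector that is constant on each child-subtree and whose $\Descendant(s)$-average is zero (each entry $z_i$ gets weight $|\Descendant(i)|$, so $\sum_i |\Descendant(i)| \cdot (\text{value on subtree } i) = 0$).

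For the orthogonality $K_sK_{s'}^* = \mb 0$ with $s\neq s'$, I would split into the two possible cases for nodes in a tree. Case 1: neither of $s,s'$ is an ancestor of the other, so $\Descendant(s)\cap\Descendant(s') = \emptyset$; then $K_s$ reads only coordinates in $\Descendant(s)$ while $K_{s'}^*$ outputs a vector supported on $\Descendant(s')$, and the product vanishes immediately. Case 2: one is an ancestor of the other, say $s'$ is an ancestor of $s$ (or $s = $ some node with $s'$ above it); then $\Descendant(s)$ is entirely contained in a single child-subtree of $s'$, so $K_{s'}^*z$ restricted to $\Descendant(s)$ is a \emph{constant} vector $c\in\R^d$. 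Applying $K_s$ to a vector all of whose relevant blocks equal the same $c$ gives $(K_s(\text{const}))_i = c - c = \mb 0$, since $\overline{x}_i$ and $\overline{x}_s$ both collapse to $c$. Hence $K_sK_{s'}^* = \mb 0$ in all cases.

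For the second claim, I would first note $\langle\widehat X,\Pi_s\widetilde X\rangle = \langle\Pi_s\widehat X,\Pi_s\widetilde X\rangle$ is automatic since $\Pi_s = K_s^*(K_sK_s^*)^\dagger K_s$ is an orthogonal projection (it is symmetric and idempotent, using the standard pseudoinverse identity $(K_sK_s^*)^\dagger K_sK_s^*(K_sK_s^*)^\dagger = (K_sK_s^*)^\dagger$). The real content is evaluating $\langle\widehat X,\Pi_s\widetilde X\rangle$ in terms of the blocks of $K_s\widehat X, K_s\widetilde X$. Write $\Pi_s = K_s^*(K_sK_s^*)^\dagger K_s$, so $\langle\widehat X,\Pi_s\widetilde X\rangle = \langle K_s\widehat X, (K_sK_s^*)^\dagger K_s\widetilde X\rangle$. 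It therefore suffices to compute $K_sK_s^*$ as a block operator on $\R^{|\child(s)|d}$ and invert it on its range. Using the $K_s^*$ formula above, a direct computation shows $K_sK_s^*$ acts as $(K_sK_s^* z)_i = \frac{1}{|\Descendant(i)|}z_i - \frac{1}{|\Descendant(s)|}\sum_{i'}z_{i'}$ (tensored with $I_d$), whose pseudoinverse on the relevant subspace (the subspace of tuples with $\sum_i |\Descendant(i)| z_i = 0$, which contains the range) is $z \mapsto (|\Descendant(i)| z_i)_i$. Plugging this in gives exactly $\langle\widehat X,\Pi_s\widetilde X\rangle = \sum_{i\in\child(s)} |\Descendant(i)|\langle (K_s\widehat X)_i,(K_s\widetilde X)_i\rangle$, because both $K_s\widehat X$ and $K_s\widetilde X$ automatically lie in that subspace by the zero-average property noted above.

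The main obstacle is the bookkeeping in Case 2 of the orthogonality argument and in identifying the correct pseudoinverse of the rank-deficient block matrix $K_sK_s^*$: one must be careful that $K_sK_s^*$ is singular (its kernel is the span of the all-ones-type direction $z_i \propto 1/|\Descendant(i)|$, reflecting that $\sum_i|\Descendant(i)|(K_sX)_i = 0$ identically), verify that the range of $K_s$ lies in the complementary subspace, and check that the proposed inverse is genuinely the Moore–Penrose pseudoinverse there (symmetric, and acts as the identity on the range). Everything else is a mechanical substitution once the formula for $K_s^*$ is pinned down, so I would state that formula as the first displayed equation and reference it throughout.
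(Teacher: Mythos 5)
Your overall strategy is the same as the paper's: write out the explicit action of $K_s$ and $K_s^*$, prove orthogonality by a case analysis on the relative positions of $s,s'$ in the tree, and evaluate the quadratic form by computing $K_sK_s^*$ (which is indeed $D^{-1}-\frac{1}{|\Descendant(s)|}\mb 1\mb 1^T$ with $D=\diag(|\Descendant(i)|)$, tensored with $I_d$) and inverting it on the range of $K_s$. The paper does this in matrix form, quoting an explicit Moore--Penrose formula for a diagonal-minus-rank-one matrix and then using $v^TK_s=\mb 0$ (with $v\propto(|\Descendant(i)|)_i$) to collapse $\Pi_s$ to $K_s^TDK_s$; your operator-level version reaches the same identity.

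Three points need repair, one of which would actually derail your stated verification plan. First, the map $z\mapsto Dz$ is \emph{not} the Moore--Penrose pseudoinverse of $K_sK_s^*$ restricted to the range: although $K_sK_s^*(Dz)=z$ whenever $\sum_i|\Descendant(i)|z_i=\mb 0$, the vector $Dz$ generally has $\sum_i|\Descendant(i)|^2z_i\neq\mb 0$ and so does not lie in $\operatorname{range}(K_sK_s^*)$, which the pseudoinverse must map into. If you literally ``check that the proposed inverse is genuinely the Moore--Penrose pseudoinverse,'' the check fails. The fix is one line: $(K_sK_s^*)^{\dagger}K_s\widetilde X$ and $DK_s\widetilde X$ are both solutions $u$ of $K_sK_s^*u=K_s\widetilde X$, hence differ by an element of $\ker(K_sK_s^*)=\ker(K_s^*)=\operatorname{range}(K_s)^{\perp}$, which is orthogonal to $K_s\widehat X$; so the inner product is unchanged. (Equivalently, project $Dz$ back onto the range, which is what the paper's formula $D-vv^TD-Dvv^T+(v^TDv)vv^T$ together with $K_s^Tv=\mb 0$ accomplishes.) Second, your parenthetical identification of the kernel is wrong: $\ker(K_sK_s^*)$ is spanned by $z_i\propto|\Descendant(i)|$, not $z_i\propto 1/|\Descendant(i)|$ --- this is exactly what your own observation $\sum_i|\Descendant(i)|(K_sX)_i=\mb 0$ says, since the range is the hyperplane orthogonal to $(|\Descendant(i)|)_i$. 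Third, your Case 2 only treats ``$s'$ is an ancestor of $s$''; the opposite ordering should be dispatched explicitly, either by taking adjoints ($K_sK_{s'}^*=(K_{s'}K_s^*)^*$) or by noting that $K_{s'}^*z$ sums to zero over $\Descendant(s')$, so all the relevant subtree means $\overline{w}_i,\overline{w}_s$ vanish.
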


\begin{figure}[htbp]
\centering
\resizebox{.9\linewidth}{!}{
\begin{tikzpicture}[
primal/.style={circle, draw, minimum size=25},
dual/.style={rectangle, draw, minimum size=25}]
\node [primal] (x1) at (0,0) {$x_1$};
\node [primal] (x2) at (1.5,0) {$x_2$};
\node at (3,0) {$\cdots$};
\node [primal] (xm) at (4.5,0) {$x_m$};
\node [dual] (y1) at (0.75,1.5) {$y_1$};
\node at (2.25,1.5) {$\cdots$};
\node [dual] (ys) at (3.75,1.5) {$y_S$};
\draw (x1)--(y1);
\draw (x1)--(ys);
\draw (x2)--(y1);
\draw (x2)--(ys);
\draw (xm)--(ys);

\node [primal] (p1) at (6.5,0) {$x_1$};
\node [dual] (d1) at (6.5,2) {$y_1$};
\node [primal] (p2) at (9,0) {$x_2$};
\node [dual] (d2) at (9,2) {$y_2$};
\node [primal] (p3) at (10.25,1.5) {$x_3$};
\node [dual] (d3) at (10.25,3.5) {$y_3$};
\node [primal] (p4) at (7.75,1.5) {$x_4$};
\node [dual] (d4) at (7.75,3.5) {$y_4$};
\draw (p1)--(p3);
\draw (p1)--(p4);
\draw (p2)--(p3);
\draw [densely dotted] (d1)--(p1);
\draw [densely dotted] (d1)--(p3);
\draw [densely dotted] (d1)--(p4);
\draw [densely dotted] (d2)--(p2);
\draw [densely dotted] (d2)--(p3);
\draw [densely dotted] (d3)--(p1);
\draw [densely dotted] (d3)--(p2);
\draw [densely dotted] (d3)--(p3);
\draw [densely dotted] (d4)--(p1);
\draw [densely dotted] (d4)--(p4);

\node [dual] (hd3) at (12.75 + 3*1,3) {$y_4$};
\foreach \x in {0,...,5}
  {\pgfmathtruncatemacro{\label}{\x+1}
    \node [primal] (hp\x) at (12 + 1.5*\x,0) {$x_{\label}$};}
\foreach \y in {0,...,2}
  {\pgfmathtruncatemacro{\label}{\y+1}
  \pgfmathtruncatemacro{\labell}{\y*2}
  \pgfmathtruncatemacro{\labelll}{\y*2+1}
    \node [dual] (hd\y) at (12.75 + 3*\y,1.5) {$y_{\label}$};
    \draw (hp\labell)--(hd\y);
    \draw (hp\labelll)--(hd\y);
    \draw (hd3)--(hd\y);
    }
\end{tikzpicture}}
\caption{Left: abstract setting with $m$ primal agents and $S$ dual agents. Middle: realization in the decentralized setting, where $S = m=4$, $\agent(x_s) = \agent(y_s)$, and the underlying graph is $(V,E=\{\{1,3\},\{1,4\},\{2,3\}\})$. Right: realization in the hierarchical setting.  
}\label{fig:communication}
\end{figure}

\subsection{Requirements for $R$ and function similarity}\label{sec:two_measures}

Recall that when $R$ is the characteristic function of $\{\mb 0\}$, the penalized formulation \eqref{eq:objective_F} is equivalent to \eqref{eq:objective_f}. In this section, we discuss the requirements for $R$ such that the duality gap provides upper bounds on the suboptimality of the objective value and the violation of the consensus constraints.  

First, we have the following upper bounds on the suboptimality of the objective value.  
\begin{lemma}\label{lm:eps-subopt}
For any $\widehat{X}\in {\mc X}$ such that $K\widehat{X} = \mb 0$,
\begin{displaymath}
F(X)\leq F(\widehat{X}) + \sup_{Y'\in \domain(R^*)} G(X,Y;\widehat{X},Y').
\end{displaymath}
In particular, if $\sup_{Y'\in \domain(R^*)} G(X,Y;X^*,Y')\leq \epsilon$, then $F(X)\leq F(X^*) + \epsilon$, where $X^* = (x^*)_{v\in V}$ and $x^*$ is an optimal solution to \eqref{eq:objective_f}. 
\end{lemma}
\begin{proof}[Proof of Lemma \ref{lm:eps-subopt}]
   Recall that we have
\begin{displaymath}
    \sup_{Y\in \R^n} \langle KX,Y\rangle + F(X) - R^*(Y) = F(X) + R(KX).
\end{displaymath}
In addition, since $R(\mb 0) =  0$, we have $R^*(Y) = \sup_{Y'\in \R^n} \langle Y',Y\rangle - R(Y')\geq \langle \mb 0,Y\rangle - R(\mb 0) =0$, and so
\begin{displaymath}
    \langle K\widehat{X},Y\rangle + F(\widehat{X}) - R^*(Y) \leq F(\widehat{X}),\quad \forall~\widehat{X}\in {\mc X},~K\widehat{X} = \mb 0.
\end{displaymath}
The second claim follows directly from the first since $KX^* = \mb 0$. 
\end{proof}

To connect the duality gap with the constraint violation $\|(I-\Pi)X\|$ in \eqref{eq:def_eps_delta}, or with the objective value suboptimality of $\frac{1}{m}\sum_{v\in V} x_v$, it turns out additional requirements are needed for $R$. 


For convenience, we denote $\sigma_{\min}^+(K_s) = \min_{X\in \R^{d},~\Pi_s X\neq \mb 0}\frac{\|K_sX\|_*}{\|\Pi_s X\|}$, where the numerator uses the dual norm to the norm in $\R^{n_s}$ and the denominator uses the norm in $\R^{d}$. As an example, when all norms are $l_2$ norms, $\sigma_{\min}^+(K_s)$ is the smallest non-zero singular value of $K_s$.

\subsubsection{Requirements on $R$ under orthogonaltiy}
Below, we show that if $K_s$ measures the constraint violation in \textit{orthogonal} subspaces, then as long as $R_s$ grows fast enough, the duality gap provides an upper bound on the constraint violation $\|(I-\Pi)X\|$ and the suboptimality of $\Pi X$.

\begin{lemma}\label{lm:delta-subopt}
   Further assume that for any $s\neq s'\in [S]$, $K_sK_{s'}^* = \mb 0$, and for each $s\in [S]$, denoting
    \begin{displaymath}
        \Pi_s = K_s^*(K_sK_s^*)^{\dagger}K_s,\quad a_s \geq \sup_{X'\in {\mc X},~KX' = \mb 0} \|\Pi_s\nabla F(X')\|_*,
    \end{displaymath}
    where $\nabla F:{\mc X}\to \R^{d}$ is an arbitrary subgradient oracle, i.e. $(\nabla F(X))_v\in \partial f_v(x_v)$. If $\sup_{Y'\in \domain(R^*)} G(X,Y;X^*,Y')\leq \epsilon$, 
    \begin{enumerate}
        \item $X$ is an $(\epsilon,\epsilon/\xi)$-solution if for each $s\in [S]$,
    \begin{equation}\label{eq:r_s_1}
        R_s(y_s) \geq R^{ccv}_s(y_s):=\frac{\xi + a_s}{\sigma_{\min}^+(K_s)}\|y_s\|_*.
    \end{equation}
    
    \item  the projected solution $\Pi X$ is an $(\epsilon(1+1/\xi),0)$-solution if for each $s\in [S]$, 
    \begin{equation}\label{eq:r_s_2}
        a_s>0,\quad R_s(y_s) \geq R^{prj}_s(y_s):=\frac{(1+\xi) a_s}{\sigma_{\min}^+(K_s)}\|y_s\|_*.
    \end{equation}
    \end{enumerate}
\end{lemma}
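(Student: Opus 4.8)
The plan is to exploit the orthogonality $K_sK_{s'}^* = \mb 0$ to decompose $I - \Pi = \sum_{s} \Pi_s$, and then to choose, for each $s$, a \emph{good dual point} $y_s'$ in $\domain(R_s^*)$ that makes the term $\langle K_sX, y_s'\rangle - R_s^*(y_s')$ as large as $\|K_sX\|_*$ times the coefficient in front of $\|y_s\|_*$. Concretely, since $R_s^*$ is the support function of (a set containing) the dual-norm ball of radius $c_s$ whenever $R_s(y_s) \geq c_s\|y_s\|_*$, we have $R_s^*(y_s') = 0$ for all $\|y_s'\| \leq c_s$, so picking $y_s'$ to be the norming functional of $K_sX$ scaled to have norm $c_s$ gives $\langle K_sX, y_s'\rangle - R_s^*(y_s') = c_s\|K_sX\|_*$. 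Plugging $Y' = (y_s')_s$ and $\widehat{X} = X^*$ into the duality gap $G(X,Y;X^*,Y')$ (using $KX^* = \mb 0$ and $R^*(Y) \geq 0$, exactly as in Lemma \ref{lm:eps-subopt}) yields
\[
\epsilon \;\geq\; G(X,Y;X^*,Y') \;\geq\; \sum_{s=1}^S c_s\|K_sX\|_* + F(X) - F(X^*),
\]
so each $\|K_sX\|_*$ is controlled. First I would establish this master inequality; it is essentially a repackaging of Lemma \ref{lm:eps-subopt}'s argument with a non-trivial test dual.

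Next I would convert $\|K_sX\|_*$ into $\|\Pi_sX\|$ via $\|K_sX\|_* \geq \sigma_{\min}^+(K_s)\|\Pi_sX\|$ (the definition of $\sigma_{\min}^+$), and then assemble $\|(I-\Pi)X\|^2 = \sum_s \|\Pi_sX\|^2$ from the orthogonality of the ranges of the $\Pi_s$ (this is where $K_sK_{s'}^*=\mb 0$ is used; it is a Pythagorean identity for the orthogonal idempotents). For part 1, with $c_s = (\xi + a_s)/\sigma_{\min}^+(K_s)$, the master inequality also needs a lower bound on $F(X) - F(X^*)$ in terms of $\|\Pi_sX\|$: by convexity $F(X) - F(X^*) \geq \langle \nabla F(X^*), X - X^*\rangle = \langle \nabla F(X^*), (I-\Pi)X\rangle$ (since $\Pi X^* = X^*$ and $\Pi(X - X^*) $ contributes $\langle \nabla F(X^*),\Pi X - X^*\rangle$, which... needs a little care — one wants to split $X - X^* = (I-\Pi)X + (\Pi X - X^*)$ and handle the second piece, or simply use that $\langle \nabla F(X^*), \Pi X - X^*\rangle \geq 0$ is \emph{not} automatic, so instead bound $|\langle \nabla F(X^*), (I-\Pi)X\rangle| \leq \sum_s \|\Pi_s\nabla F(X^*)\|_*\|\Pi_sX\| \leq \sum_s a_s\|\Pi_sX\|$). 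Combining, $\epsilon \geq \sum_s (c_s\sigma_{\min}^+(K_s) - a_s)\|\Pi_sX\| = \xi\sum_s\|\Pi_sX\| \geq \xi\|(I-\Pi)X\|$, which gives $\|(I-\Pi)X\| \leq \epsilon/\xi$; together with Lemma \ref{lm:eps-subopt} (whose hypothesis is satisfied since $R_s^{ccv}$ dominates the characteristic-function regime in the relevant sense — actually one just re-derives $F(X) \leq F(X^*) + \epsilon$ directly from the same master inequality by dropping the nonnegative $\sum_s c_s\|K_sX\|_*$ term) this is the claimed $(\epsilon,\epsilon/\xi)$-solution.

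For part 2, the target is $\Pi X$, for which $(I-\Pi)(\Pi X) = \mb 0$ automatically, so the violation is exactly $0$; the work is bounding $F(\Pi X) - F(X^*) \leq \epsilon(1 + 1/\xi)$. I would write $F(\Pi X) \leq F(X) + (F(\Pi X) - F(X))$ and bound the second difference by convexity/Lipschitzness: $F(\Pi X) - F(X) \leq \langle \nabla F(\Pi X), \Pi X - X\rangle = -\langle\nabla F(\Pi X),(I-\Pi)X\rangle$, but I would rather route through $X^*$: $F(\Pi X) - F(X^*) \leq \langle\nabla F(\Pi X),\Pi X - X^*\rangle$, and since $\Pi X - X^* \in \mathrm{range}(\Pi) = \ker K$ it is orthogonal to every $\Pi_s$, so I need a different handle — use instead $F(\Pi X) - F(X) \leq \sum_s a_s \|\Pi_s X\|$ by the same norming/decomposition argument applied to a subgradient at an appropriate point (this is exactly why $a_s$ is defined as a sup over all $X'$ with $KX' = \mb 0$, so it applies at $\Pi X$), then with $c_s = (1+\xi)a_s/\sigma_{\min}^+(K_s)$ the master inequality gives $\sum_s\|\Pi_sX\| \leq \epsilon/(\xi\,\min_s a_s \cdot\ldots)$ — more cleanly, $\epsilon \geq \sum_s (1+\xi)a_s\|\Pi_sX\| + F(X) - F(X^*)$, so $\sum_s a_s\|\Pi_sX\| \leq \epsilon/(1+\xi) \cdot \text{(something)}$; combined with $F(X) \leq F(X^*) + \epsilon$ and $F(\Pi X) \leq F(X) + \sum_s a_s\|\Pi_sX\|$, one gets $F(\Pi X) - F(X^*) \leq \epsilon + \epsilon/(1+\xi)\cdot(1+\xi)/\xi\cdot\ldots$; I will need to track the constants so they collapse to $\epsilon(1+1/\xi)$.

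The main obstacle I anticipate is \textbf{the bookkeeping around $F(X) - F(X^*)$ and $F(\Pi X) - F(X)$}: making sure the subgradient inequality is applied at a point where $a_s$ legitimately bounds $\|\Pi_s \nabla F(\cdot)\|_*$ (the definition requires $KX' = \mb 0$, which holds for $X^*$ and for $\Pi X$ but \emph{not} for $X$ itself), correctly using $X^* = \Pi X^*$ so that cross terms $\langle\nabla F, \Pi X - X^*\rangle$ either vanish by range-orthogonality or are absorbed, and propagating the $(1+\xi)$ versus $(\xi + a_s)$ coefficients so the final constants match $\epsilon/\xi$ and $\epsilon(1+1/\xi)$ exactly. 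The orthogonality/Pythagoras step and the norming-functional construction are routine; the constant-chasing in the two parts, and in particular choosing which point to linearize $F$ at in part 2, is the delicate part.
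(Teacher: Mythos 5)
Your overall strategy is the same as the paper's: a ``master inequality'' $\epsilon \geq R(KX) + F(X) - F(X^*)$ extracted from the duality gap (the paper gets it by evaluating $\sup_{Y'}$ exactly as $R(KX)$ rather than by constructing a norming dual, but these are equivalent), then $\|K_sX\|_* \geq \sigma_{\min}^+(K_s)\|\Pi_sX\|$, the decomposition $I-\Pi = \sum_s \Pi_s$ from $K_sK_{s'}^*=\mb 0$, and a bound of the form $\sum_s a_s\|\Pi_sX\|$ on the relevant function-value difference. Part 2 of your outline, once the arithmetic is finished, matches the paper's.

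The genuine gap is in part 1, in how you lower-bound $F(X) - F(X^*)$. You linearize at $X^*$, writing $F(X)-F(X^*) \geq \langle\nabla F(X^*), X-X^*\rangle$, and you correctly observe that the cross term $\langle\nabla F(X^*),\Pi X - X^*\rangle$ does not vanish and that $\langle\nabla F(X^*),\Pi X - X^*\rangle\geq 0$ is not automatic (the oracle $\nabla F$ is an \emph{arbitrary} selection from the subdifferential, so it need not be the subgradient certifying optimality of $X^*$ over the consensus set). But your stated workaround --- bounding $|\langle\nabla F(X^*),(I-\Pi)X\rangle|$ by $\sum_s a_s\|\Pi_sX\|$ --- only controls the $(I-\Pi)X$ piece and leaves the possibly negative cross term untouched, so the chain $\epsilon \geq \xi\sum_s\|\Pi_sX\|$ does not close as written. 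The fix is the one you yourself articulate in part 2: never take a subgradient at $X^*$. Instead set $\widetilde X = \Pi X$, use optimality of $X^*$ among consensus points to get $F(X^*)\leq F(\widetilde X)$, and linearize at $\widetilde X$ (where $K\widetilde X=\mb 0$, so the definition of $a_s$ applies): $F(\widetilde X)-F(X)\leq -\langle\nabla F(\widetilde X),(I-\Pi)X\rangle = -\sum_s\langle\Pi_s\nabla F(\widetilde X),\Pi_sX\rangle\leq\sum_s a_s\|\Pi_sX\|$. Chaining $F(X)+R(KX)\leq F(X^*)+\epsilon\leq F(\widetilde X)+\epsilon\leq F(X)+\sum_s a_s\|\Pi_sX\|+\epsilon$ then yields both parts with exactly the constants claimed (part 2 follows by the same chain with the coefficient $(1+\xi)a_s$, giving $\sum_s a_s\|\Pi_sX\|\leq\epsilon/\xi$ and hence $F(\widetilde X)\leq F(X^*)+\epsilon(1+1/\xi)$). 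One further small point: $\|(I-\Pi)X\|^2=\sum_s\|\Pi_sX\|^2$ is a Pythagorean identity that needs the norm to be Euclidean; the paper only uses the triangle inequality $\|\sum_s\Pi_sX\|\leq\sum_s\|\Pi_sX\|$, which suffices and is norm-agnostic.
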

In Lemma \ref{lm:delta-subopt} (and Corollary \ref{cor:delta_subopt} below), the superscript $ccv$ means $\{R^{ccv}_s\}_{s\in [S]}$ ($\{\widehat{R}^{ccv}_s\}_{s\in [S]}$) are designed to provide guarantees on the consensus constraint violation, and the superscript $prj$ means $\{R^{prj}_s\}_{s\in [S]}$ ($\{\widehat{R}^{prj}_s\}_{s\in [S]}$) are designed to provide guarantees the projected solution $\Pi X$.

\begin{proof}[Proof of Lemma \ref{lm:delta-subopt}]
First, notice that by the orthogonality of $\{K_s\}_{s\in [S]}$, for any $Y\in \R^n$
\begin{displaymath}
    (KK^*Y)_{s} =K_{s}(\sum_{s'=1}^S K_{s'}^*y_{s'}) = K_sK_s^*y_s, ~\forall s\in [S].
\end{displaymath}
That is, $KK^*$ is diagonal, and so 
\begin{displaymath}
    ((KK^*)^{\dagger}Y)_s =  (K_sK_s^*)^{\dagger}y_s, ~\forall s\in [S].
\end{displaymath}
Thus, we can make the following decomposition
\begin{displaymath}
    K^*(KK^*)^{\dagger}KX = \sum_{s=1}^SK_s^*(K_sK_s^*)^{\dagger}K_sX=\sum_{s=1}^S\Pi_sX.
\end{displaymath}
For convenience, we denote $\widetilde{X}:=\Pi X$, and by Lemma \ref{lm:delta-subopt},
\begin{equation}\label{eq:bd1}
    \sup_{Y'\in \domain(R^*)} G(X,Y;X^*,Y')\leq \epsilon \implies F(X) + R(KX) \leq F(X^*) + \epsilon\leq F(\widetilde{X}) + \epsilon.
\end{equation}
In addition, using the convexity of $F$,
\begin{align}\label{eq:bd2}
   F(\widetilde{X}) - F(X) &\leq -\langle \nabla F(\widetilde{X}), (I-\Pi)X\rangle= -\sum_{s=1}^S \langle \nabla F(\widetilde{X}), \Pi_sX\rangle\nonumber\\
   &\leq \sum_{s=1}^S \|\Pi_s \nabla F(\widetilde{X})\|_*\cdot \|\Pi_sX\|\leq \sum_{s=1}^S a_s \cdot \|\Pi_sX\|.
\end{align}

For the first claim, since $\|K_sX\|_*\geq \|\Pi_sX\|\sigma_{\min}^+(K_s)$, with the first condition \eqref{eq:r_s_1} on $R_s$, we have
\begin{equation}\label{eq:bd3}
    R_s(K_sX) \geq (\xi + a_s)\cdot \|\Pi_sX\|.
\end{equation}
Combining the \eqref{eq:bd1}, \eqref{eq:bd2}, and \eqref{eq:bd3}, we get
\begin{displaymath}
    \xi\cdot \sum_{s=1}^S \|\Pi_sX\|\leq \epsilon \implies \|(I-\Pi) X\|=\|\sum_{s=1}^S \Pi_sX\|\leq \sum_{s=1}^S \|\Pi_sX\| \leq \epsilon/\xi.
\end{displaymath}
For the second claim, following a similar argument as above but with the second condition \eqref{eq:r_s_2} on $R_s$, we get
\begin{equation}\label{eq:bd3-1}
    \sum_{s=1}^S a_s\cdot \|\Pi_s X\| \leq \epsilon/\xi.
\end{equation}
Thus, using \eqref{eq:bd1}, \eqref{eq:bd2}, and \eqref{eq:bd3-1}, we have
\begin{displaymath}
        F(\widetilde{X}) \leq F(X) + \epsilon/\xi \leq F(X^*) + \epsilon/\xi + \epsilon.
\end{displaymath}
\end{proof}

We would like to point out that in \eqref{eq:bd2}, $\langle \nabla F(\widetilde{X}),\Pi_s X\rangle$ is upper bounded using $\| \nabla F(\widetilde{X})\|_*\cdot \|\Pi_s X\|$. A tighter upper bound could be obtained if one has more information about the set $\mc G_s:=\{\Pi_s \nabla F(X'),~X'\in {\mc X},~KX' = \mb 0\}$. Indeed, $\langle \nabla F(\widetilde{X}),\Pi_s X\rangle\leq \sup_{G_s\in \mc G_s} \langle G_s,\Pi_s X\rangle$, and so the inner product can be bounded using the support function of the set $\mc G_s$.  

\subsubsection{Function similarity for general convex functions}\label{sec:func_simi}
The terms $\{a_s\}_{s\in [S]}$ in Lemma \ref{lm:delta-subopt} can be viewed as a ``decomposition'' of the function variation into different subspaces spanned by (the row spaces of) $\{K_s\}_{s\in [S]}$. To be more concrete, consider the hierarchical setting presented in Section \ref{sec:agents}, which satisfies exactly the conditions in Lemma \ref{lm:delta-subopt} due to Lemma \ref{lm:K_prop}. Defining $\mu_s(i) = \frac{|\Descendant(i)|}{|\Descendant(s)|}$ for $i\in \child(s)$ as a probability measure, and assuming that all norms are $l_2$ norms, then by Lemma \ref{lm:K_prop}, 
\begin{equation}\label{eq:var_interp}
    \|\Pi_s \nabla F\|_*^2 = |\Descendant(s)|\cdot\var_{i\sim \mu_s}(\overline{f'}_i),\quad \overline{f'}_i = \frac{\sum_{j\in \Descendant(i)}f'_j}{|\Descendant(i)|},~i\in \child(s),
\end{equation}
where for a random vector $V$, we denote $\var(V):= \E[\|V-\E[V']\|_*^2]$. Thus, $\|\Pi_s \nabla F\|_*$ measures the function variation among the \textit{descendants of different child nodes} of $\agent(y_s)$, i.e. among $\left\{\sum_{j\in \Descendant(i)} f'_j\right\}_{i\in \child(s)}$. As a result, the agents closer to the root of the tree, with more descendants, take care of function variation at \textit{larger scales}, but at \textit{lower resolution}, since for all $i\in \child(s)$, the variation inside $\left\{ f'_j(x)\right\}_{j\in \Descendant(i)}$ has been taken care of by the dual agents in each sub-tree rooted at $i$.

For general but still orthogonal $\{K_s\}_{s\in [S]}$, $a_s$ measures the function variation along the span of $K_s$. With this interpretation in mind, we make the following definition regarding function similarity.
\begin{definition}\label{def:func_simi}
Assume that for all $s\neq s'\in [S]$, $K_sK_{s'}^* = \mb 0$. We say that the set of functions $\{f_v\}_{v\in V}$ is $\{(a_s,K_s)\}_{s\in [S]} $-similar if there exists a subgradient oracle $\nabla F:{\mc X}\to \R^{d}$, i.e. $(\nabla F(X))_v\in \partial f_v(x_v))$, such that for each $s\in [S]$, 
    \begin{displaymath}
        \Pi_s = K_s^*(K_sK_s^*)^{\dagger}K_s,\quad a_s \geq \sup_{X'\in \overline{\mc X},~KX' = \mb 0} \|\Pi_s\nabla F(X')\|_*.
    \end{displaymath}
If $S=1$ and $\Pi_1 = I-\Pi$, we abbreviate $\{(a_1,K_1)\}$-similar as $a_1$-similar. 
\end{definition}
For instance, if $S = 1$ and all norms are $l_2$ norms, then Assumption \ref{assumption:kernel} requires that $\Pi_1 = I-\Pi$, and one can take $a_1$ as 
\begin{displaymath}
    a_1^2 \geq \sup_{x\in \overline{\mc X}} \sum_{v\in V}\|f'_v(x) - \frac{1}{m}\sum_{v'\in V} f_{v'}'(x)\|^2.
    \end{displaymath}
Thus, if $\|f_v'(x)\|\leq M_f$ for all $v\in V,x\in \mc X$, we can also take $a_1= 2\sqrt{m}M_f$.

\textbf{Comparisons with existing notions of function similarity.} \cite{pmlr-v119-karimireddy20a} proposes the \textit{bounded gradient dissimilarity} for differentiable convex objectives, which coincides with our Definition \ref{def:func_simi} when $S=1$ and when the objectives are differentiable. For twice differentiable objectives, function similarity is also defined in terms of differences in Hessians, i.e. $\|\nabla^2 f_v -\nabla^2 f_{v'}\|$ \cite{pmlr-v151-tian22b,konecny2016federatedoptimizationdistributedmachine,ArjevaniShamir2015Communication,pmlr-v119-karimireddy20a}. For general convex functions which could be non-differentiable, \cite{ArjevaniShamir2015Communication} informally defines it (\textit{$\delta$-relatedness} in their terminology) as the condition that ``subgradients of local functions are at most $\delta$-different from each other''. Our Definition \ref{def:func_simi} formalizes this idea, and extend it to the case where $S>1$.

\subsubsection{Requirements on $R_s$ without orthogonality}
The above Lemma \ref{lm:delta-subopt} imposes orthogonality assumptions on $\{K_s\}_{s\in [S]}$. In the more general case where such assumptions do not hold, one can always view \eqref{eq:objective_pd_block} as a problem with only $1$ block, with $K$ and $R$ as the corresponding operator and regularization. Applying Lemma \ref{lm:delta-subopt}, we get the following corollary. 

\begin{corollary}\label{cor:delta_subopt}
Denoting $\widehat{a}_1 \geq \sup_{X'\in {\mc X},~KX'=\mb 0} \|(I-\Pi)\nabla F(X')\|_*$
    where $\nabla F:{\mc X}\to \R^{d}$ is an arbitrary subgradient oracle, i.e. $(\nabla F(X))_v\in \partial f_v(x_v)$. If $\sup_{Y'\in \domain(R^*)} G(X,Y;X^*,Y')\leq \epsilon$, 
    \begin{enumerate}
        \item $X$ is an $(\epsilon,\epsilon/\xi)$-solution if for each $s\in [S]$,
    \begin{equation}\label{eq:r_1}
        R_s(y_s) \geq \widehat{R}^{ccv}_s(y_s):=\frac{\xi + \widehat{a}_1 }{\sigma_{\min}^+(K)}\|y_s\|_*,
    \end{equation}
    \item  assume that $\widehat{a}_1>0$, then the projected solution $\Pi X$ is an $(\epsilon(1+1/\xi),0)$-solution if for each $s\in [S]$, 
    \begin{equation}\label{eq:r_2}
        R_s(y_s) \geq \widehat{R}^{prj}_s(y_s):=\frac{(1+\xi) \widehat{a}_1 }{\sigma_{\min}^+(K)}\|y_s\|_*.
    \end{equation}
    \end{enumerate}

\end{corollary}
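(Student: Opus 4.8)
The plan is to obtain Corollary \ref{cor:delta_subopt} as a direct specialization of Lemma \ref{lm:delta-subopt}, applied to the ``collapsed'' problem with a single dual block. Concretely, I would set $\widetilde{S}=1$, $\widetilde{K}_1 = K$ (so $\widetilde{n}_1 = n$), and $\widetilde{R}_1 = R = \sum_{s=1}^S R_s$. Since there is only one block, the orthogonality hypothesis ``$K_s K_{s'}^* = \mb 0$ for $s\neq s'$'' in Lemma \ref{lm:delta-subopt} is vacuously satisfied. The projection associated with this single block is $\widetilde{\Pi}_1 = K^*(KK^*)^\dagger K = I-\Pi$ by Assumption \ref{assumption:kernel}, and the corresponding ``variation'' bound $\widetilde a_1$ is exactly $\widehat a_1 \geq \sup_{X'\in\overline{\mc X},\,KX'=\mb 0}\|(I-\Pi)\nabla F(X')\|_*$, matching the hypothesis of Lemma \ref{lm:delta-subopt} with $a_1$ replaced by $\widehat a_1$ and $\sigma_{\min}^+(K_1)$ replaced by $\sigma_{\min}^+(K)$.

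Next I would check that the duality gap hypothesis transfers. The gap $G$ in \eqref{eq:def_gap} depends on $R$ only through $R^*$ and on the operators only through $K$; since $\widetilde R_1 = R$ and $\widetilde K_1 = K$, the gap function for the collapsed problem is literally the same function $G$, and $\domain(\widetilde R_1^*) = \domain(R^*)$. Hence the assumption $\sup_{Y'\in\domain(R^*)} G(X,Y;X^*,Y')\leq \epsilon$ is exactly the hypothesis needed to invoke Lemma \ref{lm:delta-subopt} in the collapsed setting. Applying part 1 of Lemma \ref{lm:delta-subopt} then gives: $X$ is an $(\epsilon,\epsilon/\xi)$-solution provided $\widetilde R_1(\widetilde K_1 X) = R(KX) \geq \frac{\xi+\widehat a_1}{\sigma_{\min}^+(K)}\|KX\|_*$, i.e. provided $R(y)\geq \frac{\xi+\widehat a_1}{\sigma_{\min}^+(K)}\|y\|_*$ for all $y\in\R^n$; and part 2 similarly gives the $(\epsilon(1+1/\xi),0)$-guarantee for $\Pi X$ under $R(y)\geq \frac{(1+\xi)\widehat a_1}{\sigma_{\min}^+(K)}\|y\|_*$ when $\widehat a_1>0$.

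The last step is to translate these conditions on $R$ into the per-block conditions \eqref{eq:r_1} and \eqref{eq:r_2} stated in the corollary. Using the block-separable form $R(y_1,\dots,y_S) = \sum_{s=1}^S R_s(y_s)$ and the fact that $R_s\geq 0$ (Assumption \ref{assumption:conjugate}, part 2), if each $R_s(y_s)\geq c\|y_s\|_*$ then $R(Y) = \sum_s R_s(y_s)\geq c\sum_s \|y_s\|_* \geq c\,\|Y\|_*$ for the extended dual norm (here using that $\|Y\|_*^2 = \sum_s\|y_s\|_{*}^2$, so $\|Y\|_*\leq \sum_s\|y_s\|_*$). Taking $c = \frac{\xi+\widehat a_1}{\sigma_{\min}^+(K)}$ yields \eqref{eq:r_1}, and $c = \frac{(1+\xi)\widehat a_1}{\sigma_{\min}^+(K)}$ yields \eqref{eq:r_2}.

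I do not anticipate a serious obstacle — the corollary is essentially a bookkeeping reduction — but the one point requiring care is the norm inequality relating $R$ evaluated ``blockwise'' to $R$ evaluated at the concatenated vector: one must make sure the $\ell_2$-aggregated extended dual norm on $\R^n$ indeed satisfies $\|Y\|_*\leq\sum_{s}\|y_s\|_*$ so that the per-block lower bounds \eqref{eq:r_1}--\eqref{eq:r_2} imply the aggregate lower bound needed in Lemma \ref{lm:delta-subopt}; this is immediate but should be stated. A secondary subtlety is confirming that $\sigma_{\min}^+(K)$ as used here (defined via $\widetilde\Pi_1 = I-\Pi$ in the denominator) is consistent with the general definition $\sigma_{\min}^+(K_s) = \min_{\Pi_s X\neq\mb 0}\|K_s X\|_*/\|\Pi_s X\|$ applied with $K_s\leftarrow K$, which it is by construction.
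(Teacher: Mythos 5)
Your proposal is correct and matches the paper's intended argument exactly: the paper proves Corollary \ref{cor:delta_subopt} by collapsing \eqref{eq:objective_pd_block} to a single block ($\widetilde{K}_1 = K$, $\widetilde{R}_1 = R$, $\widetilde{\Pi}_1 = K^*(KK^*)^{\dagger}K = I-\Pi$) and invoking Lemma \ref{lm:delta-subopt} with $S=1$. Your only addition is making explicit the bookkeeping step that the per-block lower bounds \eqref{eq:r_1}--\eqref{eq:r_2} imply the collapsed lower bound $R(Y)\geq c\|Y\|_*$ via $\|Y\|_*\leq\sum_s\|y_s\|_*$ under the $\ell_2$-aggregated dual norm, which is a genuine (if small) detail that the paper leaves implicit.
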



\textbf{Comparisons with \cite{lan_communication-efficient_2020} when $S=1$.} Assume that $\widehat{R}^{prj}_1$ in \eqref{eq:r_2} is used for some constant $\xi>0$ and $\widehat{a}_1 = 2\sqrt{m}M_f$, where $M_f$ (defined below) is an upper bound on the norm of \textit{the subgradient oracle} $f'_v\in \partial f_v$ (i.e. only one subgradient in the subdifferential for each $x\in \overline{\mc X}$, $v\in V$). Then, the diameter of $\domain(R^*)$ is $O(\frac{\sqrt{m}M_f}{\sigma^+_{\min}(K)})$. In \cite{lan_communication-efficient_2020}, it is shown that for \eqref{eq:objective_pd_block} with $R$ being the characteristic function of $\{\mb 0\}$, there exists an optimal dual solution $\|Y^*\|\leq \frac{\sqrt{m}\widehat{M}_f}{\sigma^+_{\min}(K)}$, where $\widehat{M}_f$ is an upper bound on the norms of \textit{all subgradients} $g\in \partial f_v$:
\begin{displaymath}
    \widehat{M}_f := \sup_{x\in \overline{\mc X},~v\in V,~g\in \partial f_v(x)} \|g\|_* \geq  M_f:=\sup_{x\in \overline{\mc X},~v\in V} \|f'_v(x)\|_*.
\end{displaymath}
Thus, even without function similarity, our $\widehat{R}^{prj}_1$ provides better control over the dual variables, leading to faster convergence.

\subsection{Applying (accelerated) multi-timescale PDHG to distributed optimization}\label{sec:do-pdhg}
In this section, we present the distributed implementation of Algorithm \ref{alg:main-algo} to the primal-dual formulation of the problem \eqref{eq:objective_F}. We first point it out that if the distance generating function $w_X$ in the lifted space is separable, i.e. $w_X(X) = \sum_{v\in V}w_{x_v}(x_v)$, then the Bregman divergence is also separable, i.e. $D(X,X') = \sum_{v\in V} D(x_v,x'_v)$. Now revising the updates \eqref{eq:pdhg-x} and \eqref{eq:pdhg-x-extra} as well as our proposed multi-timescale updates \eqref{eq:update_x_tilde}, \eqref{eq:primal-md-obj}, and the approximation using the generalized communication sliding \eqref{eq:update-x-cs}, we see that they are all decomposable w.r.t. the primal agents, and can be implemented locally by each $\agent(x_v)$ without any communication. More precisely, defining 
\begin{equation}\label{eq:primal-md-obj-decompose}
    \phi_v^k(x_v):=f_v(x_v) + \langle \sum_{s=1}^SK_{s,v}^* \overline{y}_s^k,x_v\rangle + \sum_{s=1}^S \eta_{k,s}D(x_v,x_v^{k-r_s}), 
\end{equation}
then we require that 
\begin{equation}\label{eq:primal-approx-prop-decompose}
    \phi_v^k(\widehat{x}_v^k) \leq \phi_v^k(x_v) -(\frac{\mu_f}{C} +\eta_k)D(x_v,x_v^k)+ \delta_{k,v}(x_v),\quad \forall x_v\in \overline{\mc X},
\end{equation}
which can be achieved through the generalized communication sliding procedure applied to each $x_v$ locally by $\agent(x_v)$. Thus, the only communication needed for the primal dual updates is to make sure the following two conditions are met: 
\begin{itemize}
    \item the dual $\agent(y_s)$ knows $K_s\widetilde{X}_s^{i_s}$ at iteration $k=r_si_s$, which can be realized if at the beginning of iteration $k=r_si_s$, each primal agent $v$ computes $\widetilde{x}_{v}^{i_s}$ and send it to the dual agent $y_s$, then the dual agent computes $\sum_{v\in V} K_{s,v}\widetilde{x}_{v}^{i_s}$;
    \item the primal $\agent(x_v)$ knows $\sum_{s=1}^S K^*_{s,v} \overline{y}_s^k$ at iteration $k$; this can be achieved if after each dual update, the dual agent $y_s$ sends $y_{s}^{i_s} - y_s^{i_s-1}$ to all primal agents, then the primal agents can update $\sum_{s=1}^S K^*_{s,v} \overline{y}_s^k$ using $K^*_{s,v}(y_{s}^{i_s} - y_s^{i_s-1})
    $.
\end{itemize}

We provide such implementation in Algorithm \ref{alg:main-algo-distributed}, and explicitly mark the steps which require communication in green. Here, we provide few remarks. Notice that in Algorithm \ref{alg:main-algo}, $\agent(y_s)$ calculates $\sum_{v\in V} K_{s,v}\widetilde{x}_{s,v}^{i_s}$ and sends $y_s^{i_s} - y_s^{i_s-1}$, a vector in $\R^{n_s}$, $\agent(x_v)$ calculates $K_{s,v}^*(\overline{y}_s^{k} -\overline{y}_s^{k-1}) $ and sends $\widetilde{x}_{s,v}^{i_s}$,
a vector in $\R^{\overline{d}}$. In fact, there are many task assignment strategies: for instance, $K_{s,v}\widetilde{x}_{s,v}^{i_s}$ can also be computed by $\agent(x_v)$, and the message from $\agent(x_v)$ to $\agent(y_s)$ will be $K_{s,v}\widetilde{x}_{s,v}^{i_s}$. This is preferable if $\agent(x_v)$ can compute matrix-vector products faster/at lower cost than $\agent(y_s)$. Due to this variability, in the cost analysis below, we take a ``modular'' perspective and assume that the cost of updating $y_s$ (including all matrix-vector multiplication and communication) is $c_s$.

\begin{algorithm}
\caption{(Accelerated) Multi-timescale PDHG for distributed optimization}\label{alg:main-algo-distributed}
\begin{algorithmic}
\Require $\{\alpha_{s,i_s}\},\{\theta_k\},\{\eta_{k,s}\},\{\tau_{s,i_s}\},\{r_s\},X^{init},Y^{init}$
\Ensure Primal dual pair $Z^N$
\State Initialize $(X^{k'},\widehat{X}^{k'},Y^{k'})\gets (X^{init},X^{init},Y^{init})$ for all $k'< 0$
\For{$k = 0,1,\ldots,N$}\Comment{implicitly $i_s = \lfloor k/r_s\rfloor$ for all $s\in [S]$}
\For{$s\in [S]$ such that $k = 0 ~(\text{ mod } r_s)$}
\For{ $v\in V$ such that $K_{s,v}\neq \mb 0$}
\State $\agent(x_v)$ computes $\widetilde{X}_{s}^{i_s}$ using \eqref{eq:update_x_tilde} and {\color{teal}\underline{sends}} it to $\agent(y_s)$
\EndFor
\State {\color{blue}{Dual update:}} $\agent(y_s)$ computes $\sum_{v\in V} K_{s,v}\widetilde{x}_{v}^{i_s}$, then updates $y_s^{i_s}$ using \eqref{eq:update_y} 
\State $\agent(y_s)$ {\color{teal}\underline{sends}} $y_s^{i_s}-y_s^{i_s-1}$ ($y_s^0$ if $i_s=0$)
to $\agent(x_v)$ for all $v\in V$ such that $K_{s,v}\neq \mb 0$
\EndFor
\For{$v\in V$}
\State {\color{red}{Primal update:}} $\agent(x_v)$ computes $K^*\overline{Y}^k$ where $\overline{y}_s^k = y_s^{\lfloor k/r_s\rfloor}$ for $s\in [S]$, updates $(x_v^k,\widehat{x}_v^k)$ satisfying \eqref{eq:primal-approx-prop-decompose} 
\EndFor 
\EndFor
\State All $\agent(x_v)$ and $\agent(y_s)$ computes their components of $Z^N$ using \eqref{eq:ergodic}. 
\end{algorithmic}
\end{algorithm}

\subsection{Convergence for general convex objectives}\label{sec:do-convergence}

With additional assumptions specific to distributed optimization, and with proper choices of $R_s$'s, the duality gap for \eqref{eq:objective_F} can be related to the suboptimality in terms of objective values $F$ and/or violation of the consensus constraint for the original problem \eqref{eq:objective_f}. Next, we establish such connection.

\begin{corollary}\label{cor:main1}
Assume that all norms are the $l_2$ norm, and take $y_s^0 = \mb 0$, $w_{y_s}(y_s) = \frac{1}{2}\|y_s\|^2$, $w_x(x) = \frac{1}{2}\|x\|^2$, and $w_X(X) = \sum_{v\in V} w_x(x_v)$. Assume that conditions of Corollary \ref{cor:main} hold, $D(X^*,X^{init})\leq D^X$, and the following holds for $A$ specified below
\begin{displaymath}
     N \geq \frac{2\sqrt{3}\overline{r}A \sqrt{D^X}}{\epsilon }.
\end{displaymath}
\begin{enumerate}
    \item If $\{f_v\}_{v\in V}$ is $\widehat{a}_1$-similar, take $\rho_s =\frac{ \|K_s\|}{\sum_{s'=1}^S\|K_{s'}\|}$
    \begin{enumerate}
    \item $\frac{1}{N+1}\sum_{k=0}^N \widehat{X}^k$ is an $(\epsilon,\epsilon/\xi)$-solution if $R_s=\widehat{R}_s^{ccv}$ as defined in \eqref{eq:r_1} (then $\sqrt{2D_s^y}=\frac{\xi + \widehat{a}_1}{\sigma_{\min}^+(K)}$) and $A = \frac{(\sum_{s=1}^S\|K_s\|)}{\sigma_{\min}^+(K)}\cdot (\xi + \widehat{a}_1)$; 
\item $\Pi(\frac{1}{N+1}\sum_{k=0}^N \widehat{X}^k)$ is an $(\epsilon(1+1/\xi),0)$-solution if $R_s = \widehat{R}_s^{prj}$ as defined in \eqref{eq:r_2} (then $ \sqrt{2D_s^y}=\frac{(1+\xi) \widehat{a}_1}{\sigma_{\min}^+(K)}$) and $A= \frac{(1+\xi) (\sum_{s=1}^S \|K_s\|)\cdot \widehat{a}_1}{\sigma_{\min}^+(K)}$.
\end{enumerate}
\item If $\{f_v\}_{v\in V}$ is $\{(a_s,K_s)\}_{s\in [S]}$-similar, 
\begin{enumerate}
    \item $\frac{1}{N+1}\sum_{k=0}^N \widehat{X}^k$ is an $(\epsilon,\epsilon/\xi)$-solution if $R_s = R_s^{ccv}$ as defined in \eqref{eq:r_s_1} (then $\sqrt{2D_s^y}=\frac{\xi + a_s}{\sigma_{\min}^+(K_s)}$), $\rho_s = (\frac{\xi + a_s}{\sigma^+_{\min}(K_s)})/(\sum_{s'=1}^S \frac{\xi + a_{s'}}{\sigma^+_{\min}(K_{s'})})$, and $A=\sum_{s=1}^S (\xi + a_s)\cdot \frac{\|K_s\|}{\sigma_{\min}^+(K_s)}$; 

\item $\Pi(\frac{1}{N+1}\sum_{k=0}^N \widehat{X}^k)$ is an $(\epsilon(1+1/\xi),0)$-solution if $R_s = R_s^{prj}$ satisfies \eqref{eq:r_s_2} (then $\sqrt{2D_s^y}=\frac{(1+\xi )a_s}{\sigma_{\min}^+(K_s)}$), $\rho_s = (\frac{ a_s}{\sigma^+_{\min}(K_s)})/(\sum_{s'=1}^S \frac{ a_{s'}}{\sigma^+_{\min}(K_{s'})})$, and $A=(1+\xi)(\sum_{s=1}^S a_s\cdot \frac{\|K_s\|}{\sigma_{\min}^+(K_s)})$. 
\end{enumerate}
\end{enumerate}

\end{corollary}

Thus, the communication round $N$ depends on $\overline{r}$, the weighted average of the rates at which the duals are updated, as well as $A$, which measures the function similarities.  

\textbf{Bounds using the Lipschitz constants.}
Consider the case where $\|f'_v\|\leq M_f$, and to guarantee that $\frac{1}{N+1}\sum_{k=0}^N \widehat{X}^k$ is an $(\epsilon,\epsilon/\xi)$-solution, in Corollary \ref{cor:main1}, with $\{f_v\}_{v\in V}$ $\widehat{a}_1$-similar, we can take $\xi = \widehat{a}_1= 2\sqrt{m}M_f$ which gives the following $N_1$, and with $\{f_v\}_{v\in V}$ $\{(a_s,K_s)\}_{s\in [S]}$-similar, we can take $\xi = a_s = 2\sqrt{m}M_f$ for all $s\in [S]$, which gives the following $N_2$:
\begin{displaymath}
    N_1 = O(\frac{\overline{r}  M_f \sqrt{mD^X}}{\epsilon }\cdot \frac{\sum_{s=1}^S \|K_s\|}{\sigma_{\min}^+(K)}),\quad N_2 =O( \frac{\overline{r} M_f\sqrt{mD^X}}{\epsilon }\cdot (\sum_{s=1}^S \frac{\|K_s\|}{\sigma_{\min}^+(K_s)})).
\end{displaymath}

Both $N_1$ and $N_2$ depend linearly in $\overline{r}$. However, when $\{K_s\}_{s\in [S]}$ are orthogonal, as discussed in Section \ref{sec:two_measures}, 
$\sigma_{\min}^+(K)\leq \sigma_{\min}^+(K_s)$ for all $s$, and so in terms of the rounds of communication $N$, it appears that orthogonality allows a more refined (i.e. $s$-dependent) control over the decomposition of the function variation and thus the dual domain size, thereby achieving better convergence. In addition, similar to the argument in Section \ref{sec:convergence-convex}, when the cost of updating $y_s$ is $c_s$ and total cost is additive, one should choose $r_s \propto \sqrt{c_s/\|K_s\|}$ under $\widehat{a}_1$-similarity, and $r_s\propto \sqrt{c_s/(\|K_s\|/\sigma_{\min}^+(K_s))}$ under $\{(a_s,K_s)\}_{s\in [S]}$-similarity. Similar results hold for $\Pi(\frac{1}{N+1}\sum_{k=0}^N \widehat{X}^k) $ to be an $(\epsilon,0)$-solution.

\textbf{Bounds using the function similarity.}
In reality, sometimes the functions $\{f_v\}_{v\in V}$ exhibit similarity. For instance, in the extreme case $f_v = f_{v'}$ for all $v,v'\in V$, and thus communication is not needed at all! In that case, the bound on $\sup_{\widetilde{X}\in {\mc X},~K\widetilde{X}=\mb 0}\|(I-\Pi) \nabla F(\widetilde{X})\|$ (and other terms using $\Pi_s$) using the Lipschitz constant $M_f$ is too loose: in fact, one can choose $\widehat{a}_1 = a_s=\epsilon_0$ for all $s$ for arbitrarily small $\epsilon_0>0$, then when $R_s$ are set according to \eqref{eq:r_2} or \eqref{eq:r_s_2} with constant $\xi$, one only needs $N = O(\frac{\epsilon_0}{\epsilon}\sum_{s=1}^S r_s)$, which can be arbitrarily small. 

More generally, choosing $\xi=1$ and setting $R_s$ according to \eqref{eq:r_2} or \eqref{eq:r_s_2}, we obtain the following bound on the rounds of communication following under $\widehat{a}_1$-similarity ($N_3$) and $\{(a_s,K_s)\}_{s\in [S]}$-similarity ($N_4$):
\begin{displaymath}
    N_3 =O( \frac{\overline{r} \sqrt{D^X}}{\epsilon}\cdot \frac{\sum_{s=1}^S \|K_s\|}{\sigma_{\min}^+(K)}\cdot \widehat{a}_1),\quad N_4=O( \frac{\overline{r}\cdot \sqrt{D^X}}{\epsilon }\cdot (\sum_{s=1}^S \frac{a_s\|K_s\|}{\sigma_{\min}^+(K_s)})).
\end{displaymath}

 Importantly, the number of rounds needed now depends on the \textit{function similarity} instead of crude quantities such as Lipschitz constants. 

 In fact, when $S=1$, such dependency is optimal. Indeed, \cite{ArjevaniShamir2015Communication} designs a pair of ``chain like'' functions $\{F_1,F_2\}$, such that for any $\gamma\geq 0$, $\{\gamma F_1,\gamma F_2\}$ is $\sqrt{1.5}\gamma$-similar. In addition, when $m/2$ agents are given $\gamma F_1$ and the rest are give $\gamma F_2$, finding an $\epsilon$ suboptimal $x$ (in terms of the objective value) in the $l_2$ unit ball requires $\Omega(\frac{\gamma}{\epsilon/m})$ rounds of communication (see Theorem 2 and the discussions after it in \cite{ArjevaniShamir2015Communication}). For our algorithm, with $\widehat{a}_1 = O(\sqrt{m}\gamma)$, $D^x = 1/2$, and 
 $K = I-\Pi$ (and so $\|K\| = \sigma_{\min}^+(K)$), we have $N_3 = O(\frac{r_1\gamma}{\epsilon/m})$. Thus, $y_1$ is updated only $N_3/r_1=O(\frac{\gamma }{\epsilon/m})$ times, which is also the number of actual communication rounds needed. This achieves the theoretical lower bound, and so is optimal.


\textbf{The hierarchical setting and function similarity at different scales.}
In addition, we provide results when function variations could be different along the span of $K_s$ for different $s\in [S]$. As an example, consider the hierarchical setting discussed in Section \ref{sec:agents}, with the additional assumption that for each non-leaf layer of the tree, all dual variables in that layer have the same number of child nodes. Then it can be shown that $\|K_s\|=\sigma_{\min}^+(K_s) = \sqrt{|\child(s)|/|\Descendant(s)|}$ (by \eqref{eq:K_prod} in the proof of Lemma \ref{lm:K_prop}), so the above bound $N_4$ can be simplified as
\begin{displaymath}
    N_4'=O( \frac{\overline{r}\cdot (\sum_{s=1}^S a_s)\cdot \sqrt{D^X}}{\epsilon }),\quad \rho_s\propto a_s\sqrt{|\Descendant(s)|}.
\end{displaymath}
As discussed in Section \ref{sec:func_simi}, $a_s$ measures the function variation along the span of $K_s$, i.e. variation in $\{f_v\}_{v\in \Descendant(s)}$ \textit{not taken care of} by $\agent(y_{s'})$ in the subtree rooted at $\agent(y_s)$. In addition, \eqref{eq:var_interp} shows that $a_s^2 =  |\Descendant(s)| \cdot \sup_{x\in \mc X}\var_{i\sim \mu_s}(\overline{f'}_i(x))$, and so $\rho_s\propto |\Descendant(s)| \cdot \sqrt{\sup_{x\in \mc X}\var_{i\sim \mu_s}(\overline{f'}_i(x))}$. 

Thus, from the cost-minimization perspective in the discussion in Section \ref{sec:convergence-convex}, denoting the cost of updating $y_s$ as $c_s$, one should choose $r_s\propto\sqrt{\frac{c_s/|\Descendant(s)|}{\sqrt{\sup_{x\in \mc X}\var_{i\sim \mu_s}(\overline{f'}_i(x))}}}$. This corroborates the intuition that if along some $K_s$ the function does not vary by too much ($\var_{i\sim \mu_s}(\overline{f'}_i)$ is small), then $\agent(y_s)$ does not need to update $y_s$ very frequently (can use larger $r_s$).

\subsection{Convergence for strongly convex objectives}\label{sec:do-convergence-acc}

\textbf{Good initialization for \eqref{eq:objective_F}}. 
In Corollary \ref{cor:acc_convergence}, assuming that $\mc X$ is compact, then one can always use $D^X \geq \sup_{X\in {\mc X}} D_{w^X}(X,X^{init})$, suggesting that $X^{init}$ should be chosen as the ``center'' of ${\mc X}$, and $D^X$ measures the (squared) radius of ${\mc X}$. The resulting $N$, then, depends on $D^X$. However, such dependence on the size of ${\mc X}$ could be suboptimal, especially when local objectives are similar. Indeed, in the extreme case where all local functions are the same, then primal agents can optimize their local objectives without communication at all.

To take advantage of potential similarities in the local functions, we propose initializing the primal variables at (approximate) local optimal solutions, which has the following guarantee on $D(X^*,X^{init})$.
\begin{lemma}\label{lm:good_init}
   Assume that all norms are the $l_2$ norm, and for some $\epsilon_0\geq 0$, $\widehat{X} = (\widehat{x}_v)_{v\in V} \in {\mc X}$ satisfies the following condition
      \begin{displaymath}
       F(\widehat{X}) \leq \min_{X\in {\mc X}} F(X) + \epsilon_0.
   \end{displaymath}
   Assume that \eqref{eq:prop-f} holds for some $\mu>0$ and $\{f_v\}_{v\in V}$ is $\{(a_s,K_s)\}_{s\in [S]}$-similar, then 
   \begin{displaymath}
    \|\widehat{X} - X^*\| \leq \frac{(\sum_{s=1}^S a_s^2)^{1/2}}{\mu} + \sqrt{\frac{\sum_{s=1}^S a_s^2}{\mu^2} + \frac{2\epsilon_0}{\mu}}.
\end{displaymath}
\end{lemma}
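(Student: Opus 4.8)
The plan is to reduce the claim to the scalar quadratic inequality
\begin{displaymath}
\tfrac{\mu}{2}\|\widehat{X}-X^*\|^2 \;\le\; A\,\|\widehat{X}-X^*\| + \epsilon_0,\qquad A:=\Big(\sum_{s=1}^S a_s^2\Big)^{1/2},
\end{displaymath}
since the larger root of $\tfrac{\mu}{2}t^2-At-\epsilon_0=0$ is exactly $\tfrac{A}{\mu}+\sqrt{\tfrac{A^2}{\mu^2}+\tfrac{2\epsilon_0}{\mu}}$, so this inequality forces $t=\|\widehat X-X^*\|$ below that value. To establish it I would decompose orthogonally (all norms are $l_2$, so $\Pi$ and $I-\Pi$ are complementary orthogonal projections and $(I-\Pi)X^*=\mb 0$):
\begin{displaymath}
\|\widehat X-X^*\|^2 \;=\; \|(I-\Pi)\widehat X\|^2 + \|\Pi\widehat X-X^*\|^2 \;=:\; p^2+q^2 .
\end{displaymath}
Writing $\bar x:=\tfrac1m\sum_{v\in V}\widehat x_v\in\mc X$, we have $\Pi\widehat X=(\bar x,\dots,\bar x)\in\overline{\mc X}$ and $q^2=m\|\bar x-x^*\|^2$.

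To bound $p$, apply $\mu$-strong convexity of $F$ (which holds with respect to the extended norm, since $F$ is separable) between $\widehat X$ and the consensus point $\Pi\widehat X$, using the subgradient $\nabla F(\Pi\widehat X)$ supplied by the oracle witnessing $\{(a_s,K_s)\}$-similarity. Since $\widehat X-\Pi\widehat X=(I-\Pi)\widehat X$ and $I-\Pi$ is a symmetric idempotent, this gives
\begin{displaymath}
\tfrac{\mu}{2}p^2 \;\le\; F(\widehat X)-F(\Pi\widehat X) - \big\langle (I-\Pi)\nabla F(\Pi\widehat X),\,(I-\Pi)\widehat X\big\rangle .
\end{displaymath}
Because $\Pi\widehat X$ is a consensus point, $K(\Pi\widehat X)=\mb 0$; and $K_sK_{s'}^*=\mb 0$ makes the vectors $\Pi_s\nabla F(\Pi\widehat X)$ pairwise orthogonal with $\sum_s\Pi_s=K^*(KK^*)^{\dagger}K=I-\Pi$, so $\|(I-\Pi)\nabla F(\Pi\widehat X)\|^2=\sum_s\|\Pi_s\nabla F(\Pi\widehat X)\|^2\le\sum_s a_s^2=A^2$. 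Cauchy--Schwarz then yields $\tfrac{\mu}{2}p^2\le \big(F(\widehat X)-F(\Pi\widehat X)\big)+A p$. For $q$: both $\Pi\widehat X$ and $X^*$ are consensus points, so on consensus points $F$ agrees with $h:=\sum_{v\in V}f_v$, which is $m\mu$-strongly convex; since $x^*$ minimizes $h$ over $\mc X$ there is $g^*\in\partial h(x^*)$ with $\langle g^*,x-x^*\rangle\ge 0$ for all $x\in\mc X$, and evaluating strong convexity at $\bar x\in\mc X$ gives $\tfrac{\mu}{2}q^2 = \tfrac{m\mu}{2}\|\bar x-x^*\|^2 \le h(\bar x)-h(x^*) = F(\Pi\widehat X)-F(X^*)$.

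Adding the two estimates, the $F(\Pi\widehat X)$ terms cancel, leaving $\tfrac{\mu}{2}(p^2+q^2)\le \big(F(\widehat X)-F(X^*)\big)+Ap$. Since $X^*\in\overline{\mc X}$ we have $F(\widehat X)\le\min_{X\in\overline{\mc X}}F(X)+\epsilon_0\le F(X^*)+\epsilon_0$, and $p\le\sqrt{p^2+q^2}=\|\widehat X-X^*\|$, so the target quadratic inequality follows and solving it finishes the proof. I expect the only genuine subtlety to be the choice of subgradients: one cannot simply apply strong convexity between $\widehat X$ and $X^*$ with the similarity oracle evaluated at $X^*$, because that subgradient need not certify the first-order optimality of $x^*$, so the $\Pi$-component of $\langle\nabla F(X^*),\widehat X-X^*\rangle$ cannot be discarded. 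Routing through the intermediary $\Pi\widehat X$ resolves this — the similarity bound is invoked only at the consensus point $\Pi\widehat X$, where the oracle is unconstrained, and the passage from $\Pi\widehat X$ to $X^*$ uses the optimality-certifying subgradient $g^*$ and needs no similarity assumption. The remaining steps (orthogonality of the $\Pi_s$, and the quadratic-root computation) are routine.
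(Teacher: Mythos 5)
Your proof is correct, and it takes a genuinely different route from the paper's. The paper argues directly between $\widehat X$ and $X^*$: it applies the strong-convexity inequality with the similarity-witnessing subgradient $\nabla F(X^*)$, discards the consensus component via $\langle \nabla F(X^*),\Pi(\widehat X-X^*)\rangle\ge 0$ (justified as ``the first-order optimality condition of $X^*$''), and then bounds $-\sum_s\langle \Pi_s\nabla F(X^*),\Pi_s(\widehat X-X^*)\rangle$ by Cauchy--Schwarz across blocks. Your decomposition $\|\widehat X-X^*\|^2=\|(I-\Pi)\widehat X\|^2+\|\Pi\widehat X-X^*\|^2$, with the similarity oracle invoked only at the consensus point $\Pi\widehat X$ and the optimality of $x^*$ invoked only for the consensus component, reaches the same quadratic inequality $\frac{\mu}{2}t^2\le (\sum_s a_s^2)^{1/2}\,t+\epsilon_0$ after the $F(\Pi\widehat X)$ terms telescope, and solving it gives the identical bound. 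The extra detour is not wasted: your closing remark pinpoints a real subtlety in the direct argument. The variational inequality $\langle g,\bar x-x^*\rangle\ge 0$ for $\bar x\in\mc X$ is guaranteed only for \emph{some} optimality-certifying subgradient of $\sum_v f_v$ at $x^*$, whereas the paper applies it to the particular subgradient assembled from the similarity oracle of Definition \ref{def:func_simi}, which need not be that one; so the paper's shorter proof implicitly constrains the choice of oracle at $X^*$ (and a certifying choice there must still satisfy $\|\Pi_s\nabla F(X^*)\|_*\le a_s$), while your version needs no such restriction. Both arguments ultimately rely on the same ingredients: orthogonality of the $\Pi_s$ with $\sum_s\Pi_s=I-\Pi$, block-wise Cauchy--Schwarz against $(\sum_s a_s^2)^{1/2}$, and the larger root of the resulting quadratic.
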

\begin{proof}[Proof of Lemma \ref{lm:good_init}]
    By the suboptimality condition for $\widehat{X}$ and \eqref{eq:prop-f}, we get
    \begin{displaymath}
        \frac{\mu}{2}\|\widehat{X} - X^*\|^2\leq F(\widehat{X}) - F(X^*) - \langle \nabla F(X^*),\widehat{X} - X^*\rangle \leq - \langle \nabla F(X^*),\widehat{X} - X^*\rangle + \epsilon_0. 
    \end{displaymath}
    Notice that by the first-order optimality condition of $X^*$, we get
    \begin{displaymath}
        \langle \nabla F(X^*), \Pi (\widehat{X} -X^*)\rangle \geq 0.
    \end{displaymath}
    Combining the above two results, we get 
    \begin{align*}
        \frac{\mu}{2}\|\widehat{X} - X^*\|^2&\leq - \langle \nabla F(X^*),(I-\Pi)(\widehat{X} - X^*) \rangle + \epsilon_0\\
        &=- \sum_{s=1}^S \langle \Pi_s\nabla F(X^*),\Pi_s(\widehat{X} - X^*)\rangle + \epsilon_0\\
        &\leq \sum_{s=1}^S \|\Pi_s\nabla F(X^*)\|_*\cdot \|\Pi_s(\widehat{X} - X^*) \| + \epsilon_0\\
        &\leq (\sum_{s=1}^S \|\Pi_s\nabla F(X^*)\|^2_*)^{1/2}\cdot (\sum_{s=1}^S\|\Pi_s(\widehat{X} - X^*) \|^2)^{1/2} + \epsilon_0\\
        &\leq (\sum_{s=1}^S a_s^2)^{1/2}\cdot \|\widehat{X} - X^*\| + \epsilon_0,
    \end{align*}
where the last $\leq$ is because of the assumption that $\{f_v\}_{v\in V}$ are $\{(a_s,K_s)\}_{s\in [S]}$-similar, and all norms are $l_2$ norm. The above inequality is quadratic in $\|\widehat{X} - X^*\|$, and the result follows. 
\end{proof}

The above Lemma \ref{lm:good_init} shows that if $\{\widehat{x}_v\}_{v\in V}$ are all approximately optimal to local objectives, then $D(X^*,\widehat{X})\sim \frac{\sum_{s=1}^S a_s^2}{\mu^2}$. To find such initialization, one can apply the GS procedure.

\begin{corollary}\label{cor:good_init_CS}
Assume that all norms are $l_2$ norms and $w_x(x)=  \frac{1}{2}\|x\|^2$, that \eqref{eq:prop-f} holds with some $\mu>0$. For each $v\in V$, assume that $\agent(x_v)$ is given some $\underline{x}_v^0\in \mc X$ such that $\sup_{x\in \mc X} D(x,\underline{x}_v^0)\leq \underline{D}^x<\infty$
    \begin{displaymath}
    (\_,x_v^{init}) = GS(f_v,\mc X, D,\underline{T},\underline{\eta},\mb 0,\underline{x}_v^0,\underline{x}_v^0), 
\end{displaymath}
where the GS procedure uses $\lambda_t,\beta_t$ according to Corollary \ref{cor:lambda_beta} (for $\mu>0$), then with $\epsilon_0 = \widetilde{a}^2/\mu$, $\underline{T} \geq \frac{8CM_f^2m}{\epsilon_0\mu}$ and $\underline{\eta} = \frac{\epsilon_0/2}{m\underline{D}^x}$
\begin{displaymath}
    D(X^*,X^{init}) \leq \frac{4\widetilde{a}^2}{\mu^2} ,
\end{displaymath}
where $\widetilde{a} = \widehat{a}_1$ if $\{f_v\}_{v\in V}$ is $\widehat{a}_1$-similar for some $\widehat{a}_1>0$, and $\widetilde{a} = (\sum_{s=1}^S a_s^2)^{1/2}$ if $\{f_v\}_{v\in V}$ is $\{(a_s,K_s)\}_{s\in [S]}$-similar such that $(\sum_{s=1}^S a_s^2)^{1/2}>0$. 
\end{corollary}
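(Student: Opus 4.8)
The plan is to prove this in two moves: first, use Corollary~\ref{cor:lambda_beta} to show that the output $X^{init} = (x_v^{init})_{v\in V}$ is an $\epsilon_0$-approximate minimizer of $F$ over $\overline{\mc X}$, and second, feed this into Lemma~\ref{lm:good_init} to convert near-optimality of $F$ into the claimed bound on $D_{w^X}(X^*,X^{init})$.

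For the first move, I would apply the strongly convex case of Corollary~\ref{cor:lambda_beta} to each agent's call $(\_,x_v^{init}) = CS(f_v,\mc X,D_{w^x},\underline T,\underline\eta,\mb 0,\underline x_v^0,\underline x_v^0)$ separately. Here the linear term is $v=\mb 0$, the index set $\mc I$ is a singleton with proximal center $\underline x_v^0$ and weight $\underline\eta$ (so $\eta = \underline\eta$ in the corollary), and $x^{init} = \underline x_v^0$; condition~\eqref{eq:mu_M} gives exactly the $(\mu,M)$-hypothesis on $f_v$, and $w^x(x) = \tfrac12\|x\|^2$ gives $D_{w^x}(x,x')\le\tfrac{C}{2}\|x-x'\|^2$ with $C=1$. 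Evaluating the corollary's inequality at $u = x_v^\ast\in\argmin_{x\in\mc X}f_v(x)$ and discarding the nonpositive terms $-\underline\eta D_{w^x}(x_v^{init},\underline x_v^0)$ and $-(\tfrac{\mu}{C}+\underline\eta)D_{w^x}(x_v^\ast,u^{\underline T})$, together with $D_{w^x}(x_v^\ast,\underline x_v^0)\le\underline D^x$, yields $f_v(x_v^{init}) - \min_{x\in\mc X}f_v(x) \le \underline\eta\,\underline D^x + \tfrac{4CM^2}{\mu(\underline T+1)}$. Summing over $v\in V$ and using $F(X) = \sum_v f_v(x_v)$ and $\min_{X\in\overline{\mc X}}F(X) = \sum_v\min_{x\in\mc X}f_v(x)$ (since $\overline{\mc X} = \mc X^V$), I get $F(X^{init}) - \min_{\overline{\mc X}}F \le m\underline\eta\,\underline D^x + \tfrac{4CM^2 m}{\mu(\underline T+1)}$. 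Plugging in $\underline\eta = \tfrac{\epsilon_0/2}{m\underline D^x}$ turns the first term into $\epsilon_0/2$, and $\underline T\ge\tfrac{8CM^2 m}{\epsilon_0\mu}$ bounds the second by $\tfrac{4CM^2 m}{\mu\underline T}\le\epsilon_0/2$, so $F(X^{init})\le\min_{\overline{\mc X}}F + \epsilon_0$.

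For the second move, I would apply Lemma~\ref{lm:good_init} with $\widehat X = X^{init}$ (which lies in $\overline{\mc X}$ since the CS iterates stay in $\mc X$) and this $\epsilon_0 = \widetilde a^2/\mu$, noting that $\sum_{s}a_s^2 = \widetilde a^2$ in both stated cases — the $\widehat a_1$-similar case being precisely the $S=1$, $\Pi_1 = I-\Pi$ instance of Definition~\ref{def:func_simi}. This gives $\|X^{init}-X^\ast\| \le \tfrac{\widetilde a}{\mu} + \sqrt{\tfrac{\widetilde a^2}{\mu^2} + \tfrac{2\epsilon_0}{\mu}} = \tfrac{\widetilde a}{\mu}(1+\sqrt 3)$, hence $D_{w^X}(X^\ast,X^{init}) = \tfrac12\|X^{init}-X^\ast\|^2 = (2+\sqrt 3)\tfrac{\widetilde a^2}{\mu^2} \le \tfrac{4\widetilde a^2}{\mu^2}$ since $2+\sqrt 3 < 4$.

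I do not expect a genuine obstacle; the points needing care are the correct instantiation of the generalized CS procedure with a singleton proximal-center set (so the roles of $\eta$, $\eta_i$ in Corollary~\ref{cor:lambda_beta} collapse to $\underline\eta$), the legitimacy of dropping the negative Bregman terms, and the final numerical check $\tfrac12(1+\sqrt 3)^2 = 2+\sqrt 3 \le 4$.
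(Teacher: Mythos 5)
Your proposal is correct and is essentially the proof the paper intends (the paper omits it as a routine combination of the two cited results). Both moves check out: the instantiation of the strongly convex branch of Corollary~\ref{cor:lambda_beta} with a singleton proximal-center set, $v=\mb 0$, $\eta=\underline\eta$, $C=1$ (since $w^x=\tfrac12\|\cdot\|^2$), followed by dropping the two nonpositive Bregman terms, correctly gives $f_v(x_v^{init})-\min_{\mc X}f_v\le\underline\eta\,\underline D^x+\frac{4CM^2}{\mu(\underline T+1)}\le\epsilon_0/m$ per agent; summing and using separability of $F$ over $\overline{\mc X}=\mc X^V$ gives $\epsilon_0$-suboptimality of $X^{init}$, after which Lemma~\ref{lm:good_init} with $\epsilon_0=\widetilde a^2/\mu$ (and $\widetilde a^2=\sum_s a_s^2$ in both cases, the $\widehat a_1$-similar case being $S=1$, $\Pi_1=I-\Pi$) yields $\|X^{init}-X^*\|\le(1+\sqrt3)\widetilde a/\mu$ and thus $D_{w^X}(X^*,X^{init})=(2+\sqrt3)\widetilde a^2/\mu^2\le 4\widetilde a^2/\mu^2$.
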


\textbf{Complexities for \eqref{eq:objective_f}}. Combining Theorem \ref{cor:main_acc}, Corollary \ref{cor:good_init_CS}, Lemma \ref{lm:delta-subopt}, and Corollary \ref{cor:delta_subopt}, we get the following results.

\begin{corollary}\label{cor:main1_acc}
Assume that all norms are the $l_2$ norm, and take $y_s^0 = \mb 0$, $w_{y_s}(y_s) = \frac{1}{2}\|y_s\|^2$, $w_x(x) = \frac{1}{2}\|x\|^2$, and $w_X(X) = \sum_{v\in V} w_x(x_v)$. Assume that conditions of Corollaries \ref{cor:main_acc} and \ref{cor:good_init_CS} hold and $X^{init}$ is initialized according to Corollary \ref{cor:good_init_CS}, and the following holds for $A_0,A_1$ specified below
\begin{displaymath}
     N \geq \frac{2\overline{r}A}{\sqrt{\mu_f\epsilon} },\quad A = \sqrt{\overline{r^3}/(\overline{r})^3 + 7(\overline{r^2}/(\overline{r})^2)^2}\cdot A_1 +A_0.
\end{displaymath}

\begin{enumerate}
    \item If $\{f_v\}_{v\in V}$ is $\widehat{a}_1$-similar, then take $A_1 = \widehat{a}_1$ and $\rho_s =\frac{ \|K_s\|}{\sum_{s'=1}^S\|K_{s'}\|}$
    \begin{enumerate}
    \item $\frac{\sum_{k=0}^N \theta_k\widehat{X}^k}{\sum_{k=0}^N \theta_{k}}$ is an $(\epsilon,\epsilon/\xi)$-solution if $R_s=\widehat{R}_s^{ccv}$ as defined in \eqref{eq:r_1} (then $\sqrt{2D_s^y}= \frac{\xi + \widehat{a}_1}{\sigma_{\min}^+(K)}$) and $A_0 = \frac{(\sum_{s=1}^S\|K_s\|)}{\sigma_{\min}^+(K)}\cdot (\xi + \widehat{a}_1)$; 
\item $\Pi(\frac{\sum_{k=0}^N \theta_k\widehat{X}^k}{\sum_{k=0}^N \theta_{k}})$ is an $(\epsilon(1+1/\xi),0)$-solution if $R_s = \widehat{R}_s^{prj}$ as defined in \eqref{eq:r_2} (then $ \sqrt{2D_s^y}=\frac{(1+\xi) \widehat{a}_1}{\sigma_{\min}^+(K)}$) and $A_0= \frac{(1+\xi) (\sum_{s=1}^S \|K_s\|)\cdot \widehat{a}_1}{\sigma_{\min}^+(K)}$.
\end{enumerate}
\item If $\{f_v\}_{v\in V}$ is $\{(a_s,K_s)\}_{s\in [S]}$-similar where $a_s>0$ for all $s$, then take $A_1 = (\sum_{s=1}^S a_s^2)^{1/2}$,
\begin{enumerate}
    \item $\frac{\sum_{k=0}^N \theta_k\widehat{X}^k}{\sum_{k=0}^N \theta_{k}}$ is an $(\epsilon,\epsilon/\xi)$-solution if $R_s = R_s^{ccv}$ as defined in \eqref{eq:r_s_1} (then $\sqrt{2D_s^y}=\frac{\xi + a_s}{\sigma_{\min}^+(K_s)}$), $\rho_s = (\frac{\xi + a_s}{\sigma^+_{\min}(K_s)})/(\sum_{s'=1}^S \frac{\xi + a_{s'}}{\sigma^+_{\min}(K_{s'})^+})$, and $A_0=\sum_{s=1}^S (\xi + a_s)\cdot \frac{\|K_s\|}{\sigma_{\min}^+(K_s)}$; 
\item $\Pi(\frac{\sum_{k=0}^N \theta_k\widehat{X}^k}{\sum_{k=0}^N \theta_{k}})$ is an $(\epsilon(1+1/\xi),0)$-solution if $R_s = R_s^{prj}$ satisfies \eqref{eq:r_s_2} (then $\sqrt{2D_s^y}=\frac{(1+\xi )a_s}{\sigma_{\min}^+(K_s)}$), $\rho_s = (\frac{ a_s}{\sigma^+_{\min}(K_s)})/(\sum_{s'=1}^S \frac{ a_{s'}}{\sigma^+_{\min}(K_{s'})^+})$, and $A_0=(1+\xi)(\sum_{s=1}^S a_s\cdot \frac{\|K_s\|}{\sigma_{\min}^+(K_s)})$. 
\end{enumerate}
\end{enumerate}
\end{corollary}

\textbf{Subgradient oracle complexities.} With the initialization in Corollary \ref{cor:good_init_CS} and $C = 1$, the number of subgradient steps needed to find $X^{init}$ is $\underline{T}\geq \frac{8mM_f^2}{\widetilde{a}^2}$, constant in $\epsilon$.

In addition, in Corollary \ref{cor:acc_convergence}, we can take $D_0 = \frac{4\widetilde{a}^2}{\mu_f^2}$, and so $D_1 = \frac{2\widetilde{a}^2(\overline{r^2}/\overline{r})^2}{M_f^2}$. Thus, the requirement on $T$ becomes $T/N\geq \max(\frac{5}{\sqrt{D_1}},\frac{64\overline{r}}{D_1})$, i.e. $T/N = \Omega(\max(\frac{M_f/\widetilde{a}}{\overline{r^2}/\overline{r}},(\frac{M_f/\widetilde{a}}{\overline{r^2}/\overline{r}})^2\cdot \overline{r}))$, and so the total subgradient steps needed (for each agent) is
\begin{displaymath}
    N^2\cdot O(\max(\frac{M_f/\widetilde{a}}{\overline{r^2}/\overline{r}},(\frac{M_f/\widetilde{a}}{\overline{r^2}/\overline{r}})^2\cdot \overline{r})) = O(\frac{\overline{r}^2A^2}{\mu_f \epsilon}\cdot \max(\frac{M_f/\widetilde{a}}{\overline{r^2}/\overline{r}},(\frac{M_f/\widetilde{a}}{\overline{r^2}/\overline{r}})^2\cdot \overline{r}))
\end{displaymath}
In the special case where $S = 1$ and $\|K\| = O(\sigma_{\min}^+(K))$, we have $\widetilde{a} = \Omega(A)$. Further assuming that $A = O(\sqrt{m}M_f)$ (which holds when $\|f'_v\|\leq M_f$ for all $v$), the above can be simplified as $\frac{\overline{r}\sqrt{m}M_f^2}{\mu_f \epsilon}$.

\textbf{Communication rounds complexities.} From Corollary \ref{cor:main1_acc}, the communication rounds needed is $N = O(\frac{\overline{r}A}{\sqrt{\mu_f \epsilon}})$, where $A$ depends on function similarities and higher moments of $\{r_s\}_{s\in [S]}$. In terms of $N$ and $T$'s dependency on $\epsilon,\mu_f$, $O(1/\sqrt{\mu_f\epsilon})$ communication rounds and $O(1/(\mu_f\epsilon))$ gradient steps are needed.

Now, consider the special case where $\overline{r^2}=O((\overline{r})^2)$ and $\overline{r^3}=O((\overline{r})^3)$, i.e. $r_s$ has small variation, then $\sqrt{\overline{r^3}/(\overline{r})^3 + 7(\overline{r^2}/(\overline{r})^2)^2} = O(1)$, and so $A = O(\widetilde{a} + A_0)$, then $\xi=1$ with $\widehat{R}_s^{prj}$ and $R_s^{prj}$ give the following $N_1$ and $N_2$ respectively 
\begin{displaymath}
     N_1 = O\left(\frac{\overline{r}\widehat{a}_1}{\sqrt{\epsilon\mu_f}}\cdot \frac{\sum_{s=1}^S \|K_s\|}{\sigma_{\min}^+(K)}\right),\quad N_2 = O\left(\frac{\overline{r}}{\sqrt{\epsilon\mu_f}}\cdot \left(\sum_{s=1}^S a_s \cdot \frac{ \|K_s\|}{\sigma_{\min}^+(K_s)}\right)\right),
\end{displaymath}
both have linear dependence on $\overline{r}$ and function similarities. (For $N_2$, we use $(\sum_{s=1}^S a_s^2)^{1/2} \leq \sum_{s=1}^S a_s$ and $\|K_s\|\geq \sigma_{\min}^+(K_s)$.)

\textbf{Comparison with communication lower bounds.} When $S = 1$ (and so $\overline{r^2} = (\overline{r})^2$), $K = I-\Pi$ (and so $\|K\| = \sigma_{\min}^+(K)$), with $w_{x}(x) = \frac{1}{2}\|x\|^2$ and $w_{y_s}(y_s) = \frac{1}{2}\|y_s\|^2$, assuming that $\|f_v'\|\leq M_f$, $a_1^2\leq m\gamma^2$ for some $\gamma\leq M_f$, we have $N = O(\frac{\overline{r}\cdot \sqrt{m}\gamma}{\sqrt{\epsilon \mu_f}})$. Since communication is only needed when $\agent(y_1)$ updates, the total number of communication rounds is $N/r_s = O(\frac{\sqrt{m}\gamma}{\sqrt{\epsilon \mu_f}})$, which achieves the theoretical lower bound (Theorem 2 and discussion after in \cite{ArjevaniShamir2015Communication}) on the communication round complexity for $\mu$-strongly convex, $\sqrt{m}\gamma$-similar functions, and so is optimal\footnote{\cite{ArjevaniShamir2015Communication} constructs a pair of ``chain like'' functions $\{\gamma F_1,\gamma F_2\}$ which are $\Theta(\gamma )$-similar and $\mu$-strongly convex. In addition, when $m/2$ agents are given $\gamma F_1$ and the rest are given $\gamma F_2$ (and so this set of $m$ functions is $\Theta(\sqrt{m}\gamma)$-similar), the number of rounds of communication needed is $\Omega(\gamma\sqrt{\frac{1}{\mu_f\epsilon/m}})$.}.

\section{Numerical experiments}\label{sec:exp}

Below, we present numerical experiments applying MT-PDHG to linear programming problems (Section \ref{sec:exp-LP}), and (A)MT-PDHG to distributed Support Vector Machine problems (Section \ref{sec:exp-DO}). All experiments are implemented using Python and run on MacBook Air with the M3 chip and 8 cores.

\subsection{Experiment: linear programming}\label{sec:exp-LP}
In this set of experiments, we apply our MT-PDHG and the vanilla PDHG to simulated linear programming problems of the form
\begin{equation*}
    \min_{X\in \R^n} c^TX,\quad s.t.~AX =b,~X\geq \mb 0,
\end{equation*}
where $A\in \R^{m\times n}$ and the rows are divided evenly into $S = 6$ blocks, with the associated dual blocks updated in parallel: 
\begin{equation*}
    \min_{X\in \R^n}\max_{Y = (y_s)_{s\in [S]}\in \R^m} c^TX - \sum_{s=1}^S (y_s^TA_sX-y_s^Tb),\quad s.t.~X\geq \mb 0.
\end{equation*}

\textbf{Problem simulation.} We simulate $c_i\sim \mc N(0,1)$ for $i\in [n]$, $A_{i,j}\sim \text{Uniform}([0,1))$ for $i\in [m],j\in [n]$, all independently. To ensure the problem is feasible, we simulate $X'_{i}\sim \text{Uniform}([0,1))$ for $i\in [n]$ independent of $c,A$, and take $b = AX'$.

\textbf{Choice of rates $r_s$.} We consider 4 combinations of the updating rates for the dual blocks: 1. $r_s=1$ for all $s$; 2. $r_1 = r_2 = r_3 = 1$ and $r_4 = r_5 = r_6 = 10$; 3. $r_s=10$ for all $s$; 4. $r_s=50$ for all $s$. These are denoted in different colors in Figures \ref{fig:LP-exp-iter} and \ref{fig:LP-exp-time}. 

\textbf{Algorithm setups.} We use $w_X(X) = \frac{1}{2}\|X\|^2$ and $w_{y_s}(y_s) = \frac{1}{2}\|y_s\|^2$. We benchmark our MT-PDHG against the vanilla PDHG, where at each global iteration $k$, $X$ is updated as the minimizer to $\langle c- A^T\overline{Y}^k,X\rangle + \frac{\eta}{2}\|X-X^{k-1}\|^2$, with $\eta = \|A\|$, and at $k = i_sr_s$, the dual block $y_s$ is updated as the minimizer to $\langle K_s\widetilde{X}_s^{i_s}-b,y_s\rangle+ \frac{\tau_s}{2}\|y_s-y_s^{i_s-1}\|^2$, where $\widetilde{X}_s^{i_s} = 2X^{k-1} - X^{k-2}$ and $\tau_s = 2S\|A_s\|^2/\eta$. Thus, there is no mixture of Bregman divergence and multi-timescale extrapolation. For our MT-PDHG, we use $\rho_s = 1/S$ and the same $\eta,\tau_s$ as above. We initialize $X = \mb 0$ and $Y = \mb 0$.

\textbf{Results.} We present the KKT residual $(\|AX-b\|^2 + \|[A^TY-c]_+\|^2 + [c^TX - b^TY]_+)^{1/2}$ for MT-PDHG and vanilla PDHG, under different combinations of $r_s$. 

In Figure \ref{fig:LP-exp-iter}, we present the residual as a function of global iteration. As can be seen, our MT-PDHG is stable under various combinations of updating rates. Interestingly, the vanilla PDHG, even without mixture of Bregman divergence and multi-timescale extrapolation, still converges when the rates are small (rate combinations 1, 2, and 3). However, as the yellow lines suggest, when the rates ($r_s=50$) are large, our explicit control through the Bregman divergence and the extrapolation helps stabilize the performance. Moreover, comparing the green and blue curves, which have the same $\max_s r_s = 10$ but different average $r_s$, which is $5.5$ for green but $10$ for blue, we see that smaller $\overline{r}$ indeed corresponds to faster convergence rate (as a function of iteration), demonstrating that our MT-PDHG are robust to extreme values in $r_s$. 

In Figure \ref{fig:LP-exp-time}, we rescale the $x$-axis of Figure \ref{fig:LP-exp-iter} by the total runtime of each configuration. Comparing MT-PDHG with vanilla PDHG, we observe an additional overhead arising from the computation of the Bregman mixture and multi-timescale extrapolation, which involves evaluating $\sum_{s}\rho_s X^{k-r_s}$ and $\sum_{i=1}^{r_s} X^{k-i}$. However, as $m$ and $n$ grow, this overhead becomes negligible since the runtime is increasingly dominated by matrix–vector multiplications. We also observe that, approximately, the wall-clock time required to reach a target accuracy scales with $\max_{s} r_s$ (instead of $\overline{r}$ for the iteration). We note, however, that this behavior may depend on implementation details and the underlying computing hardware.

\begin{figure}[htbp]
    \centering
    \begin{subfigure}[t]{0.3\textwidth}
        \centering
        \includegraphics[width = \textwidth]{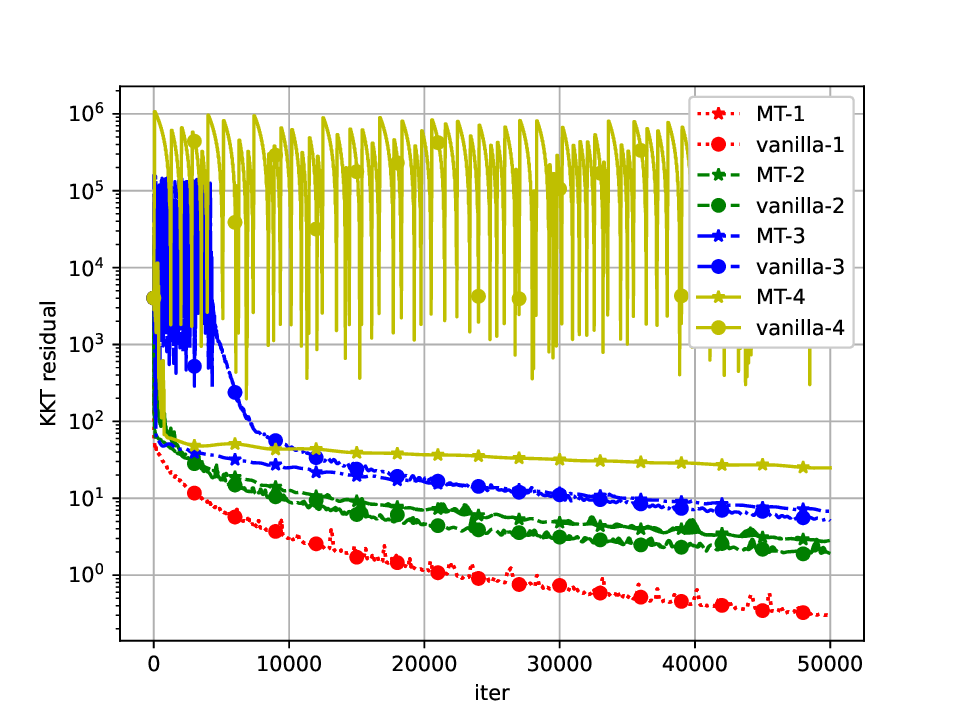}
        \caption{$(m,n) = (500,1000)$}
    \end{subfigure} 
    \begin{subfigure}[t]{0.3\textwidth}
        \centering
        \includegraphics[width = \textwidth]{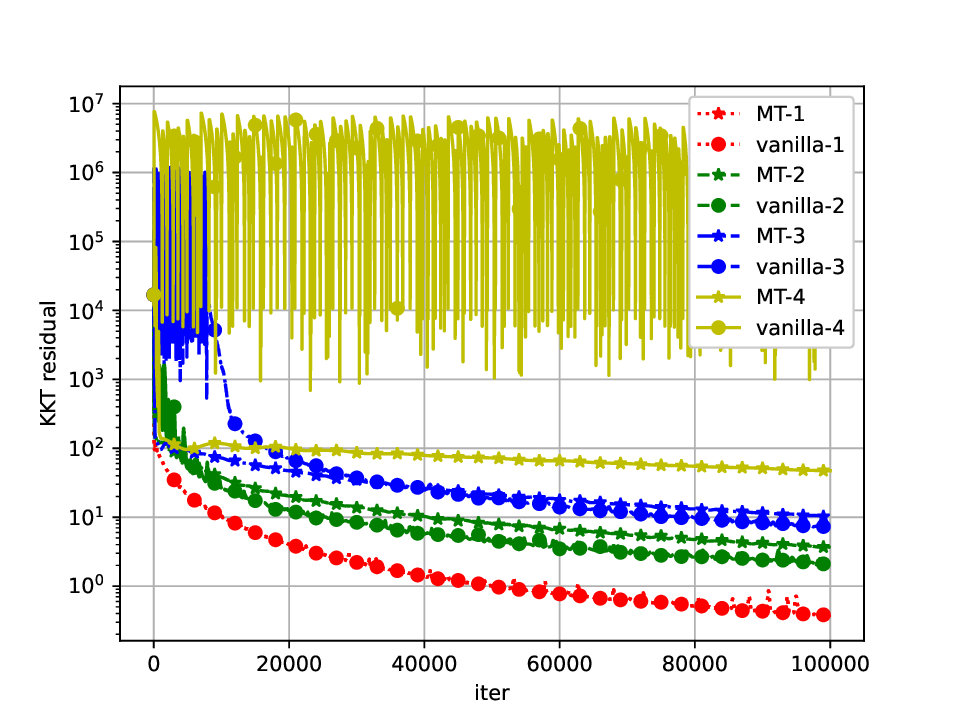}
        \caption{$(m,n) = (1000,3000)$}
    \end{subfigure}
    \begin{subfigure}[t]{0.3\textwidth}
        \centering
        \includegraphics[width = \textwidth]{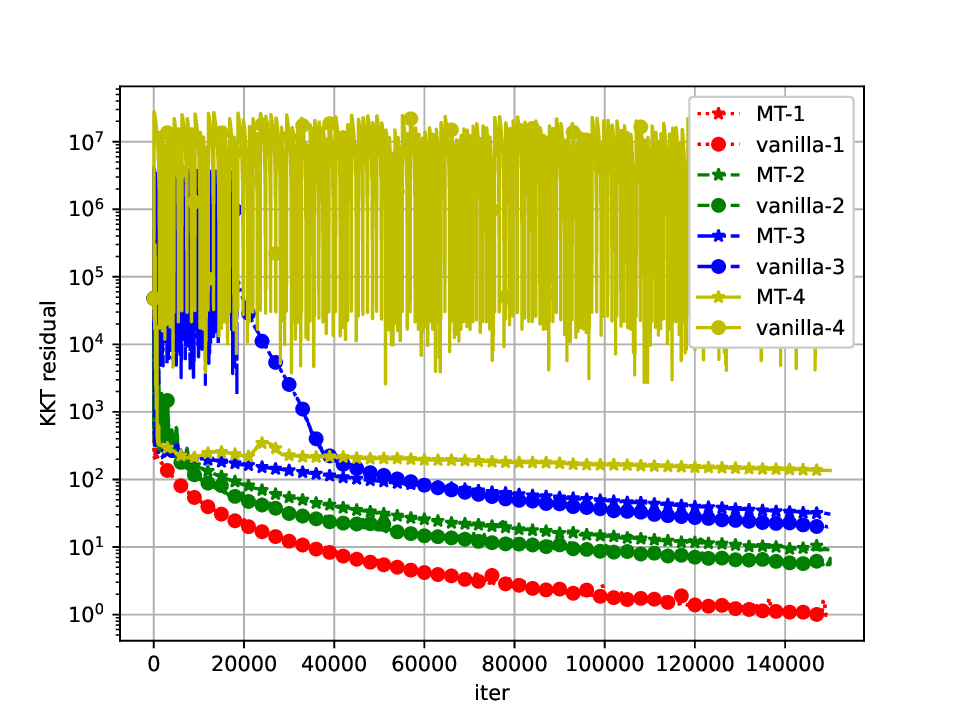}
        \caption{$(m,n) = (3000,5000)$}
    \end{subfigure}
        \caption{KKT residual as a function of global iteration for different $(m,n)$, under 4 different combinations of the updating rates for the dual blocks: 1. $r_s=1$ for all $s$; 2. $r_1 = r_2 = r_3 = 1$ and $r_4 = r_5 = r_6 = 10$; 3. $r_s=10$ for all $s$; 4. $r_s=50$ for all $s$.
        }\label{fig:LP-exp-iter}
\end{figure}

\begin{figure}[htbp]
    \centering
    \begin{subfigure}[t]{0.3\textwidth}
        \centering
        \includegraphics[width = \textwidth]{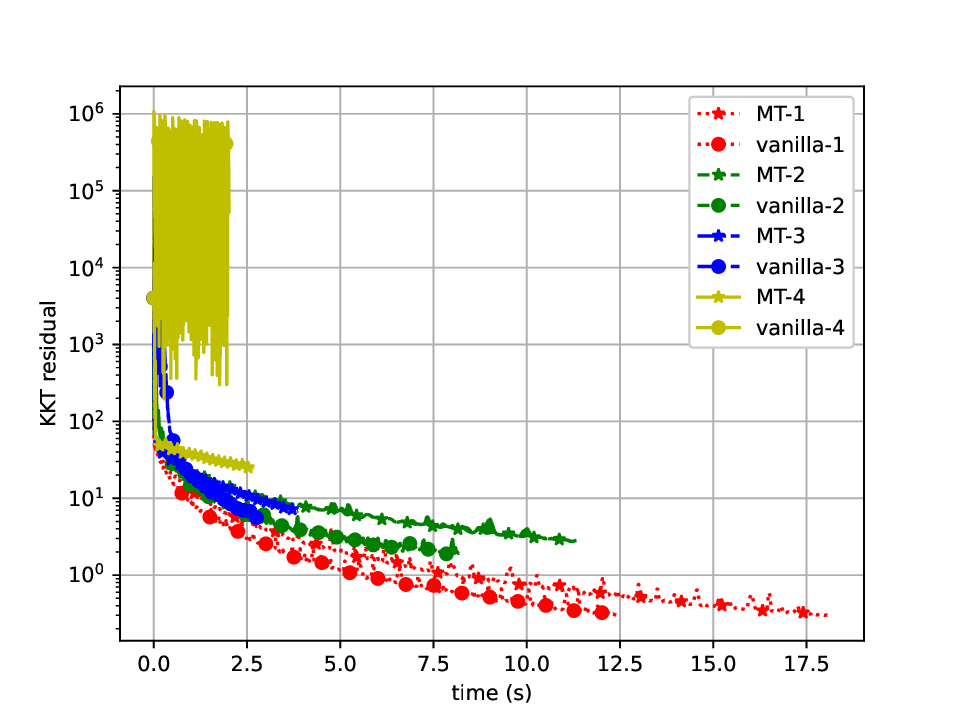}
        \caption{$(m,n) = (500,1000)$}
    \end{subfigure} 
    \begin{subfigure}[t]{0.3\textwidth}
        \centering
        \includegraphics[width = \textwidth]{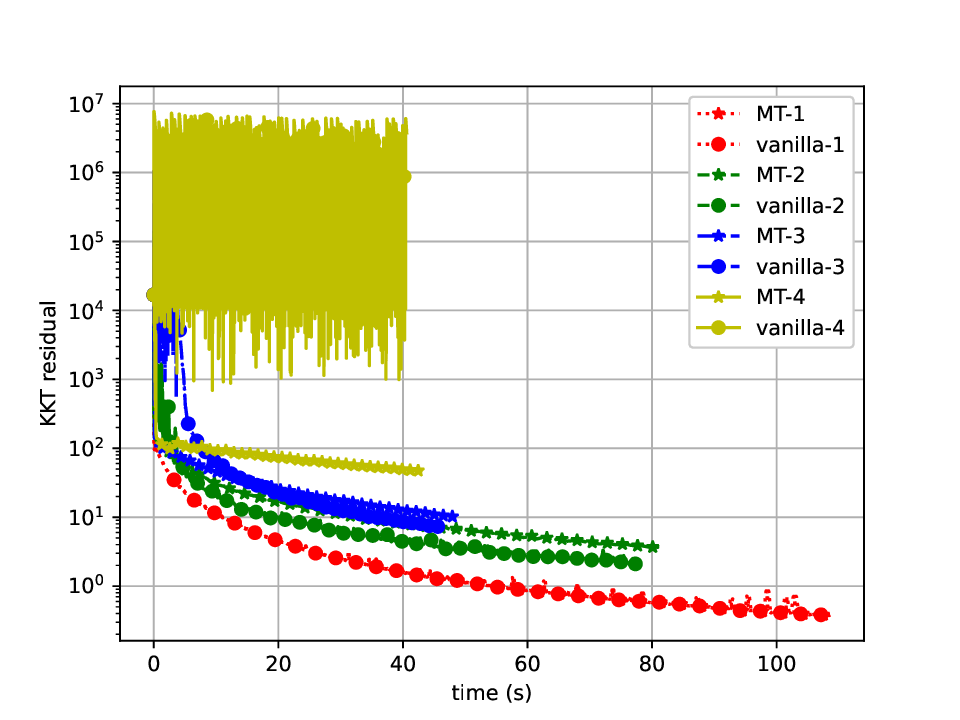}
        \caption{$(m,n) = (1000,3000)$}
    \end{subfigure}
    \begin{subfigure}[t]{0.3\textwidth}
        \centering
        \includegraphics[width = \textwidth]{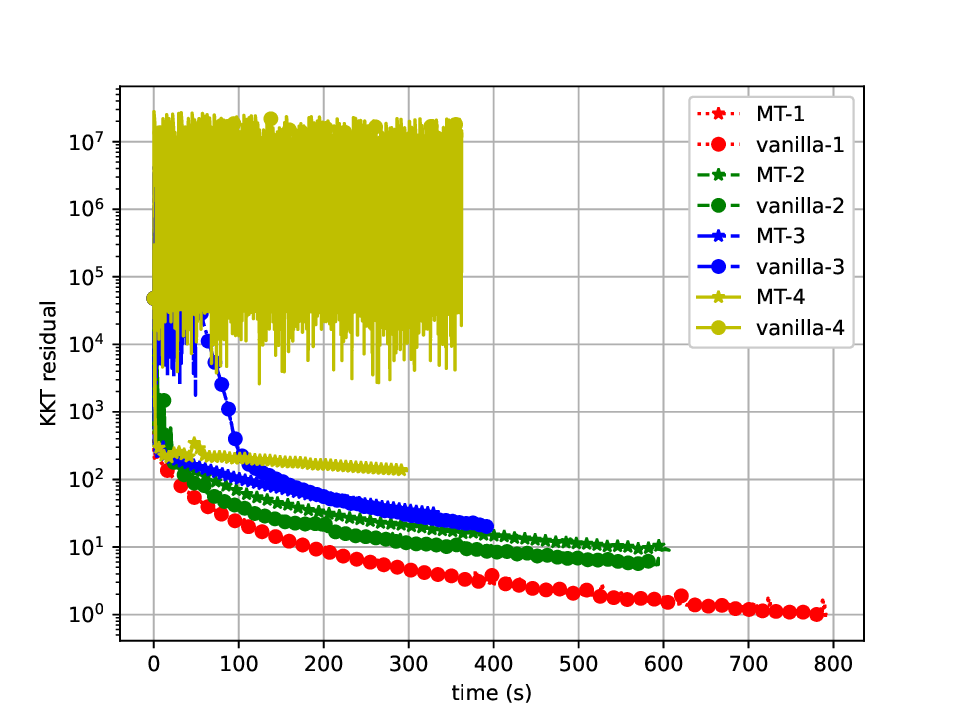}
        \caption{$(m,n) = (3000,5000)$}
    \end{subfigure}
        \caption{KKT residual as a function of running time in seconds for different $(m,n)$, under 4 different combinations of the updating rates for the dual blocks: 1. $r_s=1$ for all $s$; 2. $r_1 = r_2 = r_3 = 1$ and $r_4 = r_5 = r_6 = 10$; 3. $r_s=10$ for all $s$; 4. $r_s=50$ for all $s$.
        }\label{fig:LP-exp-time}
\end{figure}

\subsection{Experiment: distributed Support Vector Machine}\label{sec:exp-DO}

We consider the Support Vector Machine (SVM) problem with hinge loss and additional regularization. More precisely, each primal $\agent(x_v)$ is given $m_s$ pairs $\{(b_v^l,y_v^l)\}_{l\in [m_s]}$ such that $b_v^l\in \R^{\overline{d}}$ is a feature vector satisfying $\|b_v^l\|=1$, and $y_v^l\in \{\pm 1\}$ is the label. The goal of SVM is to find a weight vector $x\in \R^{\overline{d}}$ such that the linear classifier $b \to  \sign(\langle b,x\rangle)$ agrees with most pairs $(b_v^l,y_v^l)$ in the dataset. To achieve this, one common approach is to solve the following (regularized) hinge loss minimization problem (in a distributed fashion)\cite{lan_communication-efficient_2020,Duchi2012DA}: 
\begin{equation}\label{eq:svm}
    \min_{x\in \overline{\mc X}} \sum_{v\in V}f_v(x),\quad f_v(x) = \frac{1}{m_s}\sum_{l=1}^{m_s} [1-y_v^l\langle b_v^l,x\rangle]_+ + \frac{\mu}{2}\|x\|^2,~v\in V,
\end{equation}
In this experiment, we use the w8a dataset in LIBSVM \cite{CC01a}, which consists of $49749$ samples, and the feature dimension is $\overline{d} = 300$. We first normalize the features $\|b_v^l\| = 1$ for all data, and take $\overline{\mc X} = \{x\in \R^{\overline{d}}|\|x\|\leq 5\}$. When $\mu = 0$, \eqref{eq:svm} is the classical hinge loss minimization problem, and when $\mu>0$, the local objectives are $\mu$-strongly convex.

\textbf{Setup.} We take $w_x(x) = \frac{1}{2}\|x\|^2$ and $w_{y_s}(y_s) = \frac{1}{2}\|y_s\|^2$, and 
\begin{displaymath}
    f_v'(x) = -\frac{1}{m_s}\sum_{l=1}^{m_s} y_v^lb_v^l \cdot \mb 1[1>y_v^l\langle b_v^l,x\rangle] + \mu x.
\end{displaymath}
Since $\|b_v^l\|\leq 1$ and $\|x\|\leq 5$ (since $x\in \overline{\mc X}$), we have $\|f_v'\|\leq 1 + 5\mu$ and so we can take $M = 2(1 + 5\mu)$.

Below, we look at the suboptimality of $F(\Pi \underline{X}^k)$ where $\underline{X}^k:=\frac{\sum_{k'=0}^k \theta_{k'}\widehat{X}^{k'}}{\sum_{k'=0}^k \theta_{k'}}$.

\subsubsection{Suboptimality and $k, \overline{r}$}\label{sec:exp1}

In this experiment, we investigate the suboptimality of $F(\Pi \underline{X}^k)$ as a function of the iteration number $k$ and the mean updating rates for the dual $\overline{r}$.

\textbf{Communication setup.} We consider the hierarchical setup, with $3$ layers of dual agents and $1$ layer of primal agents, where each non-leaf node has $5$ child nodes. Thus, there are $m=125$ primal agents and $S = 31$ dual agents, and $|\child(s)|= 5$ for each non-leaf node. We assume that the dual agents at layer $i$ are updated with rate $r_i$ for $i=1,2,3$, and test with various $(r_1,r_2,r_3)$. We divide the data set evenly among the primal agents.

\textbf{Algorithm setup.} 
 Notice that the $\mu x$ part in $f'_v$ is common to all $v$ so we can take $a_s = 2\sqrt{|\Descendant(s)|}$ for all $s$. We use $R_s^{prj}$ as defined in \eqref{eq:r_s_2} with $\xi = 1$, and set $x_v^{init} = \mb 0$ and $y_s^{init} = \mb 0$. We test MT-PDHG for $N+1 = 3000$ and $\mu=0$. All parameters are set according to Theorem \ref{thm:main}. We test AMT-PDHG for $N+1 = 500$ and $\mu=0.01$. We set $T = N+1$ for simplicity, and set all other parameters according to Theorem \ref{thm:acc_convergence}.

\textbf{Results.} In Figures \ref{fig:exp1}, we present $F(\Pi \underline{X}^k)-F^*$ as a function of $k$ for MT-PDHG and AMT-PDHG respectively, where $F^*$ is the minimum value among all iterations of all algorithm configurations plus $0.001$ (as the $y$-axis is in the $\log$ scale). 

Different lines correspond to different $(r_1,r_2,r_3,\overline{r})$, with the line colors indicating $\overline{r}$, and the line with starred markers can serve as the benchmark: when all $r_s = 1$, our (A)MT-PDHG has the same updating rules as the classical PDHG. 

From the figures, we see that under all settings of $(r_1,r_2,r_3,\overline{r})$, our (A)MT-PDHG converge or show trend of convergence, and the convergence is faster for smaller $\overline{r}$. In addition, comparing the two figures in Figure \ref{fig:exp1}, we see that strong convexity (with AMT-PDHG) indeed accelerates the convergence. 


\begin{figure*}[h!]
    \centering
    \begin{subfigure}[t]{0.3\textwidth}
        \centering
        \includegraphics[width = \textwidth]{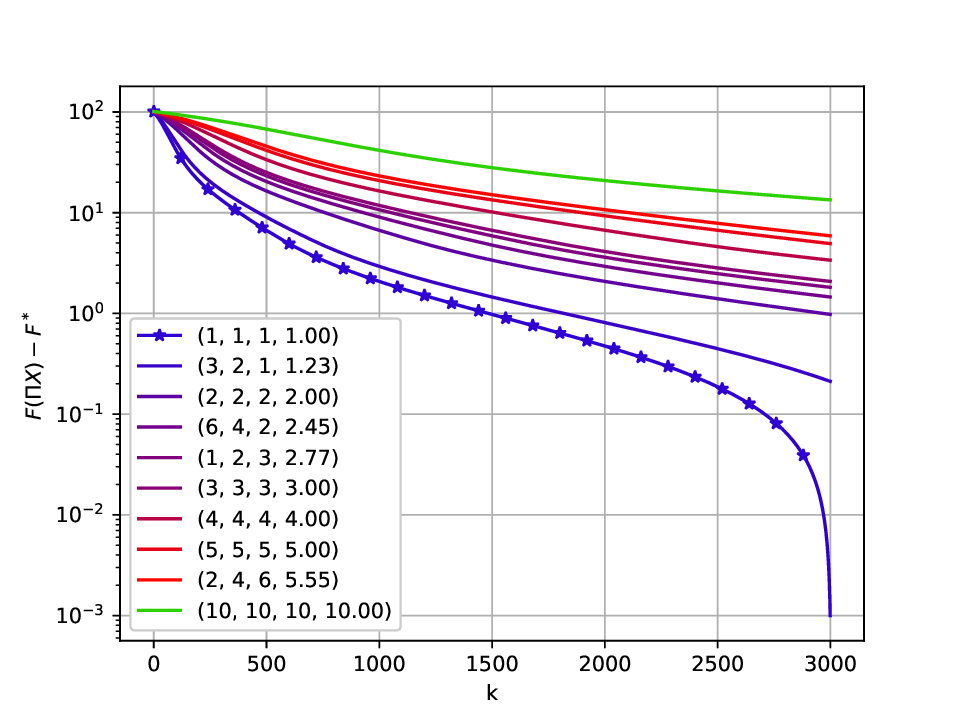}
    \end{subfigure}
    \hspace{0.1\textwidth}
    \begin{subfigure}[t]{0.3\textwidth}
        \centering
        \includegraphics[width = \textwidth]{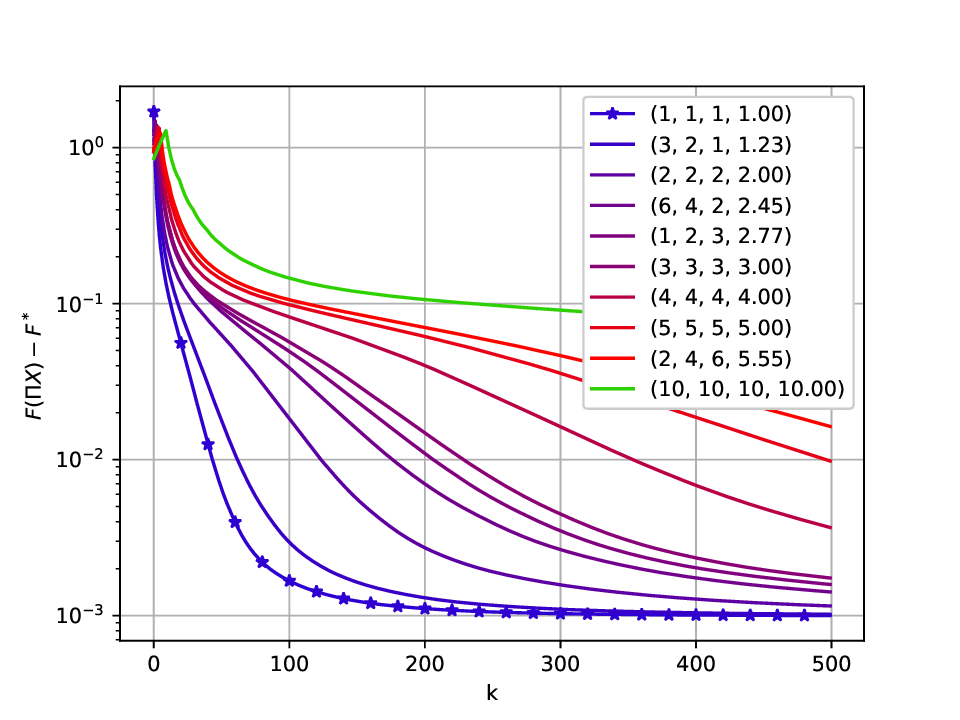}
    \end{subfigure}
    \caption{Dependence of $F(\Pi \underline{X}^k)$ on the iteration number $k$ and the mean updating rate $\overline{r}$ for MT-PDHG ($\mu=0$) (left) and AMT-PDHG ($\mu=0.01$) with communication sliding. Legends represent $(r_1,r_2,r_3,\overline{r})$ and line colors represent $\overline{r}$.}\label{fig:exp1}
\end{figure*}

\subsubsection{Suboptimality and costs of communication}
To further demonstrate the potential benefit of using different updating rates, we rescale the lines in the left plots of Figure \ref{fig:exp1} based on Amortized Costs ($AC$): since each of the $5^{i-1}$ dual agents at layer $i$ is updated every $r_i$ global iteration and each iteration costs $c_i$, the average cost per global iteration becomes $AC:=c_1/r_1 + 5c_2/r_2 + 25c_3/r_3$. In Figures \ref{fig:exp1_cost} and \ref{fig:exp1_acc_cost}, we rescale the $x$-axis in Figure \ref{fig:exp1} using $AC$ for each combinations of $r_s$, and the resulting plots reflect the objective values as a function of the costs. In addition, we use different markers to represent the ratio $r_1:r_2:r_3$: circle for $1:1:1$, triangle for $3:2:1$, and star for $1:2:3$.

From Figure \ref{fig:exp1_cost}, we see that the convergence rate for fixed $(c_1,c_2,c_3)$ depends on the ratios of $(r_1,r_2,r_3)$, as the lines with the same ratio (marker type) coincide. In addition, when the costs vary significantly, there is significant benefit of choosing the rates adaptive to the costs. For instance, in the middle two figures, with large $c_1$ or $c_2$, MT-PDHG is more cost-efficient when $r_1,r_2$ are large (as then dual agents in top two layers of the tree are updated less frequently). Indeed, lines with star markers show a faster rate of convergence. 

For AMT-PDHG, as suggested by our convergence results in Corollary \ref{cor:main1_acc}, the convergence rates have a more complicated dependency on $r_s$. From Figure \ref{fig:exp1_acc_cost}, the relative level of $(r_1,r_2,r_3)$ does not determine the convergence rates any more. However, the results still agree with the general intuition that with larger $c_1$ or $c_2$, it's more cost-efficient to choose relatively large $r_1$ or $r_2$.

\begin{figure*}[h!]
    \centering
    \begin{subfigure}[t]{0.23\textwidth}
        \centering
        \includegraphics[width = \textwidth]{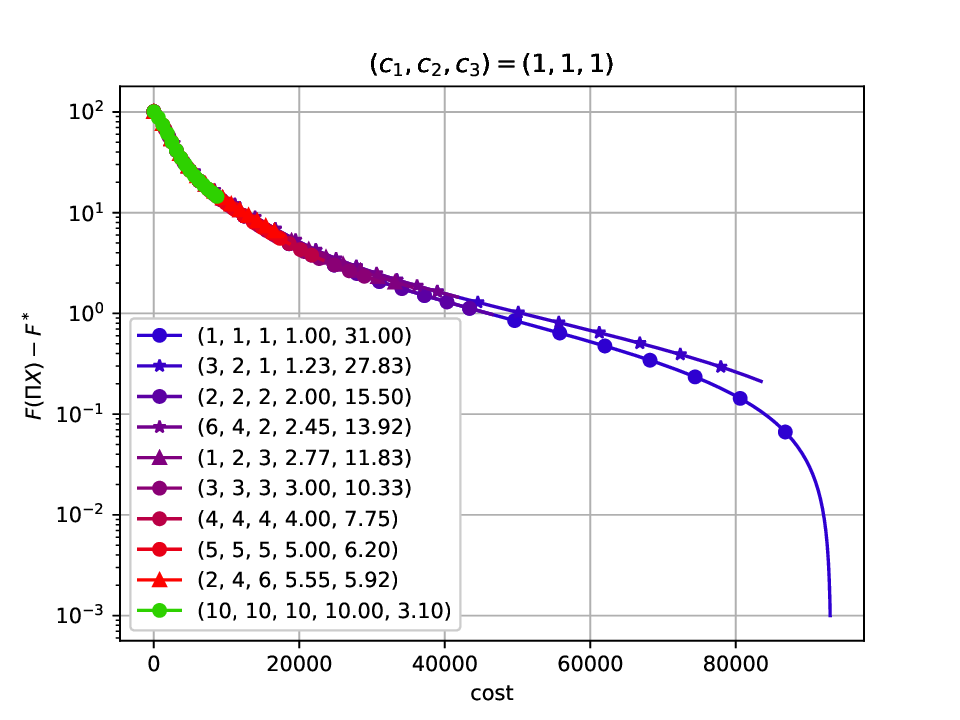}
    \end{subfigure}
    \begin{subfigure}[t]{0.23\textwidth}
        \centering
        \includegraphics[width = \textwidth]{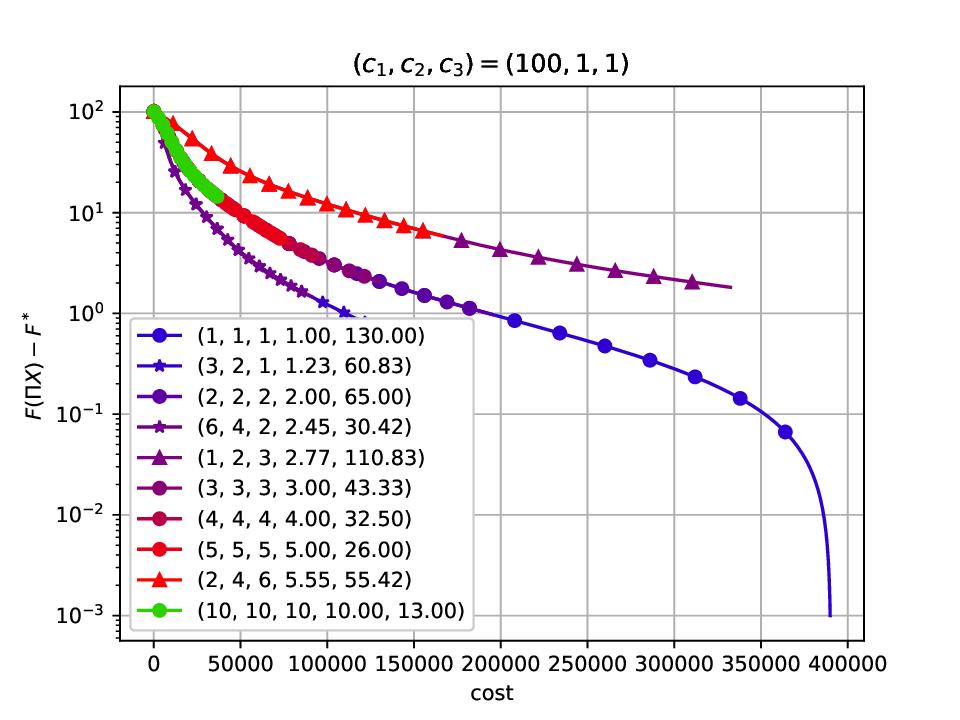}
    \end{subfigure}
    \begin{subfigure}[t]{0.23\textwidth}
        \centering
        \includegraphics[width = \textwidth]{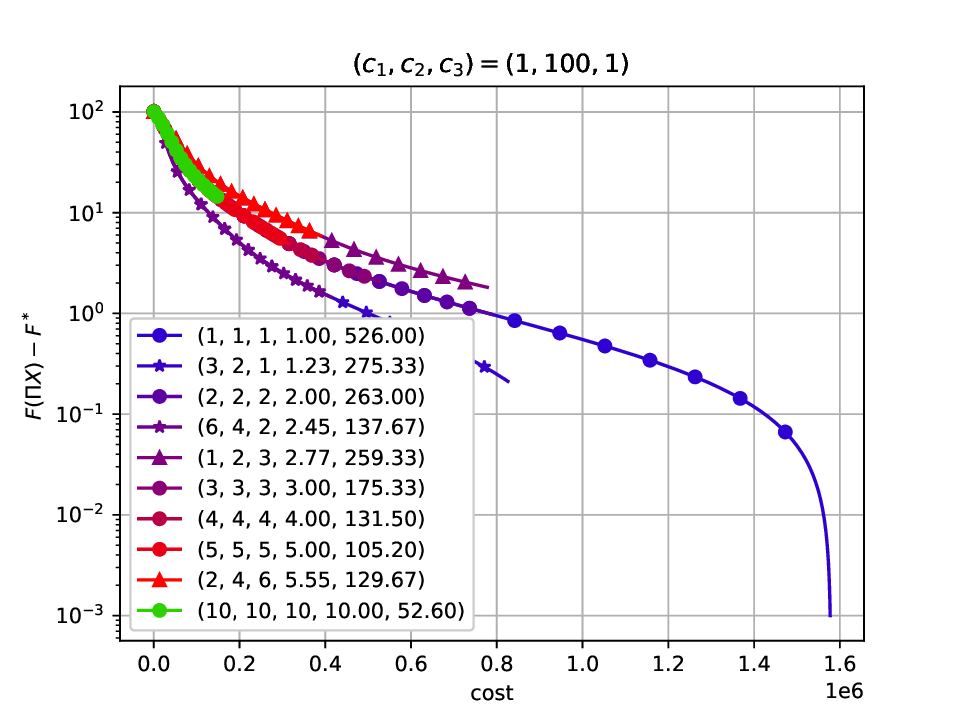}
    \end{subfigure}
    \begin{subfigure}[t]{0.23\textwidth}
        \centering
        \includegraphics[width = \textwidth]{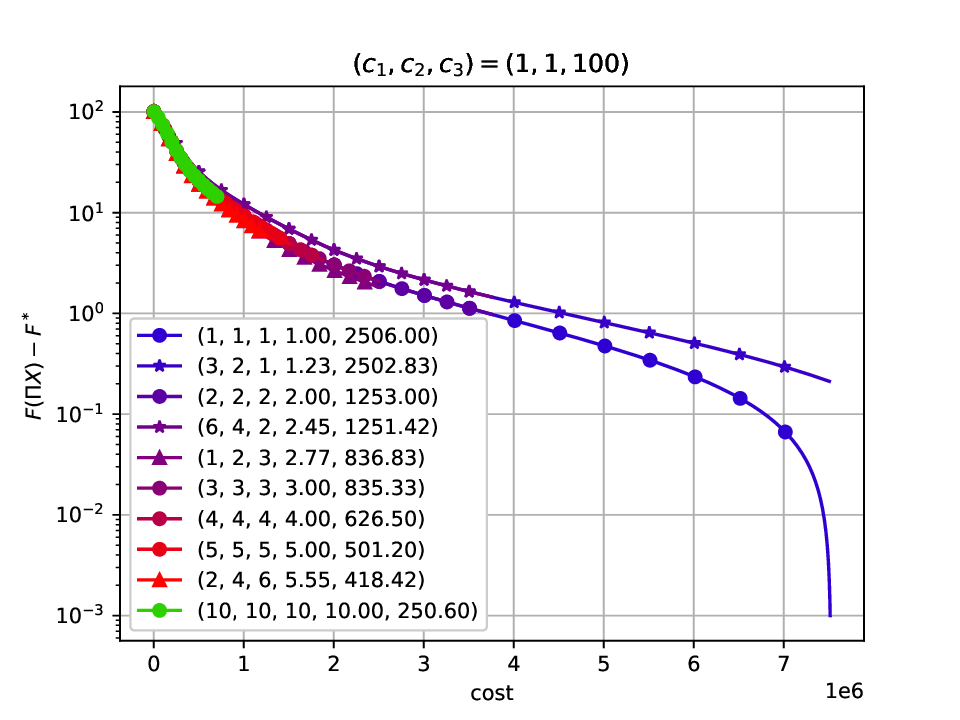}
    \end{subfigure}
    \caption{MT-PDHG ($\mu=0$), dependence of $F(\Pi \underline{X}^k)$ on the costs. Legend represents $(r_1,r_2,r_3,\overline{r},AC)$ where the amortized cost $AC:=c_1/r_1 + 5c_2/r_2 + 25c_3/r_3$. The marker indicates the ratio $r_1:r_2:r_3$: circle for $1:1:1$, triangle for $3:2:1$, and star for $1:2:3$. 
}\label{fig:exp1_cost}
\end{figure*}

\begin{figure*}[h!]
    \centering
    \begin{subfigure}[t]{0.23\textwidth}
        \centering
        \includegraphics[width = \textwidth]{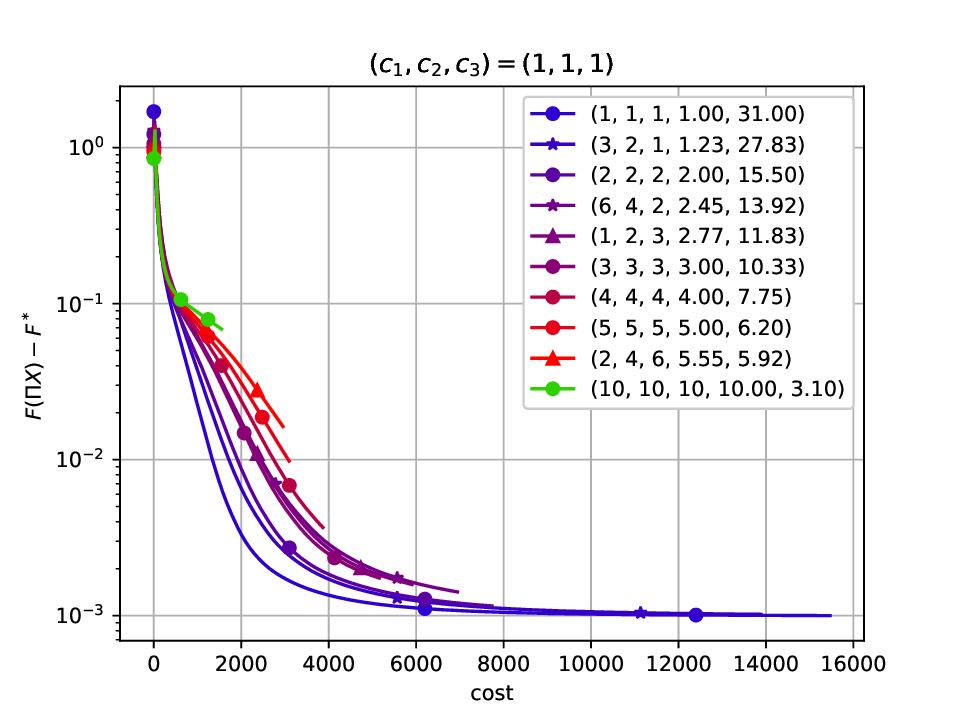}
    \end{subfigure}
    \begin{subfigure}[t]{0.23\textwidth}
        \centering
        \includegraphics[width = \textwidth]{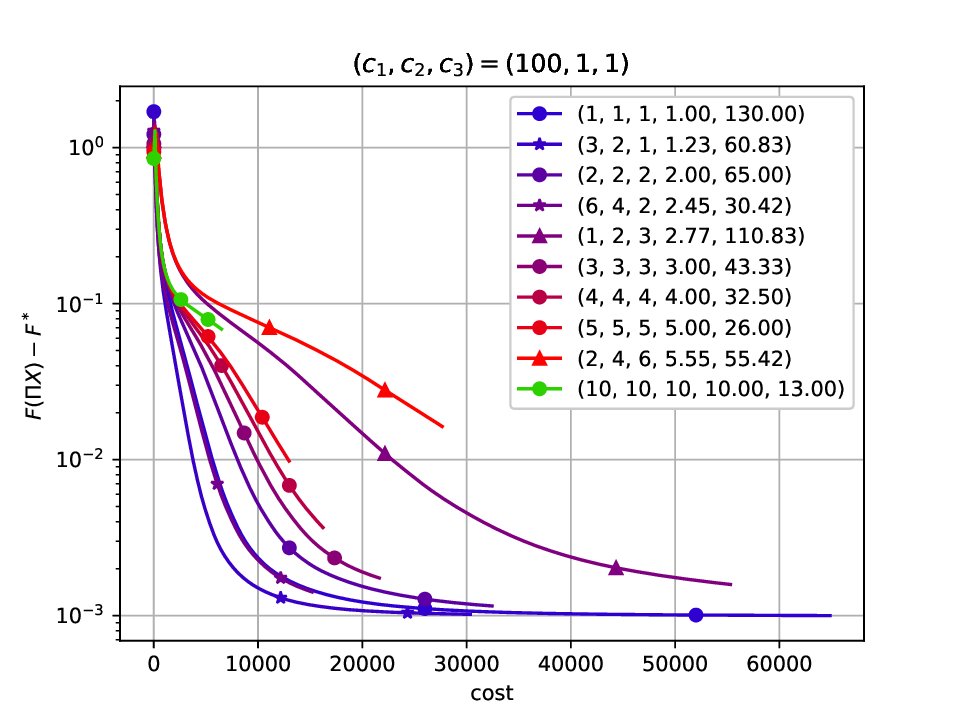}
    \end{subfigure}
    \begin{subfigure}[t]{0.23\textwidth}
        \centering
        \includegraphics[width = \textwidth]{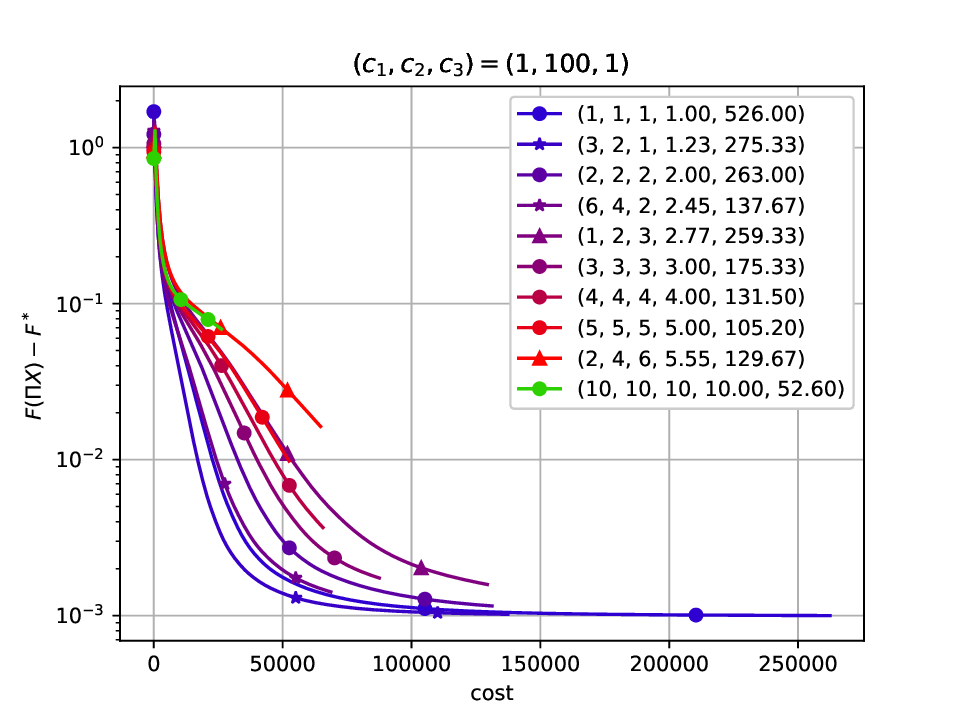}
    \end{subfigure}
    \begin{subfigure}[t]{0.23\textwidth}
        \centering
        \includegraphics[width = \textwidth]{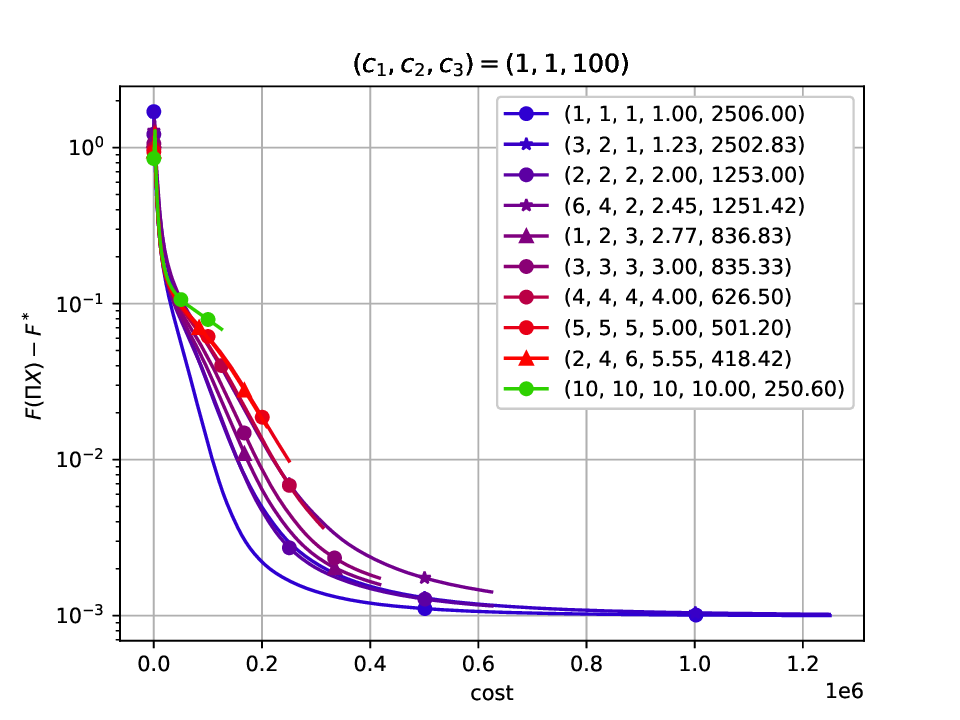}
    \end{subfigure}
    \caption{AMT-PDHG ($\mu=0.01$), dependence of $F(\Pi \underline{X}^k)$ on the costs. Legend represents $(r_1,r_2,r_3,\overline{r},AC)$ where the amortized cost $AC:=c_1/r_1 + 5c_2/r_2 + 25c_3/r_3$. The marker indicates the ratio $r_1:r_2:r_3$: circle for $1:1:1$, triangle for $3:2:1$, and star for $1:2:3$. }\label{fig:exp1_acc_cost}
\end{figure*}

\subsubsection{Suboptimality and $k,A$}\label{sec:exp2}
In this experiment, we use $1\%$ of the w8a dataset. We assume that there is only one dual agent (thus $A = \Theta(a_1)$) who updates at rate $r_1=1$, and there are $m=10$ primal agents. We focus on (normalized) $F(\Pi\underline{X}^k)$ as a function of $k$ and function similarities $a_1$.

\textbf{Dataset induced function similarities.} In this experiment, similarities between $\{f'_v\}_{v\in V}$ are inherited from similarities in the local datasets. More precisely, there is a global dataset $\{(b_{global}^l,y_{global}^l)\}_{l\in [m_{global}]}$ consisting of $m_{global}$ pairs of data. In addition, each agent has $m_{local}$ pairs of private data $\{(b_{local,v}^l,y_{local,v}^l)\}_{l\in [m_{local}]}$. Thus, $\agent(x_v)$ has access to $\{(b_{global}^l,y_{global}^l)\}_{l\in [m_{global}]}\cup\{(b_{local,v}^l,y_{local,v}^l)\}_{l\in [m_{local}]} $, a total of $m_s = m_{global} + m_{local}$ pairs of data.

More concretely, the global dataset consists of $\frac{1-\gamma}{1+(m-1)\gamma}$ fraction of the data, and the rest of the data is divided evenly among $m$ primal agents as local data. Thus $m_{local} \approx \gamma m_s  $. We test $\gamma = 0.1,0.2,\ldots,0.8$. Due to the global dataset, we have $\|f'_v-f'_{v'}\|\leq \frac{m_{local}}{m_s}$ for any $v,v'\in V$, and so we take $a_1 = 2\gamma \sqrt{m}$.

\textbf{Algorithm setup.} We consider two setups.
\begin{enumerate}
    \item Type-0 setup. We use $R_s = R_s^{prj}$ as defined in \eqref{eq:r_s_2} with $\xi = 1$, and $y_s^{init} = \mb 0$. We set the parameter $T=N+1$ and all other parameters are set according to Theorems \ref{thm:main} and \ref{thm:acc_convergence}. For the initialization, for MT-PDHG, we use $x_v^{init} = \mb 0$ and for AMT-PDHG, we use $\underline{x}_v^0 = \mb 0$ and construct $x_v^{init}$ according to Corollary \ref{cor:good_init_CS}. 
    \item Type-1 setup. In addition to the above $\gamma$-aware setup, we also test our algorithms for $R_s = 10000R_s^{prj}$, $a_1 = 2 \sqrt{m}$, and $x_v^{init} = \mb 0$, which we denote as type-1 setup. Compared to type-0, type-1 has larger dual domain size and ignores the function similarities. Thus, it can serve as an approximation to the DCS algorithms in \cite{lan_communication-efficient_2020}. 
\end{enumerate}
For both types of setups, we test MT-PDHG for $N+1 = 500$ and $\mu = 0$ and AMT-PDHG for $N+1 = 200$ and $\mu = 0.01$. Since the datasets are different for different $\gamma$, below, for each $\gamma$, we normalize $F(\Pi \underline{X}^k)$ such that $F(\mb 0) = m=500$ is normalized to $1$, and the minimum (over $k$ and two types) of $F(\Pi \underline{X}^k)$ is normalized to $0$.

\textbf{Results.} 
 In Figure \ref{fig:exp2}, we present the normalized $F(\Pi \underline{X}^k)$ as a function of $k$ for MT-PDHG and AMT-PDHG respectively. Different lines correspond to different types of setup and different $\gamma$, with the line colors indicating $\gamma$. As can be seen, our MT-PDHG and AMT-PDHG converge in all the tested settings, and strong convexity (with AMT-PDHG) accelerates the convergence. 
 
Moreover, for MT-PDHG, from the left figure in Figure \ref{fig:exp2} solid curves -- representing $\gamma$-aware setup -- are converging faster than the dotted curves, In addition, as $\gamma$ decreases -- global dataset takes a larger fraction -- the algorithm converges faster. These demonstrate that our proposed function similarity dependent penalties indeed take advantage of the function similarities to speed up the convergence. 

For the AMT-PDHG, from the right figure in Figure \ref{fig:exp2}, one can see that similarity helps speed up the convergence for both types of setups. We leave it to future works to investigate if the conditions on the penalty levels are necessary for function-similarity dependent convergence rates when the local objectives are strongly convex.



\begin{figure*}[h!]
    \centering
    \begin{subfigure}[t]{0.3\textwidth}
        \centering
        \includegraphics[width = \textwidth]{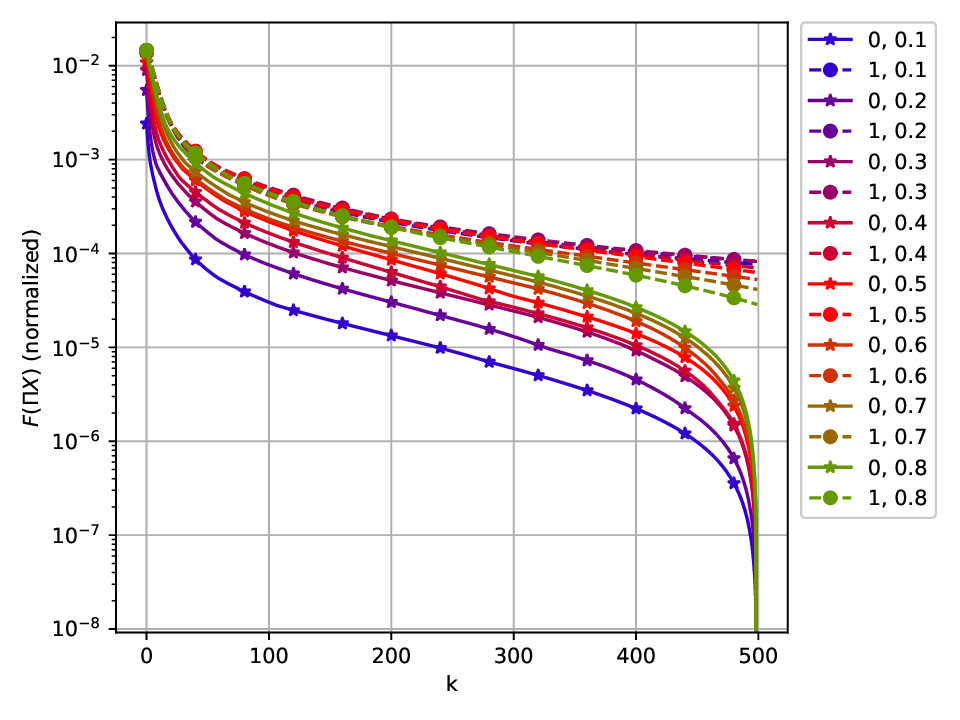}
    \end{subfigure}
    \hspace{0.1\textwidth}
    \begin{subfigure}[t]{0.3\textwidth}
        \centering
        \includegraphics[width = \textwidth]{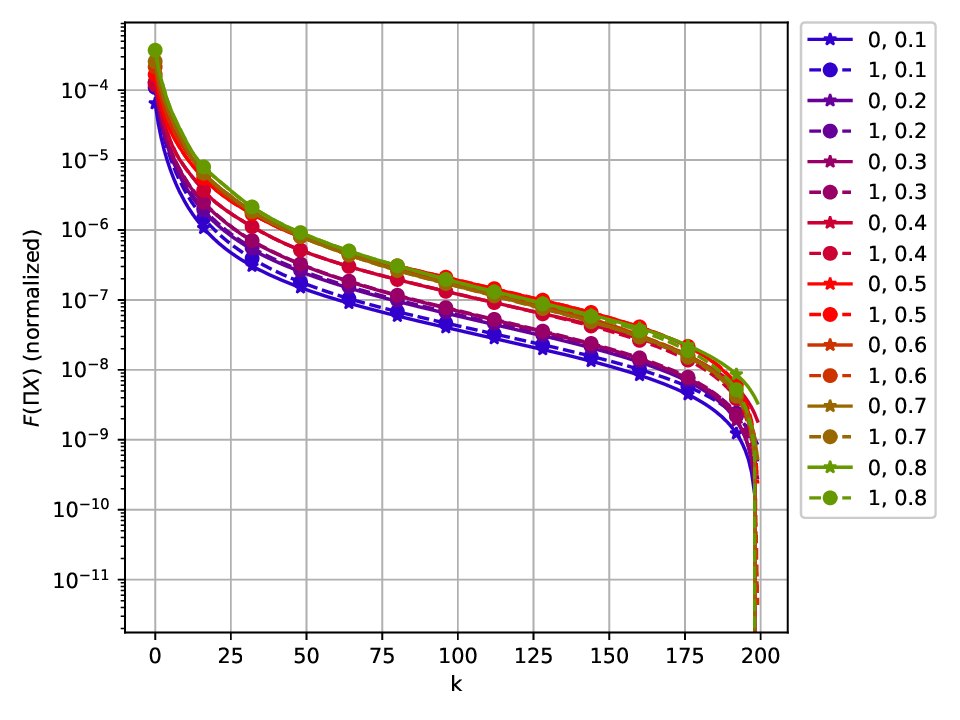}
    \end{subfigure}
    \caption{Dependence of normalized $F(\Pi \underline{X}^k)$ on the iteration number $k$ and function similarities $\gamma$. Left: MT-PDHG ($\mu=0$). Right: AMT-PDHG ($\mu=0.01$). Legends represent $(\text{type of setup},\gamma)$ and line colors represent $\gamma$. }\label{fig:exp2}
\end{figure*}

\section{Conclusion and future direction}\label{sec:conclusion}
In this work, we propose the (accelerated) multi-timescale PDHG algorithms for saddle point problems with block-decomposable duals. Our (A)MT-PDHG allows arbitrary updating rates for dual blocks while remaining fully deterministic and robust to extreme delays in dual updates. We further apply (A)MT-PDHG to distributed optimization and demonstrate how the flexibility in choosing the updating rates could help improve the overall algorithm efficiencies in heterogeneous environments.

To make the algorithms more practical, one direction of future work is to develop more space-efficient algorithms: currently each primal agents need to store $O(r_{\max})$ vectors in $\R^d$, which are used as mixtures of proximal centers and when calculating messages to dual agents. It is an interesting question whether one can achieve similar convergence guarantees with smaller memory requirement. Another promising direction is to extend the multi-timescale update mechanism to a broader class of algorithms, including those for saddle point and more general optimization problems.

\section*{Acknowledgements}
This work was funded by the Office of Naval Research grant N00014-24-1-2470.

\printbibliography

\appendix
\section{Generalized gradient sliding procedure}\label{sec:generalized-CS}
In Algorithm \ref{alg:CS-procedure}, we provide the generalized gradient sliding procedure. Then we provide the proofs of its properties. 

\begin{algorithm}
\caption{Generalized gradient sliding procedure}\label{alg:CS-procedure}
\begin{algorithmic}
\Require The sequences $\{\beta_t\}$ and $\{\lambda_t\}$, $\phi':U\to \R^{d_0}$ a subgradient oracle for $\phi$. 
\Ensure $(u^T,\widehat{u}^T) = GS(\phi,U,D,T,(\eta_i)_{i\in \mc I},v,(x_i)_{i\in \mc I},x^{init})$, an approximate solution to
\begin{displaymath}
    \min_{u\in U} \Phi(u):=\langle v,u\rangle + \phi(u) + \sum_{i\in \mc I} \eta_i D(u,x_i) 
\end{displaymath}
\State $(u^0,\widehat{u}^0)\gets (x^{init},x^{init})$, $\eta\gets \sum_{i\in \mc I} \eta_i$
\For{$t=1,\ldots,T$} 
\State \begin{displaymath}
    u^t = \argmin_{u\in U}\langle v+  \phi'(u^{t-1}),u\rangle + \sum_{i\in \mc I} \eta_i D(u,x_i) + \eta \beta_tD(u,u^{t-1})
\end{displaymath}
\EndFor
\State \begin{displaymath}
    \widehat{u}^T = (\sum_{t=1}^T \lambda_t)^{-1} \sum_{t=1}^T \lambda_t u^t.
\end{displaymath}
\end{algorithmic}
\end{algorithm}

\begin{lemma}[generalized lemma 5 in \cite{lan_communication-efficient_2020}]\label{lm:generalized_lm5}
    Let the convex function $q:U\to \R$, and $\mc I$ an arbitrary finite index set. Assume that the points $x_i\in U$ and the numbers $\eta_i\geq 0$ for $i\in \mc I$. Let $w:U\to \R$ be a distance generating function and 
    \begin{displaymath}
        u^* \in \argmin_{u\in U} q(u) + \sum_{i\in \mc I} \eta_i D(u,x_i).
    \end{displaymath}
    Then for any $u\in U$, we have
    \begin{displaymath}
        q(u^*) + \sum_{i\in \mc I} \eta_i D(u^*,x_i) \leq q(u) + \sum_{i\in \mc I} \eta_i D(u,x_i) - \sum_{i\in \mc I} \eta_i D(u,u^*).
    \end{displaymath}
\end{lemma}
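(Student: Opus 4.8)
The plan is to read off the first-order optimality condition satisfied by $u^*$ as a minimizer of a composite convex objective over $U$, and then combine it with convexity of $q$ and the elementary three-point identity for Bregman divergences. Write $\psi(u):=q(u)+\sum_{i\in\mc I}\eta_i D_w(u,x_i)$. Each map $u\mapsto D_w(u,x_i)=w(u)-w(x_i)-\langle\nabla w(x_i),u-x_i\rangle$ is $w$ plus an affine term, hence convex, and since $\eta_i\ge 0$ and $q$ is convex, $\psi$ is convex on $U$. First I would fix an arbitrary $u\in U$ and use that, by convexity of $U$, the segment $u_t:=u^*+t(u-u^*)$ stays in $U$ for $t\in[0,1]$, so $t\mapsto\psi(u_t)$ is minimized at $t=0$. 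Differentiating from the right at $t=0$ — the Bregman part is smooth with $\frac{d}{dt}D_w(u_t,x_i)\big|_{t=0}=\langle\nabla w(u^*)-\nabla w(x_i),u-u^*\rangle$, and the $q$ part contributes the one-sided directional derivative $q'(u^*;u-u^*)$ — yields
\begin{equation*}
q'(u^*;u-u^*)+\sum_{i\in\mc I}\eta_i\langle\nabla w(u^*)-\nabla w(x_i),u-u^*\rangle\;\ge\;0.
\end{equation*}

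Next, convexity of $q$ gives $q'(u^*;u-u^*)\le q(u)-q(u^*)$ (indeed $\frac{q(u_t)-q(u^*)}{t}\le q(u)-q(u^*)$ for every $t\in(0,1]$, and the left side decreases to the directional derivative as $t\downarrow 0$; alternatively one selects a subgradient $q'(u^*)\in\partial q(u^*)$ realizing the optimality condition and uses $q(u)\ge q(u^*)+\langle q'(u^*),u-u^*\rangle$). Combining this with the displayed inequality gives
\begin{equation*}
q(u^*)\;\le\;q(u)+\sum_{i\in\mc I}\eta_i\langle\nabla w(u^*)-\nabla w(x_i),u-u^*\rangle.
\end{equation*}
Finally I would invoke the identity $\langle\nabla w(b)-\nabla w(c),a-b\rangle=D_w(a,c)-D_w(a,b)-D_w(b,c)$, which is immediate from expanding the definition $D_w(x,z)=w(x)-w(z)-\langle\nabla w(z),x-z\rangle$, applied with $a=u$, $b=u^*$, $c=x_i$. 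Substituting $\langle\nabla w(u^*)-\nabla w(x_i),u-u^*\rangle=D_w(u,x_i)-D_w(u,u^*)-D_w(u^*,x_i)$ into the last inequality and rearranging the sum over $i\in\mc I$ produces exactly the claimed bound.

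The only step needing a little care is the constrained, nonsmooth optimality condition, which is why I would route the argument through feasible segments and one-sided directional derivatives rather than through a subdifferential sum rule; no constraint qualification beyond convexity of $U$ is needed because $q$ is finite on $U$. Everything else — verifying the three-point identity and the directional-derivative bookkeeping — is routine, and the generalization from $|\mc I|=1$ in Lemma 5 of \cite{lan_communication-efficient_2020} to an arbitrary finite $\mc I$ is purely cosmetic, since the Bregman terms enter additively throughout. I do not expect a genuine obstacle.
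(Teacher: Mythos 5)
Your proposal is correct and follows essentially the same route as the paper's proof: an optimality condition at $u^*$, convexity of $q$ to bound the directional term by $q(u)-q(u^*)$, and the three-point identity $\langle\nabla w(u^*)-\nabla w(x_i),u-u^*\rangle=D_w(u,x_i)-D_w(u,u^*)-D_w(u^*,x_i)$ summed over $i\in\mc I$. The only (minor) difference is that you derive the optimality condition via one-sided directional derivatives along feasible segments, whereas the paper directly asserts the existence of a subgradient $q'(u^*)\in\partial q(u^*)$ satisfying the variational inequality; both are fine here.
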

\begin{proof}[Proof of Lemma \ref{lm:generalized_lm5}]
    First, by the optimality condition for $u^*$, there exists $q'(u^*)\in \partial q(u^*) $ such that  
    \begin{displaymath}
        \langle q'(u^*) + \sum_{i\in \mc I} \eta_i \nabla D(u^*,x_i), u-u^*\rangle \geq 0,\quad \forall u\in U. 
    \end{displaymath}
    By definition, we have for each $i\in \mc I$ that 
    \begin{displaymath}
        D(u,x_i)- D(u^*,x_i) - D(u,u^*) = \langle \nabla w(x_i) - \nabla w(u^*),u-u^*\rangle =  -\langle \nabla D(u^*,x_i) ,u-u^*\rangle 
    \end{displaymath}
Thus, we have for any $u\in U$, 
\begin{align*}
        &\quad q(u) + \sum_{i\in \mc I} \eta_i D(u,x_i)\\
        &\geq q(u^*) + \langle q'(u^*),u-u^*\rangle + \sum_{i\in \mc I} \eta_i \left( D(u^*,x_i) + D_w(u,u^*) -  \langle \nabla D(u^*,x_i),u-u^*\rangle \right)\\
        &\geq q(u^*)  + \sum_{i\in \mc I} \eta_i  D(u^*,x_i) +\sum_{i\in \mc I} \eta_i  D(u,u^*).
    \end{align*}
\end{proof}

\begin{lemma}\label{lm:modified_prop2}
Assume that $U\subset \R^{d_0}$ is a convex set, and $\phi:U\to \R$ is a convex function such that 
\begin{displaymath}
    \frac{\mu}{2}\|x-y\|^2\leq \phi(x) - \phi(y)-\langle \phi'(y),x-y\rangle\leq M\|x-y\|,\quad\forall x,y\in U,
\end{displaymath}
where $\phi':U\to \R^{d_0}$ is a subgradient oracle, i.e. for each $y\in U$, $\phi'(y)\in \partial \phi(y)$ is a subgradient. In addition, $D_{w^x}(x,x')\leq \frac{C}{2}\|x-x'\|^2$ for some $C\in [0,\infty]$. If $\{\beta_t\}$ and $\{\lambda_t\}$ in Algorithm \ref{alg:CS-procedure} satisfies that 
    \begin{displaymath}
        \lambda_{t+1} (\eta\beta_{t+1}-\mu/C) \leq \lambda_t(1+\beta_t)\eta,\quad \forall t\geq 1,
    \end{displaymath}
    then for $t\geq 1$ and $u\in U$
    \begin{displaymath}
        (\sum_{t=1}^T \lambda_t)\cdot(\Phi(\widehat{u}^T) - \Phi(u))\leq (\eta\beta_1-\mu/C)\lambda_1D(u,u^0)  - \eta(1+\beta_T)\lambda_TD(u,u^T)+\sum_{t=1}^T \frac{M^2\lambda_t}{2\eta\beta_t}.
    \end{displaymath}
\end{lemma}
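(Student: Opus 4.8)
The plan is to run the standard mirror-descent telescoping argument, but with two features to track carefully: the mixture of proximal centers $\{x_i\}_{i\in\mc I}$ (plus the running center $u^{t-1}$), and the nonsmooth term $M\|\cdot\|$ appearing in the two-sided estimate on $\phi$. Fix $u\in U$ and an iteration $t\in\{1,\dots,T\}$. The first step is to apply Lemma~\ref{lm:generalized_lm5} to the subproblem defining $u^t$: I would take the linear (hence convex) function $q(u')=\langle v+\phi'(u^{t-1}),u'\rangle$ and enlarge the index set to $\mc I\cup\{\star\}$ with the new point $x_\star=u^{t-1}$ and weight $\eta_\star=\eta\beta_t$, so that the $\eta\beta_t D_w(\cdot,u^{t-1})$ term is absorbed into the $\sum_i\eta_i D_w(\cdot,x_i)$ structure. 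Using $\sum_{i\in\mc I}\eta_i=\eta$, this yields for every $u\in U$
\[
\langle v+\phi'(u^{t-1}),\,u^t-u\rangle \le \sum_{i\in\mc I}\eta_i\big(D_w(u,x_i)-D_w(u^t,x_i)\big)-\eta D_w(u,u^t)+\eta\beta_t\big(D_w(u,u^{t-1})-D_w(u^t,u^{t-1})-D_w(u,u^t)\big).
\]

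Next I would convert $\langle\phi'(u^{t-1}),u^t-u\rangle$ into a function-value gap by writing $u^t-u=(u^{t-1}-u)+(u^t-u^{t-1})$: the left inequality in the hypothesis on $\phi$ gives $\langle\phi'(u^{t-1}),u^{t-1}-u\rangle\ge\phi(u^{t-1})-\phi(u)+\tfrac{\mu}{2}\|u^{t-1}-u\|^2$, and the right inequality gives $\langle\phi'(u^{t-1}),u^t-u^{t-1}\rangle\ge\phi(u^t)-\phi(u^{t-1})-M\|u^t-u^{t-1}\|$. Adding these, the $\phi(u^{t-1})$ terms cancel; recombining with $\langle v,\cdot\rangle$ and the $\sum_i\eta_i D_w(\cdot,x_i)$ terms to assemble $\Phi(u^t)-\Phi(u)$ leaves $+\tfrac{\mu}{2}\|u^{t-1}-u\|^2$ on the left and $-\eta\beta_t D_w(u^t,u^{t-1})+M\|u^t-u^{t-1}\|$ on the right. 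The crucial move for the nonsmooth case is to absorb this error: Young's inequality $M\|u^t-u^{t-1}\|\le\frac{M^2}{2\eta\beta_t}+\frac{\eta\beta_t}{2}\|u^t-u^{t-1}\|^2$ together with the modulus-$1$ bound $D_w(u^t,u^{t-1})\ge\tfrac12\|u^t-u^{t-1}\|^2$ exactly kills the $-\eta\beta_t D_w(u^t,u^{t-1})$ term, and $D_w(u,u^{t-1})\le\tfrac{C}{2}\|u-u^{t-1}\|^2$ lets me trade $\tfrac{\mu}{2}\|u^{t-1}-u\|^2\ge\tfrac{\mu}{C}D_w(u,u^{t-1})$. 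This produces the clean per-step inequality
\[
\Phi(u^t)-\Phi(u)\le(\eta\beta_t-\mu/C)\,D_w(u,u^{t-1})-\eta(1+\beta_t)\,D_w(u,u^t)+\frac{M^2}{2\eta\beta_t}.
\]

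To conclude, I would multiply the per-step bound by $\lambda_t$ and sum over $t=1,\dots,T$. The net coefficient of $D_w(u,u^t)$ from the $t$-th and $(t+1)$-st summands is $\lambda_{t+1}(\eta\beta_{t+1}-\mu/C)-\lambda_t\eta(1+\beta_t)$, which is $\le0$ precisely by the stated hypothesis $\lambda_{t+1}(\eta\beta_{t+1}-\mu/C)\le\lambda_t(1+\beta_t)\eta$; since $D_w\ge0$, these intermediate terms may be discarded (they only enlarge the upper bound), leaving exactly $\lambda_1(\eta\beta_1-\mu/C)D_w(u,u^0)-\lambda_T\eta(1+\beta_T)D_w(u,u^T)+\sum_{t=1}^T\frac{M^2\lambda_t}{2\eta\beta_t}$ on the right. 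On the left, $\sum_t\lambda_t(\Phi(u^t)-\Phi(u))\ge(\sum_t\lambda_t)(\Phi(\widehat u^T)-\Phi(u))$ by convexity of $\Phi$ (a sum of a linear term, the convex $\phi$, and the convex divergences $D_w(\cdot,x_i)$) together with the definition of $\widehat u^T$ as the $\lambda_t$-weighted average of the $u^t$; combining the two sides gives the claim. The only real obstacle is bookkeeping: one must keep the negative $D_w(u^t,u^{t-1})$ term alive long enough to cancel the quadratic created by Young's inequality on the nonsmooth error $M\|u^t-u^{t-1}\|$ — with a smooth $\phi$ one would instead carry a $\tfrac{L}{2}\|u^t-u^{t-1}\|^2$ term and the required condition on $\beta_t$ would look different — and to verify that the telescoping sign condition is exactly the stated hypothesis.
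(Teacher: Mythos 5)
Your proposal is correct and follows essentially the same route as the paper: apply Lemma \ref{lm:generalized_lm5} to the prox subproblem (with $u^{t-1}$ absorbed as an extra center of weight $\eta\beta_t$), convert the linearization via the two-sided bound on $\phi$, absorb the $M\|u^t-u^{t-1}\|$ error with Young's inequality against $-\eta\beta_t D_w(u^t,u^{t-1})$, and telescope under the stated condition on $\{\lambda_t,\beta_t\}$. The paper states only the first step explicitly and defers the rest to the argument of Proposition 2 in \cite{lan_communication-efficient_2020}, which is exactly what you have reproduced.
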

\begin{proof}[Proof of Lemma \ref{lm:modified_prop2}]
    
Applying Lemma \ref{lm:generalized_lm5}, and using $\sum_{i\in \mc I}\eta_i = \eta$, we have
\begin{align*}
    &\quad \langle v+\phi'(u^{t-1}),u^{t}-u\rangle + \sum_{i\in \mc I}\eta_iD(u^t,x_i)  - \sum_{i\in \mc I}\eta_iD(u,x_i) \\
    &\leq \eta \beta_tD(u,u^{t-1}) -\eta \beta_tD(u^t,u^{t-1})-(1+\beta_t)\eta D(u,u^t)
\end{align*}

The rest follows a similar argument as in the proof of Proposition 2 \cite{lan_communication-efficient_2020}. 

\end{proof}

As a corollary to Lemma \ref{lm:modified_prop2}, we have the following performance guarantee.
\begin{corollary}\label{cor:lambda_beta}
Assume that $U\subset \R^{d_0}$ is a convex set, and $\phi:U\to \R$ is a convex function such that 
\begin{displaymath}
    \frac{\mu}{2}\|x-y\|^2\leq \phi(x) - \phi(y)-\langle \phi'(y),x-y)\rangle\leq M\|x-y\|,\quad\forall x,y\in U,
\end{displaymath}
where $\phi':U\to \R^{d_0}$ is a subgradient oracle, i.e. for each $y\in U$, $\phi'(y)\in \partial \phi(y)$ is a subgradient. With $\lambda_t = t+1$ and $\beta_t =\frac{t}{2}$ for $t\geq 1$, we have for any $u\in U$
\begin{align*}
     \langle v,\widehat{u}^T-u\rangle + \phi(\widehat{u}^T) - \phi(u) &\leq  \frac{2\eta}{T(T+3)}D(u,x^{init})  + \sum_{i\in \mc I} \eta_i D({u},x_i)\\
     & \quad- \frac{(T+1)(T+2)}{T(T+3)}\eta D(u,u^T) -\sum_{i\in \mc I} \eta_i D(\widehat{u}^T,x_i)+ \frac{4M^2}{\eta(T+3)}.
\end{align*}
Further, if $\mu >0$, and $D_{w^{x}}(x,x')\leq \frac{C}{2}\|x-x'\|^2$ for some $C<\infty$, then denoting $\eta = \sum_{i\in \mc I}\eta_i$, setting $\lambda_t = t$ and $\beta_t = \frac{(t+1)\mu}{2\eta C} +\frac{t-1}{2}$, we have for any $u\in U$,
\begin{align*}
    \langle v,\widehat{u}^T-u\rangle + \phi(\widehat{u}^T) - \phi(u)  &\leq  \sum_{i\in \mc I} \eta_i D({u},x_i) - \sum_{i\in \mc I} \eta_i D(\widehat{u}^T,x_i) \\
    &\quad - (\frac{\mu}{ C} +\eta)D(u,u^T) + \frac{2M^2/\eta}{T(T+1)}\sum_{t=1}^T \frac{\lambda_t}{\beta_t},
\end{align*}
and $\frac{2M^2/\eta}{T(T+1)}\sum_{t=1}^T \frac{\lambda_t}{\beta_t} \leq \frac{4CM^2}{\mu (T+1)}$. 
\end{corollary}

\section{Proof for Section \ref{sec:agents}}\label{sec:proof_setup}

\begin{proof}[Proof of Lemma \ref{lm:K_prop}]
    For $\overline{d}=1$, the matrix representation of $K_s\in \R^{|\child(s)|\times m}$ is $K_s = (I - \mb 1(\frac{|\Descendant(j)|}{|\Descendant(s)|})_{j\in \child(s)}^T)P_s$ where $P_s\in \R^{|\child(s)|\times m}$, and $P_s(i,j) = |\Descendant(i)|^{-1}$ if $j\in \Descendant(i)$ and $P_s(i,j)=0$ otherwise. Notice that
\begin{displaymath}
    K_sK_{s'}^* = (I - \mb 1(\frac{|\Descendant(j)|}{|\Descendant(s)|})_{j\in \child(s)}^T)P_sP_{s'}^T(I - \mb 1(\frac{|\Descendant(j)|}{|\Descendant(s')|})_{j\in \child(s')}^T)^T.
\end{displaymath}
If $s$ is not in the subtree rooted at $s'$ and $s'$ is not in the subtree rooted at $s$, then $\Descendant(s)\cap \Descendant(s') = \emptyset$, and so $P_sP_{s'}^T = \mb 0$. If $s$ is in the subtree rooted at $s'$, then $s$ is in the subtree rooted at some $\widehat{s}\in \child(s')$. In particular, $(P_sP_{s'}^T)(i,j) = 0$ for all $j\neq \widehat{s}$ and $(P_sP_{s'}^T)(i,\widehat{s}) = |\Descendant(\widehat{s})|^{-1}$, and thus $(I - \mb 1(\frac{|\Descendant(j)|}{|\Descendant(s)|})_{j\in \child(s)}^T)P_sP_{s'}^T = \mb 0$. Similarly for the case when $s'$ is in the subtree rooted at $s$. The case when $\overline{d}>1$ follows by applying the above argument coordinate-wise.

For the second claim, consider the case $\overline{d}=1$, denoting 
\begin{displaymath}
    D = \diag((|\Descendant(j)|)_{j\in \child(s)}),\quad v = (|\Descendant(j)|)_{j\in \child(s)}/\|(|\Descendant(j)|)_{j\in \child(s)}\|_2,
\end{displaymath}
where the norm in the denominator in the definition of $v$ is the $l_2$ norm. Then when $\overline{d}=1$, we have (applying Theorem 6 in \cite{Meyer1973Generalized})
\begin{equation}\label{eq:K_prod}
    K_sK_{s}^* = D^{-1} - \frac{1}{|\Descendant(s)|}\mb 1\mb 1^T,\quad (K_sK_{s}^*)^{\dagger} = D - vv^TD - Dvv^T + (v^TDv)vv^T.
\end{equation}
Thus, noticing that $v^TK_s = \mb 0$, we have 
\begin{displaymath}
    \Pi_s = K_s^*(K_sK_s^*)^{\dagger}K_s = K_s^TDK_s.
\end{displaymath}

Thus, for any $\widetilde{X},\widehat{X}\in \R^{m}$
    \begin{displaymath}
        \langle \widehat{X},\Pi_s\widetilde{X}\rangle=\langle \Pi_s\widehat{X},\Pi_s\widetilde{X}\rangle= (K_s\widehat{X})^TD(K_s\widetilde{X})= \sum_{i\in \child(s)} |\Descendant(i)| \cdot \langle (K_s\widehat{X})_i,(K_s\widetilde{X})_i\rangle.
    \end{displaymath}

The above argument can be applied coordinate-wise, and so extend to $\overline{d}\geq 1$.
\end{proof}

\end{document}